\documentclass[reqno,a4paper, 11pt]{amsart}

\usepackage[a4paper=true,pdfpagelabels]{hyperref}
\usepackage{graphicx}
\usepackage{latexsym}
\usepackage{mathabx}
\usepackage{amsmath}
\usepackage{amssymb}
\usepackage[T1]{fontenc}
\usepackage[utf8]{inputenc}
\usepackage{lmodern}
\usepackage{enumitem}
\usepackage{color}   
\newtheorem{theorem}{Theorem}[section]
\newtheorem{lemma}[theorem]{Lemma}

\newtheorem{proposition}[theorem]{Proposition}

\newtheorem{lettertheorem}{Theorem}
\newtheorem{letterlemma}[lettertheorem]{Lemma}

\theoremstyle{definition}

\theoremstyle{remark}

\numberwithin{equation}{section}

\newcommand{\set}[1]{\left\{#1\right\}}
\newcommand{\abs}[1]{\left | #1\right |}
\newcommand{\nm}[1]{\left \| #1 \right \|}

\newcommand{\B}{\mathcal{B}}
\newcommand{\Bb}{\mathcal{B}^{\b,\mu}}
\newcommand{\dist}{\operatorname{dist}}
\newcommand{\D}{\mathbb{D}}
\newcommand{\DD}{\widehat{\mathcal{D}}}
\newcommand{\Dd}{\widecheck{\mathcal{D}}}

\newcommand{\DDD}{\mathcal{D}}

\newcommand{\N}{\mathbb{N}}

\newcommand{\C}{\mathbb{C}}

\newcommand{\e}{\varepsilon}

\renewcommand{\phi}{\varphi}

\newcommand{\T}{\mathbb{T}}

\def\VMOA{\mathord{\rm VMOA}}
\def\BMOA{\mathord{\rm BMOA}}

\def\a{\alpha}       \def\b{\beta}        \def\g{\gamma}
           \def\e{\varepsilon}
     \def\om{\omega}      
       \def\t{\theta}       
                  \def\z{\zeta}

\def\omg{\widehat{\omega}}

\def\mug{\widehat{\mu}}

\renewcommand{\H}{\mathcal{H}}

\newenvironment{Prf}{\noindent{\emph{Proof of}}}
{\hfill$\Box$ }

\begin{document}
	\title[Essential norms of fractional Volterra-type operators]{Essential norms of fractional Volterra-type operators}

\author[Á. M. Moreno]{\'Alvaro Miguel Moreno}
\address{Departamento de An\'alisis Matem\'atico, Universidad de M\'alaga, Campus de Teatinos, 
29071 M\'alaga, Spain}
\email{alvarommorenolopez@uma.es}
\thanks{The author is supported in part by Ministerio de Ciencia e Innovaci\'on, Spain, project PID2022-136619NB-I00; La Junta de Andaluc{\'i}a, project FQM210.}
\subjclass{26A33, 30H10, 30H35, 47G10, 47B07}
\keywords{Radial doubling weight, fractional derivative, Volterra-type operator, Essential norm, Hardy space, Bergman space}

\begin{abstract}
Given a radial doubling weight $\mu$ on the unit disk $\D$ and $f(z)=\sum_{n=0}^\infty \widehat{f}(n)z^n\in\H(\D)$, the fractional derivative of $f$ induced by $\mu$ is
    $$
    D^\mu(f)(z) = \sum_{n=0}^\infty \frac{\widehat{f}(n)}{\mu_{2n+1}}z^n,
    $$
where $\mu_{2n+1}=\int_0^1 r^{2n+1}\mu(r)\,dr$ are the odd moments of $\mu$. The fractional integral of $f$ induced by $\mu$ is $I^\mu(f)(z)=\sum_{n=0}^\infty \mu_{2n+1}\widehat{f}(n)z^n$. We study the fractional Volterra-type operator
  $$
  V_{\mu,g}(f)(z)=I^\mu(f\cdot D^\mu(g))(z),\quad f\in\H(\D),
  $$
with symbol $g\in\H(\D)$, introduced in \cite{BelMorNikPel}. First, we estimate the essential norm of $V_{\mu,g}$ acting on $H^p$ and $A^p_\a$. Later, we study the same questions for $V_{\mu,g}:H^p\to H^q$ and $V_{\mu,g}:A^p_\a\to A^q_\a$, characterizing their boundedness and compactness and estimating their essential norms for all $0<p,q<\infty$.
\end{abstract} 
\maketitle

\section{Introduction}
Let $\H(\D)$ denote the space of analytic functions in the unit disk $\D=\{z\in\C:|z|<1\}$. For a fixed function $g\in\H(\D)$ we consider the integral operator given by $$V_g(f)(z)=\int_0^z f(\z)g'(\z)\,d\z, \quad f\in \H(\D).$$

The Volterra-type operator $V_g$ has played a fundamental role in operator theory on spaces of analytic functions over the last decades. It was introduced as a proper operator by Pommerenke in the 70's \cite{Pommerenke}, and has attracted considerable attention since the seminal works of Aleman, Cima and Siskakis \cite{AlemaCima,AlemanSiskakis1,AlemanSiskakis2}. In recent years several authors showed how the study of this operator is connected with
many areas of research, including the theory of Muckenhoupt and Bekoll\'e-Bonami weights \cite{AlemanConstantin,AlemanPelaez}, H\"ormander maximal functions and Carleson measures \cite[Chapters 2 and 4]{PelRatMemoirs}, and the strong continuity of semigroups of composition operators \cite{ChalMor}. More recently, different generalizations of $V_g$ defined in terms of higher-order derivatives of the symbol $g$ have been considered \cite{ChalmoukisVolterra,DuLiQu}. In this paper, we focus on the generalization introduced in \cite{BelMorNikPel} in terms of the fractional derivative induced by a radial weight.

The extension $\mu(z)=\mu(|z|)$,\, $z\in\D$, of a non-negative function $\mu\in L^1([0,1))$  satisfying $\mug(r)=\int_{r}^1 \mu(s)\,ds>0$ for every $r\in[0,1)$, is called a radial weight. The fractional derivative of  a function $ f(z)=\sum_{n=0}^{\infty}\widehat{f}(n)z^n \in \H(\D)$  induced by a radial weight $\mu$ is given by
	$$
	D^\mu(f)(z)=\sum_{n=0}^{\infty} \frac{\widehat{f}(n)}{\mu_{2n+1}}z^n, 
	$$
where $\mu_{x}=\int_0^1 r^{x}\mu(r)\,dr$, $x\ge 0$, are the moments of $\mu$.

The fractional derivative $D^\mu$ was introduced in \cite{PelRosa} and in particular, for  $\b>0$, taking the standard weight $\mu(z)=\beta(1-|z|^2)^{\beta-1}$ recovers the fractional derivative of order $\b$ introduced by Hardy and Littlewood in \cite[p. 409]{HardyLittlewood}. We also consider the fractional integral operator $I^\mu(f)(z)=\sum_{n=0}^{\infty} \mu_{2n+1}\widehat{f}(n)z^n.$ 

Finally, the fractional Volterra type operator induced by $\mu$, with symbol $g\in\H(\D)$, is defined by
	$$
	V_{\mu,g}(f)(z)= I^\mu(f\cdot D^\mu(g))(z),\quad f\in\H(\D).
	$$
As we mentioned above, $V_{\mu,g}$ was introduced in \cite{BelMorNikPel}, where the authors studied several properties of this operator on Hardy and Bergman spaces under doubling conditions on the weight.

For $0<p\leq \infty$, the Hardy space $H^p$ consists of $f\in\H(\D)$ for which
$
    \|f\|_{H^p}=\sup_{0<r<1}M_p(r,f)<\infty$,
where
    $$
    M_p(r,f)=\left (\frac{1}{2\pi}\int_0^{2\pi}
    |f(re^{i\theta})|^p\,d\theta\right )^{\frac{1}{p}},\quad 0<p<\infty,
    $$
and 
    $$
    M_\infty(r,f)=\max_{0\le\theta\le2\pi}|f(re^{i\theta})|.
    $$ 
    
Let us recall that for $0<p<\infty$ and  a radial weight $\omega$, the Bergman  space $A^p_\om$ consists of functions $f\in \H(\D)$  such that
    $$
    \|f\|_{A^p_\omega}^p=\int_\D|f(z)|^p\omega(z)\,dA(z)<\infty,
    $$
where  $dA(z)=\frac{dx\,dy}{\pi}$ is the normalized Lebesgue area measure on $\D$. For $\a>-1$ we denote by $A^p_\a$ the standard weighted Bergman space induced by $\om_\a(z)=(\a+1)(1-|z|^2)^\a$.

In many of our results the assumption that $\mu$ is a radial doubling weight is fundamental. 
A radial weight $\mu$ belongs to $\DD$ if there exists $C=C(\mu)>0$ such that 
	$$
	\mug(r)\le C\mug\left(\frac{1+r}{2}\right),\quad 0\le r<1,
	$$
and $\mu\in\Dd$ if
	$$
	\mug(r)\ge C\mug\left(1-\frac{1-r}{K}\right),\quad 0\le r<1,
	$$ 
for some $K=K(\mu)>1$ and $C=C(\mu)>1$.
Write $\mathcal{D}=\DD\cap\Dd$, and simply say that $\mu$ is a radial doubling weight if $\mu\in\DDD$. The class $\DDD$ arises naturally in many questions in operator theory. For instance, the Bergman projection $P_\mu$ acts as a bounded and onto operator from $L^\infty$ to the Bloch space $\mathcal{B}=\{ f \in \H(\D): \|f\|_{\B}=|f(0)|+ \sup_{z\in \D}(1-|z|^2)|f'(z)|<\infty\}$ if and only if $\mu\in\DDD$. Similarly, the Littlewood-Paley formula
	\begin{equation}\label{eq: L-P estimate}
	 \int_\D|f(z)|^p \mu(z)\,dA(z)\asymp \int_\D|f'(z)|^p(1-|z|)^p\mu(z)\,dA(z)+|f(0)|^p,\quad f\in \H(\D),
	\end{equation}
holds if and only if $\mu\in\DDD$ \cite{PelaezRattyaAdvances}.  

The starting point for this paper is the following result from \cite{BelMorNikPel}. Recall that $\BMOA$ consists of $f\in\H(\D)$ such that
$$
\nm{f}_{\BMOA}^2= \abs{f(0)}^2 + \sup_{a\in\D} \frac{\int_{S(a)}\abs{f'(z)}^2 (1-|z|^2)\,dA(z)}{1-\abs{a}}<\infty.
$$
where $S(a)=\set{z\in\D: \abs{\arg{a}-\arg{z}}\leq \frac{1-\abs{a}}{2}, 1-\abs{z}\leq 1-\abs{a}}$ for $a\in \D\setminus\{0\}$, and  $S(0)=\D$, is the Carleson square induced by $a$. The spaces $\VMOA$ and $\B_0$ are the closures of the polynomials in $\BMOA$ and $\B$, respectively.
\begin{lettertheorem}\label{letterth: bella}
Let $0<p<\infty$, $\a>-1$, $\mu\in\DDD$ and $g\in \H(\D)$. Then
\begin{itemize}
    \item[(i)] $V_{\mu,g}$ is bounded on $H^p$ if and only if $g\in\BMOA$. Moreover, 
    $$
    \nm{V_{\mu,g}}_{H^p\to H^p} \asymp \nm{g}_{\BMOA};
    $$
    \item[(ii)] $V_{\mu,g}$ is compact on $H^p$ if and only if $g\in\VMOA$;
    \item[(iii)] $V_{\mu,g}$ is bounded on $A^p_\a$ if and only if $g\in\B$. Moreover, 
    $$
    \nm{V_{\mu,g}}_{A^p_\a\to A^p_\a} \asymp \nm{g}_{\B};
    $$
    \item[(iv)] $V_{\mu,g}$ is compact on $A^p_\a$ if and only if $g\in\B_0$.
\end{itemize}
\end{lettertheorem}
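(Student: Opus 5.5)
The plan is to reduce $V_{\mu,g}$ to the classical Volterra operator $V_g$, which Aleman--Siskakis and Aleman--Cima treated on $H^p$ and $A^p_\a$. The first step is the coefficient identity. For $f\in\H(\D)$ we have $\widehat{V_{\mu,g}f}(n)=\mu_{2n+1}\sum_{k=0}^n\widehat f(n-k)\widehat g(k)/\mu_{2k+1}$, while $\widehat{V_gf}(n)=\frac1n\sum_{k=1}^{n}k\widehat g(k)\widehat f(n-k)$ for $n\ge1$. I would study the operator-valued kernel $(n,k)\mapsto \mu_{2n+1}/\mu_{2k+1}$ for $k\le n$. For $\mu\in\DDD$ the moments satisfy two-sided power bounds, $(\frac{n+1}{k+1})^{-\gamma}\lesssim \mu_{2n+1}/\mu_{2k+1}\lesssim(\frac{n+1}{k+1})^{-\delta}$ for some $0<\delta\le\gamma$. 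These come from the standard descriptions of $\DD$ and $\Dd$ via $\mug$ and the estimate $\mu_x\asymp\mug(1-1/x)$.

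The cleanest route is to write $V_{\mu,g}$ in integral form. I would use the reproducing formula for $A^2_\mu$, which gives $I^\mu h(z)=\int_\D h(\z)\,\overline{B^\mu_z(\z)}\ldots$; more concretely, $\langle V_{\mu,g}f,h\rangle_{H^2}=\langle f\,D^\mu g,\,I^\mu h\rangle_{H^2}$. Since $\langle u,I^\mu h\rangle_{H^2}=\langle u,h\rangle_{A^2_\mu}$ after adjusting the factor $2$, this gives
$$\langle V_{\mu,g}f,h\rangle_{H^2}\asymp\int_\D f(z)\,D^\mu g(z)\,\overline{h(z)}\,\mu(z)\,dA(z).$$
Boundedness is then a Carleson-measure and duality problem. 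For $H^p$ with $p>1$, we have $(H^p)^*=H^{p'}$, and with the Littlewood--Paley type estimate \eqref{eq: L-P estimate} adapted to $\mu$ this becomes the condition $|D^\mu g|\,\mu\,dA\cdot(\text{derivative of } fh)$-type bounds. The key fact needed is that $|D^\mu g(z)|\mug(|z|)\,dA$-Carleson behaviour is equivalent to $|g'(z)|(1-|z|)dA$-behaviour. This holds because, for $\mu\in\DDD$, $\|g\|_{\BMOA}\asymp$ the BMOA norm computed with $D^\mu g$ and weight $\mug(|z|)^2/(1-|z|)$, and $\|g\|_\B\asymp\sup|D^\mu g(z)|\mug(|z|)$. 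I would prove these equivalences from the moment bounds above by comparing Hadamard multipliers on dyadic blocks $\Delta_j g$, using the smooth-polynomial multiplier lemma on $H^p$ and $A^p_\a$.

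Given these equivalences, the proof of (i) and (iii) splits into two parts. For the upper bound, write $V_{\mu,g}=I^\mu M_{D^\mu g}$ and check norms blockwise. On $A^p_\a$, a Littlewood--Paley computation gives $\|V_{\mu,g}f\|_{A^p_\a}\asymp\|(V_{\mu,g}f)'(1-|z|)\|$, and $D^\mu$ and $I^\mu$ act like $(1-|z|)$-weighted differentiation and integration of order given by $\mug$. Thus $\|I^\mu(fD^\mu g)\|_{A^p_\a}\lesssim\|f\,D^\mu g\,\mug(|z|)\|_{L^p_\a}\le\|g\|_\B\|f\|_{A^p_\a}$. On $H^p$, the analogous step uses area-function characterisations and the Carleson embedding.

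For the lower bounds I would test on normalized kernels $f_a$, for instance $f_a(z)=(1-|a|^2)^{s}/(1-\bar az)^{s+1/p}$ for $H^p$, and pair with suitable $h_a$. The quantity $\langle V_{\mu,g}f_a,h_a\rangle$ then reproduces a local average of $D^\mu g$ near $a$, which yields $\|g\|_{\B}$, respectively the Carleson-box quantity for $\BMOA$. For the case $p\le1$, where duality fails, I would use factorization $f=f_1f_2$ in $H^p$ (and the Horowitz-type factorization in $A^p_\a$) to reduce to $p=2$, as is standard for $V_g$.

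For compactness, (ii) and (iv), the argument is the standard little-oh version. If $g\in\VMOA$ (respectively $\B_0$), approximate $g$ by polynomials $g_N$. Then $V_{\mu,g_N}$ is compact, since $D^\mu g_N$ is a polynomial and $I^\mu$ maps bounded sets into normal families continuously on these spaces. The norm estimate gives $\|V_{\mu,g}-V_{\mu,g_N}\|\lesssim\|g-g_N\|\to0$. Conversely, $f_a\to0$ weakly (uniformly on compacts and bounded) as $|a|\to1$, so compactness forces $\|V_{\mu,g}f_a\|\to0$, and the test estimates above give the vanishing conditions.

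The main obstacle is the equivalence between the $D^\mu$-version and the $g'$-version of the $\BMOA$ and Bloch norms, together with the Littlewood--Paley identity for $I^\mu$. Both rest on the two-sided moment estimates valid exactly for $\mu\in\DDD$, and on a multiplier lemma controlling $\sum_n \lambda_n\widehat{f}(n)z^n$ on dyadic blocks.
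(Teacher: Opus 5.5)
There is a genuine gap. It sits in the Hardy-space half (i)--(ii), which carries most of the difficulty, and one of your steps there fails outright.

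\textbf{What the theorem actually rests on.} The paper quotes this result from \cite{BelMorNikPel}, and the mechanism is visible in Lemma~\ref{Lemma: Vmu to embed}. Since $D^\mu(V_{\mu,g}f)=f\,D^\mu(g)$, two Littlewood--Paley formulas convert $V_{\mu,g}$ into the embeddings $A^p_\a\to L^p(\mu_{g,p,\a})$ and $H^p\to T^p_2(\mu_g)$. These are $\nm{F}_{A^p_\a}^p\asymp\int_\D\abs{D^\mu(F)}^p\mug^p(1-|z|)^\a\,dA$ \cite[Theorem 1]{PelRosa} and $\nm{F}_{H^p}^p\asymp\int_\T\bigl(\int_{\Gamma(\z)}\abs{D^\mu(F)(z)}^2\frac{\mug(z)^2}{(1-|z|)^2}\,dA(z)\bigr)^{p/2}\abs{d\z}$ \cite[Theorem 1.1]{BelMorNikPel}, valid for $\mu\in\DDD$ and every $0<p<\infty$. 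The Carleson characterizations of those embeddings produce $\sup_z\abs{D^\mu(g)(z)}\mug(z)$ and $\nm{g}_{\BMOA_\mu}$, together with their vanishing versions. The identifications $\B^{0,\mu}=\B$ \cite{MorPelRosa} and $\BMOA_\mu=\BMOA$ \cite[Theorem 1.2]{BelMorNikPel} then conclude.

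\textbf{Where your tools fall short.} You use the Bergman formula only heuristically, and you never formulate the Hardy one precisely, i.e.\ an area-function formula for $D^\mu$ valid for all $0<p<\infty$. For these formulas and for the symbol identifications you propose a scalar multiplier lemma on dyadic blocks. That may plausibly handle the mixed-norm quantities on the Bergman side and $\B^{0,\mu}=\B$. It cannot handle the Hardy side: $H^p$ for $p\neq 2$ and $\BMOA$ are not determined by the norms of the dyadic blocks, so neither the area-function formula nor $\BMOA_\mu=\BMOA$ follows from blockwise estimates. Likewise, factorization $f=f_1f_2$ cannot reduce $p\le 1$ to $p=2$ here, because $I^\mu$ has no product structure. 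The expression $V_{\mu,g}(f_1f_2)=I^\mu(f_1f_2\,D^\mu(g))$ cannot be written in terms of $V_{\mu,g}$ acting on the factors. It is the area-function formula that treats all $0<p<\infty$ at once.

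\textbf{The step that fails: your lower bound on $H^p$.} For $f_a,h_a$ chosen independently of $g$, the pairing $\langle V_{\mu,g}f_a,h_a\rangle_{H^2}=\tfrac12\int_\D f_a\,D^\mu(g)\,\overline{h_a}\,\mu\,dA$ is linear in $g$. A pairing that reproduces a local average of $D^\mu(g)$ near $a$ therefore gives at most $\abs{D^\mu(g)(a)}\mug(a)\lesssim\nm{V_{\mu,g}}$. That is the Bloch condition, which is strictly weaker than $g\in\BMOA$. The Carleson-box quantity $\frac{1}{1-|a|}\int_{S(a)}\abs{D^\mu(g)}^2\frac{\mug^2}{1-|z|}\,dA$ is quadratic in $g$, so it has to be extracted from the size of $V_{\mu,g}f_a$ itself:
\begin{itemize}
\item for $p=2$, from $\nm{V_{\mu,g}f}_{H^2}^2\asymp\int_\D\abs{f}^2\abs{D^\mu(g)}^2\frac{\mug^2}{1-|z|}\,dA$ with $f=f_a$, as in the proof of Proposition~\ref{propo: norm essential 2};
\item for general $p$, from the area-function formula together with the necessity part of Theorem~\ref{thletter: emebedding hardy tent} (a) with $q=p$.
\end{itemize}
Your necessity argument for compactness in (ii) relies on the same test estimates, so it inherits this gap.

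\textbf{What does work.} On $A^p_\a$ no pairing or duality is needed: once the Bergman formula is available, subharmonicity gives $\nm{V_{\mu,g}f_a}_{A^p_\a}\gtrsim\abs{D^\mu(g)(a)}\mug(a)$ for every $0<p<\infty$. Your polynomial-approximation argument for sufficiency of $\B_0$ and $\VMOA$ is also fine once the formulas and norm equivalences are in place. Note, though, that compactness of $V_{\mu,P}$ for a polynomial $P$ comes from those formulas, as a vanishing Carleson condition. It does not follow from normal-family considerations.
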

This result shows that $V_{\mu,g}$ and $V_g$ are simultaneously bounded (respectively, compact) on $H^p$ or $A^p_\a$ whenever $\mu\in\DDD$. Our first objective is twofold: first to obtain a quantitative version of Theorem~\ref{letterth: bella} (ii) and (iv) in terms of the essential norm, and second to compare the essential norms of $V_{\mu,g}$ and $V_g$.

Recall that, if $T:X\to Y$ is a bounded linear operator between quasi-normed spaces, the essential norm is its distance to the closed subspace of compact operators from $X$ to $Y$:
$$
\nm{T}_{e,X\to Y} = \inf\set{\nm{T-S}_{X\to Y}: S \text{ is compact}}.
$$
The essential norm of $V_g$ on various spaces of analytic functions has been extensively studied in recent years \cite{EssHardy,EssBloch,EssBergman}. In many of these results, the essential norm is compared with the distance of the symbol to the subspace of symbols inducing compactness of $V_g$.

To simplify the notation, for a quasi-normed space $X\subset \H(\D)$ and a linear subspace $X_0\subset X$, we write 
$$\dist(f,X_0) = \inf\{\nm{f-g}_X:g\in X_0\},\quad f\in X$$
for the distance to $X_0$ measured in the quasi-norm of $X$, whenever the space $X$ is clear from the context.

Our first main result is the following.
\begin{theorem}\label{th: norma esencial p=q}
Let $0<p<\infty$, $\a>-1$, $\mu\in\DDD$ and $g\in\H(\D)$. Then 
\begin{itemize}
    \item[(i)] If $V_{\mu,g}$ is bounded on $H^p$, i.e., $g\in \BMOA$, then
        $$
         \nm{V_{\mu,g}}_{e,H^p\to H^p} \asymp \dist(g,\VMOA) \asymp \limsup_{|a|\to 1^-}\left ( \frac{\int_{S(a)}\abs{g'(z)}^2 (1-|z|)dA(z)}{1-|a|}\right )^{\frac{1}{2}}.
        $$
    \item[(ii)] If $V_{\mu,g}:A^p_\a\to A^p_\a$ is bounded, i.e., $g\in \B$, then
        $$
         \nm{V_{\mu,g}}_{e,A^p_\a\to A^p_\a} \asymp \dist(g,\B_0)\asymp \limsup_{|a|\to 1^-}\abs{g'(a)}(1-|a|).
        $$    
\end{itemize}
\end{theorem}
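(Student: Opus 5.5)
The plan is to prove each part as two inequalities: an upper bound $\nm{V_{\mu,g}}_{e}\lesssim \dist(g,X_0)$ and a lower bound $\nm{V_{\mu,g}}_{e}\gtrsim \limsup$. The remaining relation, $\dist(g,X_0)\asymp\limsup$, is classical. For $\VMOA$ it is the Garnett-type distance formula for BMOA, expressed through Carleson boxes; for $\B_0$ it is the well-known formula $\dist(g,\B_0)\asymp\limsup_{|a|\to1}|g'(a)|(1-|a|)$. I would quote both from the literature.

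\emph{Upper bound.} Linearity in the symbol gives $V_{\mu,g}=V_{\mu,g-h}+V_{\mu,h}$. If $h\in\VMOA$ (resp.\ $\B_0$), then $V_{\mu,h}$ is compact by Theorem~\ref{letterth: bella}(ii) (resp.\ (iv)). Hence
$$\nm{V_{\mu,g}}_{e}\le \nm{V_{\mu,g-h}}\lesssim \nm{g-h}_{\BMOA}$$
(resp.\ $\lesssim\nm{g-h}_{\B}$) by the norm estimates (i), (iii) of Theorem~\ref{letterth: bella}. Taking the infimum over $h$ gives the upper bound. For $p<1$ the space is only quasi-normed, so the triangle inequality for operator quasi-norms costs a constant. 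This is harmless, since we only claim comparability.

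\emph{Lower bound.} I would use test functions that tend to $0$ uniformly on compact sets and are bounded in the space. On $H^p$ take $f_a(z)=\left(\frac{1-|a|^2}{(1-\bar a z)^2}\right)^{1/p}$; on $A^p_\a$ take $f_a(z)=\frac{(1-|a|^2)^{(2+\a)/p}}{(1-\bar az)^{2(2+\a)/p}}$. Given a compact $K$, these functions are normalized and converge to $0$ locally uniformly as $|a|\to1$. A standard argument (for $p>1$ via weak convergence to zero; for $p\le1$ via a uniform approximation of $K$ by finite-rank operators of the form $K_r f=K(f_r)$, or directly using that compactness sends bounded locally-null sequences to norm-null ones) gives $\limsup_{|a|\to1}\nm{Kf_a}=0$. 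Hence
$$\nm{V_{\mu,g}-K}\gtrsim\limsup_{|a|\to1}\nm{V_{\mu,g}f_a}.$$
It then remains to bound $\nm{V_{\mu,g}f_a}$ from below.

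For this I would use the fractional Littlewood--Paley machinery from \cite{BelMorNikPel}. For $\mu\in\DDD$ one has $\nm{I^\mu F}_{H^p}\asymp$ the $H^p$-norm of a suitable fractional derivative of $F$, so that $\nm{V_{\mu,g}f}_{H^p}$ is comparable to a quantity like $\nm{ f\, D^\mu g\,\widehat{\mu}(|z|)}$ in the tent/area norm. More concretely, I would aim at the estimate
$$\nm{V_{\mu,g}f}_{H^p}^p\gtrsim \int_\T\Big(\int_{\Gamma(\zeta)}|f(z)|^2|D^\mu g(z)|^2\mu\text{-weight}\,dA\Big)^{p/2}|d\zeta|.$$
Restricting to $z\in S(a)$, where $|f_a|^p\asymp(1-|a|)^{-1}$, and using the equivalence $\int_{S(a)}|D^\mu g|^2\widehat\mu^2(1-|z|)\,dA\asymp\int_{S(a)}|g'|^2(1-|z|)\,dA$ up to tail terms that are controlled uniformly, one gets $\nm{V_{\mu,g}f_a}\gtrsim(\int_{S(a)}|g'|^2(1-|z|)dA/(1-|a|))^{1/2}$ minus errors vanishing as $|a|\to1$. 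In the Bergman case the analogous pointwise estimate is $\nm{V_{\mu,g}f_a}_{A^p_\a}\gtrsim |D^\mu g(a)|\widehat\mu(|a|)\asymp|g'(a)|(1-|a|)$, again up to controlled terms. This uses a subharmonicity estimate for the fractional derivative of $V_{\mu,g}f_a$ on a pseudo-hyperbolic disc around $a$, together with the fact that $D^\mu(V_{\mu,g}f)$ equals $D^\mu I^\mu(f D^\mu g)=fD^\mu g$ up to a normalization.

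\emph{Main obstacle.} The delicate step is this last lower bound. It requires replacing $g'$ by $D^\mu g$ locally, i.e.\ showing that the "Carleson box" quantities of $D^\mu g$ weighted by $\widehat\mu$ are comparable to those of $g'$ in the $\limsup$ sense, not just globally. I would try to use the integral representation of $D^\mu$ from \cite{PelRosa} through the kernel $\sum z^n/\mu_{2n+1}$, whose size is $\asymp 1/(\widehat\mu(|z|)(1-|z|))$ by doubling. I would then split $g=g_r+(g-g_r)$ with $g_r$ a dilation: the part $g_r$ contributes compact terms, which do not change the $\limsup$, so only the "tail" matters.
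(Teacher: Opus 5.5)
Your upper bound is essentially the paper's argument. Your Bergman test-function estimate $\nm{V_{\mu,g}(f_a)}_{A^p_\a}\gtrsim\abs{D^\mu(g)(a)}\mug(a)$ (via the Littlewood--Paley formula for $D^\mu$, the identity $D^\mu I^\mu=\mathrm{Id}$, and subharmonicity) is also what Proposition~\ref{propo: norm essential 1} does. The genuine gap is the step you flag as the main obstacle: passing from $D^\mu g$ to $g'$. As you state it, this step is false. $D^\mu$ is not a local operator, so neither $\abs{D^\mu(g)(a)}\mug(a)\asymp\abs{g'(a)}(1-|a|)$ nor a box-by-box comparison holds; for instance, $g'$ and $D^\mu g$ generally have different zeros. (The correct box weight is also $\mug(z)^2/(1-|z|)$, not $\mug(z)^2(1-|z|)$.) The kernel-estimate route you suggest is not carried out.

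The missing idea is that no local comparison is needed.
\begin{itemize}
\item Keep the lower bound entirely in terms of $D^\mu g$: $\nm{V_{\mu,g}}_{e}\gtrsim\limsup_{|a|\to1^-}\abs{D^\mu(g)(a)}\mug(a)$, respectively $\gtrsim$ the $\limsup$ of the $\BMOA_\mu$ box ratios.
\item Show by dilations that these $\limsup$'s are comparable to $\dist(g,\B^{0,\mu}_0)$ and $\dist(g,\VMOA_\mu)$, measured in the $\mu$-norms. This is Lemmas~\ref{lemma: distancia Bb} and~\ref{lemma: distancia BMOAmu}, and it is where your splitting $g=g_r+(g-g_r)$ belongs, together with $D^\mu(g_r)=(D^\mu g)_r$ and a Poisson-kernel bound on the boxes.
\item Use that $\B^{0,\mu}=\B$ and $\B^{0,\mu}_0=\B_0$ (Theorem~\ref{th: caract Bmu} with $\b=0$), and that $\BMOA_\mu=\BMOA$ and $\VMOA_\mu=\VMOA$ with equivalent norms. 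The latter follows by combining Theorem~\ref{letterth: bella}(i),(ii) with Lemma~\ref{Lemma: Vmu to embed}(i) and Theorem~\ref{thletter: emebedding hardy tent}(a),(b).
\end{itemize}
A distance to the same subspace changes only by constants under an equivalent norm, so the classical distance formulas close the chain.

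Your restriction argument in the Hardy case works for $p\ge2$, by Jensen's inequality on the arc under $S(a)$. It fails for $p<2$: Jensen goes the wrong way, and the bound $\nm{V_{\mu,g}(f_a)}_{H^p}\gtrsim\bigl(\int_{S(a)}\abs{D^\mu(g)(z)}^2\frac{\mug(z)^2}{1-|z|}dA(z)/(1-|a|)\bigr)^{1/2}$ is genuinely false uniformly in $a$. If the box mass sits in a sub-box $S(b)$ with $1-|b|=\delta(1-|a|)$, the left side is of order $\delta^{1/p}$ and the right side of order $\delta^{1/2}$. The paper avoids this as follows.
\begin{itemize}
\item It works in $H^2$, where the tent norm becomes the area integral $\int_\D\abs{f}^2\abs{D^\mu(g)}^2\frac{\mug^2}{1-|z|}dA$.
\item It then uses $\nm{F}_{H^2}^2\le\nm{F}_{H^p}^{p/2}\nm{F}_{H^{4-p}}^{(4-p)/2}$ together with $\nm{V_{\mu,g}(f_{a,p})}_{H^{4-p}}\lesssim\nm{g}_{\BMOA_\mu}\nm{f_{a,p}}_{H^{4-p}}$.
\item The resulting inequality is not homogeneous in $g$. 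It is made homogeneous by applying it to $g-h$ with $h\in\VMOA_\mu$, for which $\limsup\nm{V_{\mu,h}(f_{a,p})}_{H^p}=0$, taking the infimum over $h$ to replace $\nm{g}_{\BMOA_\mu}$ by $\dist(g,\VMOA_\mu)$, and dividing via Lemma~\ref{lemma: distancia BMOAmu}.
\end{itemize}

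Finally, your justification of $\limsup\nm{K(f_a)}=0$ for compact $K$ is wrong when $p\le1$. On $H^1$, take the rank-one operator $Kf=\langle f,b\rangle\cdot1$ (Fefferman pairing) with $b(z)=\log\frac{1}{1-z}$. Then $\langle f_t,b\rangle\to2$ for $f_t(z)=\frac{1-t^2}{(1-tz)^2}$ as $t\to1^-$. The inequality you actually need, $\nm{V_{\mu,g}-K}\gtrsim\limsup\nm{V_{\mu,g}(f_a)}$, is nevertheless true, by the subsequence-plus-Fatou argument of Lemma~\ref{lemma: compact trick}.
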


The proof is presented in Section~\ref{Sec5} and has two main ingredients: first, a deep understanding of the space of symbols and equivalent formulas for the distance to the space of symbols, and second, the study of suitable families of test functions $f_a$ that reduce the lower estimates for the essential norm to estimates of $\limsup_{|a|\to 1^-} \nm{V_{\mu,g}(f_{a})}_{A^q_\a}$.

We highlight that this result together with \cite{EssHardy,EssBergman} shows that the essential norms of $V_{\mu,g}$ and $V_g$ acting on $H^p$ (respectively, $A^p_\a$) are comparable.

We next extend the study to any pair of indices $0<p,q<\infty$. We first characterize the symbols for which $V_{\mu,g}:H^p\to H^q$ or $V_{\mu,g}:A^p_\a\to A^q_\a$ is bounded or compact, and then estimate the essential norm in each case. To simplify the notation, we write $A^p_{-1}=H^p$ throughout the rest of the paper.

The first result covers the range $q<p$.
\begin{theorem}\label{th: caract Vmu q<p}
Let $0<q<p<\infty$, $\a\geq-1$, $\mu\in\DDD$ and $g\in\H(\D)$. Then the following conditions are equivalent:
\begin{itemize}
    \item[(i)] $V_{\mu,g}:A^p_\a\to A^q_\a$ is bounded;
    \item[(ii)] $V_{\mu,g}:A^p_\a\to A^q_\a$ is compact;
    \item[(iii)] $g\in A^s_\a$, where $\frac{1}{s}=\frac{1}{q}-\frac{1}{p}$.
\end{itemize}         
Moreover, $\nm{V_{\mu,g}}_{A^p_\a\to A^q_\a} \asymp \nm{g}_{A^s_\a}.$

\end{theorem}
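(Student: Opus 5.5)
The plan is to prove (iii)$\Rightarrow$(ii), note that (ii)$\Rightarrow$(i) is trivial, and prove (i)$\Rightarrow$(iii) with the norm estimate.

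\emph{Sufficiency.} The key tool is a Littlewood--Paley-type description of $A^q_\a$ (with $A^q_{-1}=H^q$) in terms of the fractional derivative $D^\mu$. For $\mu\in\DDD$ one has $D^\mu I^\mu=\mathrm{Id}$. Moreover, for $\a>-1$,
$$
\nm{h}_{A^q_\a}^q\asymp \int_\D |D^\mu h(z)|^q\,\mug(z)^q(1-|z|)^{\a}\,dA(z)+|h(0)|^q,
$$
and for $\a=-1$ the analogous area-function (Lusin) version holds:
$$
\nm{h}_{H^q}^q\asymp\int_\T\Big(\int_{\Gamma(\z)}|D^\mu h(z)|^2\mug(z)^2\frac{dA(z)}{(1-|z|)^2}\Big)^{q/2}|d\z|.
$$
I expect these to be available from \cite{BelMorNikPel} or \cite{PelRosa}; they follow from $\mu_{2n+1}\asymp\mug(1-1/n)$ and multiplier arguments.

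Applying this to $h=V_{\mu,g}f$ gives $D^\mu h=f\,D^\mu g$. In the Bergman case,
$$
\nm{V_{\mu,g}f}_{A^q_\a}^q\lesssim\int_\D|f|^q\,|D^\mu g|^q\mug^q(1-|z|)^\a\,dA.
$$
Hölder's inequality with exponents $p/q$ and $s/q$ then bounds this by
$$
\nm{f}_{A^p_\a}^q\Big(\int|D^\mu g|^s\mug^s(1-|z|)^\a\,dA\Big)^{q/s}\asymp\nm{f}^q_{A^p_\a}\nm{g}^q_{A^s_\a},
$$
using the same equivalence for $g$ with exponent $s$. In the Hardy case I would use the pointwise bound $|f|\le N(f)(\z)$ (nontangential maximal function) on $\Gamma(\z)$. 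Then Hölder on $\T$ together with $\nm{N f}_{L^p}\lesssim\nm{f}_{H^p}$ gives $\nm{V_{\mu,g}f}_{H^q}\lesssim\nm{f}_{H^p}\nm{g}_{H^s}$.

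\emph{Compactness.} Polynomials are dense in $A^s_\a$, so choose polynomials $g_n\to g$ in $A^s_\a$. By linearity in $g$ and the bound above, $\nm{V_{\mu,g}-V_{\mu,g_n}}\lesssim\nm{g-g_n}_{A^s_\a}\to0$. Each $V_{\mu,g_n}$ with $g_n$ a polynomial is compact: $D^\mu g_n$ is a polynomial, so the operator is bounded $A^p_\a\to A^t_\a$ for some $t>q$ (for instance $H^\infty$-multiplication followed by $I^\mu$, which is compact since $\mu_{2n+1}\to0$). Alternatively, one checks directly that bounded sequences converging uniformly on compacta are mapped to null sequences, using dominated convergence in the integral formula. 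Hence $V_{\mu,g}$ is a norm limit of compact operators.

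\emph{Necessity.} This is the main obstacle. Assume $V_{\mu,g}:A^p_\a\to A^q_\a$ is bounded; I would use a factorization/duality argument. Taking $f\equiv1$ gives $I^\mu D^\mu g=g-g(0)+\ldots\in A^q_\a$, so $g\in A^q_\a$ at least. The boundedness gives
$$
\int|f|^q\,d\nu\lesssim\nm{f}^q_{A^p_\a}\quad\text{(Bergman case)},\qquad d\nu=|D^\mu g|^q\mug^q(1-|z|)^\a\,dA,
$$
and in the Hardy case a tent-space analogue. For $q<p$, Luecking's characterization of $(p,q)$-Carleson measures (respectively, the Hardy-space version via tent spaces in the Lusin-area form) shows that this holds if and only if the function
$$
z\mapsto\frac{\nu(\Delta(z,r))}{(1-|z|)^{2+\a}}
$$
belongs to $L^{p/(p-q)}$ with respect to $(1-|z|)^\a\,dA$. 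Subharmonicity of $|D^\mu g|^q$ on pseudohyperbolic discs, where $\mug$ is essentially constant, converts this into
$$
\int|D^\mu g|^s\mug^s(1-|z|)^\a\,dA\lesssim\nm{V_{\mu,g}}^s.
$$
The Littlewood--Paley equivalence then gives $\nm{g}_{A^s_\a}\lesssim\nm{V_{\mu,g}}$. For the Hardy case the analogous step uses Carleson-measure results for the square function with $q<p$, following Aleman--Siskakis-type arguments, and this is where care is needed.
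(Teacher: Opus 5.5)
Most of your plan is sound. The Bergman-space arguments are exactly the paper's: H\"older's inequality for sufficiency, and Luecking's $q<p$ criterion plus subharmonicity for necessity. For $H^p$, your sufficiency proof is a correct, more hands-on substitute for the paper's appeal to the tent-space embedding theorem. It bounds $|f|$ on $\Gamma(\z)$ by the nontangential maximal function, applies H\"older on $\T$ with exponents $p/q$ and $s/q$, and uses $\nm{N(f)}_{L^p(\T)}\lesssim\nm{f}_{H^p}$. Your route to compactness is also a valid alternative to the paper's, which reads off ``bounded $\Leftrightarrow$ compact'' from part (c) of the embedding theorems. The aside ``compact since $\mu_{2n+1}\to0$'' is not a proof on its own for $p\neq2$. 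Your direct argument does work, even without polynomial approximation: split at $|z|=r$ and use that the tail $\int_{|z|>r}\abs{D^\mu(g)}^s\mug^s(1-|z|)^\a\,dA$ tends to $0$, respectively that the $L^s(\T)$-norm of the area function of $D^\mu(g)$ over $\Gamma(\z)\cap\{|z|>r\}$ tends to $0$.

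The genuine gap is necessity in the Hardy case $\a=-1$. You leave it at ``Carleson-measure results for the square function \ldots\ this is where care is needed'', and nothing in the sketch actually produces $g\in H^s$ there. Your parenthetical suggestion that the Hardy criterion is the $\a=-1$ analogue of Luecking's pseudohyperbolic-average condition is not right. For $q<p$ the embedding $H^p\subset T^q_2(\nu)$ is characterized by a cone-integral condition on $\T$, so the Bergman subharmonicity step has no direct counterpart.

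The missing ingredient is precisely that theorem: $Id:H^p\to T^q_2(\nu)$ is bounded (equivalently, compact) iff $B_\nu(\z)=\int_{\Gamma(\z)}\frac{d\nu(z)}{1-|z|}\in L^{\frac{pq}{2(p-q)}}(\T)$, with $\nm{Id}^2\asymp\nm{B_\nu}_{L^{pq/(2(p-q))}(\T)}$. Take $d\nu=\abs{D^\mu(g)}^2\frac{\mug^2}{1-|z|}\,dA$, the measure for which $\nm{V_{\mu,g}(f)}_{H^q}\asymp\nm{f}_{T^q_2(\nu)}$. Then $B_\nu$ is the square of the weighted area function of $D^\mu(g)$, and $\frac{pq}{2(p-q)}=\frac{s}{2}$. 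So the condition reads $\int_\T\bigl(\int_{\Gamma(\z)}\abs{D^\mu(g)}^2\frac{\mug^2}{(1-|z|)^2}\,dA\bigr)^{s/2}|d\z|<\infty$, which is $\asymp\nm{g}_{H^s}^s$ by the same Littlewood--Paley formula you use for sufficiency.

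This is how the paper gets (i)$\Leftrightarrow$(ii)$\Leftrightarrow$(iii) and $\nm{g}_{H^s}\lesssim\nm{V_{\mu,g}}_{H^p\to H^q}$ in the Hardy case. Without it, or an independent proof of it, your argument does not establish (i)$\Rightarrow$(iii) or the lower norm estimate for $H^p$.
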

The proof is presented in Section~\ref{Sec3}, where we reduce the question to an embedding problem between different spaces.

Together with Theorem~\ref{letterth: bella}, \cite[Theorem 1]{AlemaCima} and \cite[Theorem 2]{RattyaBergman}, this result shows that $V_g$ and $V_{\mu,g}$ are simultaneously bounded (respectively, compact) from $A^p_\a$ to $A^q_\a$ whenever $0<q\leq p <\infty$ and $\a\geq -1$.

The case $p<q$ is more delicate. Consider the higher-order generalization $V_{n,g}(f)(z)= I_n(f\cdot g^{(n)})(z)$ of $V_g$, where $I_0(f)=f$ and $I_n(f)(z) = \int_0^z I_{n-1}(f)(\z)\,d\z$ for $n\in\N$. The boundedness and compactness of $V_{n,g}:A^p_\a\to A^q_\a$ can be characterized in terms of the following spaces.

Let $\b\geq 0$ and $n_\b=\min\{n\in\N : n>\b\}$. We define $\B^\b$ as the space of $f\in\H(\D)$ such that
$$
\sup_{z\in\D} \abs{f^{(n_\b)}(z)}(1-|z|)^{n_\b-\b}<\infty,
$$
equipped with the norm
$$
\nm{f}_{\B^\b} = \sum_{k=0}^{n_\b-1}\abs{f^{(k)}(0)} + \sup_{z\in\D} \abs{f^{(n_\b)}(z)}(1-|z|)^{n_\b-\b},
$$
and its little subspace $\B^\b_0$ consisting of $f\in \B^\b$ satisfying
$$
\lim_{|z|\to 1^-} \abs{f^{(n_\b)}(z)}(1-|z|)^{n_\b-\b}=0.
$$
In particular, for $0<p<q<\infty$, $\a\geq -1$ and $n\in \N$ with $n>(\a+2)(\frac{1}{p}-\frac{1}{q})=\b$, the operator $V_{n,g}:A^p_\a\to A^q_\a$ is bounded (respectively, compact) if and only if $g\in\B^\b$ (respectively, $g\in \B^\b_0$); see \cite{ChalmoukisVolterra,DuLiQu}.

Motivated by this characterization, for a radial weight $\mu$ and $\b\geq0$, we define $\Bb$ as the space of $f\in\H(\D)$ such that
$$
\nm{f}_{\Bb} = \sup_{z\in\D} \abs{D^\mu(f)(z)}\mug(z)(1-|z|)^{-\b}<\infty,
$$
and $\Bb_0$ as the subspace of $f\in \Bb$ satisfying
$$
\lim_{|z|\to 1^-} \abs{D^\mu(f)(z)}\mug(z)(1-|z|)^{-\b} =0.
$$

Before stating the result for $V_{\mu,g}$,  we study the relation between $\Bb$ and $\B^\b$ to understand how $V_{\mu,g}$ compares with $V_{n,g}$. For a radial weight $\mu$, define the lower Matuszewska index by
$$
\b_\mu = \sup\set{\eta\geq0: \frac{\mug(r)}{(1-r)^\eta} \text{ is almost decreasing}}.
$$
Notice that the set defining $\b_\mu$ is non-empty because $\mug$ is decreasing. Moreover, if $\mu\in\DDD$, then $0<\b_\mu<\infty$.

\begin{theorem}\label{th: caract Bmu}
Let $\b\geq 0$ and $\mu$ be a radial weight. Then the following are equivalent:
\begin{itemize}
    \item[(i)] $\mu\in\DDD$ and $\beta < \b_\mu$;
    \item [(ii)] $\B^{\b}= \Bb$ with equivalent norms;
    \item[(iii)] $\B^{\b}\subset \Bb$, and
        $$
        \nm{f}_{\Bb} \lesssim \nm{f}_{\B^{\b}},\quad f\in\H(\D);
        $$
    \item[(iv)] $\B^{\b}_0= \Bb_0$ with equivalent norms;
    \item[(v)] $\B^{\b}_0\subset \Bb_0$, and
        $$
        \nm{f}_{\Bb} \lesssim \nm{f}_{\B^{\b}},\quad f\in\B^{\b}_0.
        $$
\end{itemize}
\end{theorem}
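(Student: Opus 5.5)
The implications (ii)$\Rightarrow$(iii), (iv)$\Rightarrow$(v) and (ii)$\Rightarrow$(iv) are essentially formal; the latter needs only a little-oh version of the estimates used for (i)$\Rightarrow$(ii). So the plan is to prove (i)$\Rightarrow$(ii) and then the two necessity statements (iii)$\Rightarrow$(i) and (v)$\Rightarrow$(i). Since (v) is implied by (iii) (because $\B^\b_0\subset\B^\b$), it suffices to show that (v) forces $\mu\in\DDD$ and $\b<\b_\mu$.

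\emph{Sufficiency.} Assume $\mu\in\DDD$ and $\b<\b_\mu$. I would write $D^\mu$ as a coefficient multiplier and compare it with the operator $f\mapsto\sum (n+1)^{\b}\widehat f(n)z^n$, whose action on $\B^\b$ is well understood. For $\mu\in\DDD$ one has $\mu_{2n+1}\asymp\mug(1-\frac1{n+1})$. Moreover, $\mug(r)/(1-r)^\eta$ is almost decreasing for some $\eta\in(\b,\b_\mu)$, and the $\DD$ condition gives an upper index $\gamma<\infty$. The weight $\mug$ is then regularly varying-type on dyadic blocks. Using the standard dyadic-block (Hadamard/Fejér-type) decomposition $f=\sum_k \Delta_k f$, the quantity $\sup_z|h(z)|\mug(z)(1-|z|)^{-\b}$ is comparable to $\sup_k 2^{k\b}\mug(1-2^{-k})^{-1}\cdot\mug(1-2^{-k})\,\|\Delta_k f\|_{H^\infty}$ in the appropriate sense. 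A cleaner route is to use the known characterization $\B^\b=\{f:\sup_k 2^{-k\b}\|\Delta_k f\|_\infty<\infty\}$ (more precisely via smooth de la Vallée Poussin blocks, valid for every $\b\ge0$ when measured through $f^{(n_\b)}$). On each block, the multiplier $1/\mu_{2n+1}$ equals $\mug(1-2^{-k})^{-1}$ times a smooth symbol with bounded multiplier norm on $H^\infty$ blocks. This uses the doubling regularity of moments, and it gives $\|\Delta_k D^\mu f\|_\infty\asymp \|\Delta_k f\|_\infty/\mug(1-2^{-k})$.

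With $\b<\b_\mu$, which is the analogue of the exponent $n_\b-\b>0$ condition, the sup over $z$ of $|D^\mu f|\mug(z)(1-|z|)^{-\b}$ is controlled by the block sups. This works because $\mug(1-2^{-j})/\mug(1-2^{-k})\lesssim 2^{-(j-k)\eta}$ for $j>k$, a geometric decay beating $2^{(j-k)\b}$. The $\Dd$ and $\DD$ conditions handle the tails on either side. The converse estimate follows from $\mu\in\DD$ in the same manner. Little-oh versions transfer blockwise and give (iv).

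\emph{Necessity.} I would test (v) on specific functions in $\B^\b_0$. Monomials $z^n$ (normalized) lie in $\B^\b_0$ with $\|z^n\|_{\B^\b}\asymp n^{\b}$. Their $\Bb$-norm is $\mu_{2n+1}^{-1}\sup_r r^n\mug(r)(1-r)^{-\b}$. Taking $r=1-1/n$ yields $\mug(1-\frac1n)\lesssim \mu_{2n+1}$, hence $\mu_{2n+1}\asymp\mug(1-\frac1n)$, and optimizing $r$ gives one-sided comparisons. Known lemmas (Peláez–Rättyä) say that such moment comparisons force $\mu\in\DD$, and similarly the reverse forces $\Dd$. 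To get $\b<\b_\mu$, I would use the comparison $\sup_r r^n\mug(r)(1-r)^{-\b}\lesssim n^\b\mu_{2n+1}$ for all $n$, which says $\mug(r)(1-r)^{-\b}\lesssim n^\b\mug(1-1/n)$ whenever $r\ge1-1/n$. This gives almost-decreasingness of $\mug(r)(1-r)^{-\b}$, hence $\b\le\b_\mu$. Strictness is the delicate part. I would handle the endpoint with a lacunary test series $f=\sum_k \e_k 2^{k\b}z^{2^k}$, with $\e_k\to0$ slowly, which lies in $\B^\b_0$. Its $\Bb$-norm blows up unless the ratios $\mug(1-2^{-j})2^{j\b}/\mug(1-2^{-k})2^{k\b}$ are summably small, and an almost decreasing function with a strict power gain is exactly $\b<\b_\mu$.

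\textbf{Main obstacle.} I expect the obstacle to be this strictness $\b<\b_\mu$, together with extracting $\Dd$ from (v) alone. I would try the lacunary construction first, choosing the $\e_k$ adaptively to contradict any failure of a geometric gain.
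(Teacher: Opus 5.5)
\textbf{Sufficiency.} Your sufficiency argument takes a genuinely different route from the paper's. The paper writes $D^\mu f$ as $R^{W^n,V_{\mu_+,n}}(f^{(n)})$ and represents it with the reproducing kernel of $V_{\mu_+,n}$. It then bounds $\int_\D|B^{V_{\mu_+,n}}_z(\zeta)|(1-|\zeta|)^{n+\beta}\,dA(\zeta)\lesssim(1-|z|)^\beta/\widehat{\mu}(z)$, using an exponent $\lambda>\beta$ from $\Dd$.

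Your de la Vall\'ee Poussin block approach is also viable. Condition (i) says precisely that $\widehat{\mu}(r)(1-r)^{-\beta}$ is a normal weight. Under $\DD$ one has $|\partial_x\mu_x|\lesssim\mu_x/x$ and $|\partial_x^2\mu_x|\lesssim\mu_x/x^2$ (use $\log\frac1r\le\frac{1-r}{r}$ and moment doubling). Hence $\widehat{\mu}(1-2^{-k})/\mu_{2n+1}$ is a uniformly smooth multiplier on each block, bounded above and below. Fix the sign, though: $\B^\beta$ is characterized by $\sup_k 2^{k\beta}\|\Delta_k f\|_\infty<\infty$, not by $2^{-k\beta}$. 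Your own computation $\|z^n\|_{\B^\beta}\asymp n^\beta$ shows this.

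\textbf{First gap: deriving $\DD$.} Evaluating $\widehat{\mu}(r)r^m\lesssim m^\beta(1-r)^\beta\mu_{2m+1}$ at $r=1-\frac1m$ gives only $\widehat{\mu}(1-\frac1m)\lesssim\mu_{2m+1}$. That inequality holds for every radial weight. The reverse inequality is what you assert with ``hence $\asymp$'' and then feed into the Pel\'aez--R\"atty\"a characterization. But that reverse inequality \emph{is} $\DD$ (Lemma~\ref{lemma: caract Dgorro}(iv)), so the step is circular. No pointwise choice of $r$ repairs it; you must integrate, as the paper does: $\mu_{\frac32 m+1}=(\frac32m+1)\int_0^1 r^{\frac32 m}\widehat{\mu}(r)\,dr\lesssim m^{\beta+1}\mu_{2m+1}\int_0^1 r^{m/2}(1-r)^\beta\,dr\asymp\mu_{2m+1}$. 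After that, your almost-decreasing step is correct and even yields $\Dd$ when $\beta>0$. However, it only gives $\beta\le\beta_\mu$, and it gives nothing at all when $\beta=0$.

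\textbf{Second gap: strictness $\beta<\beta_\mu$.} This is the more serious gap, and you leave it open as the ``main obstacle''; it is also your only possible source of $\Dd$ when $\beta=0$. Your series $\sum_k\varepsilon_k2^{k\beta}z^{2^k}$ is not in $\B^\beta$ for $\beta>0$; the correct scaling is $2^{-k\beta}$. More importantly, the decisive step is missing. Set $\psi(k)=2^{k\beta}\widehat{\mu}(1-2^{-k})$. You need to show that a uniform bound on $\sum_{k\le j}\psi(j)/\psi(k)$ forces a geometric gain.

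No adaptive choice of $\varepsilon_k$ is needed, since (v) has a uniform constant. The polynomials $P_N=\sum_{k=1}^N 2^{-k\beta}z^{2^k}\in\B^\beta_0$ are bounded in $\B^\beta$. Evaluate $D^\mu P_N$ at $r_j=1-2^{-j}$, using $r_j^{2^k}\ge\frac14$ for $k\le j$ and $\mu_{2^{k+1}+1}\lesssim\widehat{\mu}(r_k)$. This gives $S_j:=\sum_{k\le j}\psi(k)^{-1}\le C\psi(j)^{-1}=C(S_j-S_{j-1})$. Hence $S_{j-1}\le(1-\frac1C)S_j$, and so $\psi(k)^{-1}\le S_k\le(1-\frac1C)^{j-k}C\psi(j)^{-1}$. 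That is, $\widehat{\mu}(r_j)\lesssim 2^{-(j-k)(\beta+\varepsilon)}\widehat{\mu}(r_k)$ with $2^{-\varepsilon}=1-\frac1C$, which gives both $\Dd$ and $\beta<\beta_\mu$.

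This is a cleaner substitute for the paper's argument. The paper tests $F_N=\sum_{k\le N}k^{-\beta-1}z^k$, extracts a factor $\log(N/m)$ from the almost monotonicity of $k^\beta\mu_k$, and iterates along $N=Km$ with $C_1/\log K\le\frac12$.

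A minor point: (iii) does not literally imply (v), since landing in $\Bb_0$ is not automatic. As in the paper, though, you only use the norm inequality on test functions in $\B^\beta_0$, and (iii) provides that.
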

The case $\b=0$ was established in \cite{MorPelRosa}, while for $\b>0$ the proof follows directly from Propositions~\ref{propo: Bmu>} and \ref{propo: Bmu<}. Moreover, Proposition~\ref{propo: Bmu>} also shows that the reverse inclusions and norm inequalities corresponding to Theorem~\ref{th: caract Bmu} (iii) and (v) hold for every radial weight $\mu$.

Our final main result covers the case $p<q$.
\begin{theorem}\label{th: caract Vmu p<q}
Let $0<p<q<\infty$, $\a\geq-1$, $\mu\in\DDD$, $g\in\H(\D)$ and set $\b=(\a+2)(\frac{1}{p}-\frac{1}{q})$. Then
\begin{itemize}
    \item[(i)] $V_{\mu,g}:A^p_\a\to A^q_\a$ is bounded if and only if $g\in \Bb$, with the norm equivalence $$\nm{V_{\mu,g}}_{A^p_\a\to A^q_\a} \asymp \nm{g}_{\Bb};$$
    \item[(ii)] $V_{\mu,g}:A^p_\a\to A^q_\a$ is compact if and only if $g\in \Bb_0$, and for every $g\in\Bb$
        $$
        \nm{V_{\mu,g}}_{e,A^p_\a\to A^q_\a} \asymp \dist(g,\Bb_0)\asymp\limsup_{|z|\to 1^-}\abs{D^\mu(g)(z)}\mug(z)(1-|z|)^{-\b}.
        $$
\end{itemize}
Moreover, if $\b<\b_\mu$, then
\begin{itemize}
    \item[(iii)] $V_{\mu,g}:A^p_\a\to A^q_\a$ is bounded if and only if $g\in \B^{\b}$, with the norm equivalence $$\nm{V_{\mu,g}}_{A^p_\a\to A^q_\a} \asymp \nm{g}_{\B^{\b}};$$
    \item[(iv)] $V_{\mu,g}:A^p_\a\to A^q_\a$ is compact if and only if $g\in \B^{\b}_0$, and for every $g\in \B^\b$
        $$
        \nm{V_{\mu,g}}_{e,A^p_\a\to A^q_\a} \asymp \dist(g,\B^{\b}_0)\asymp\limsup_{|z|\to 1^-}\abs{g^{(n_\b)}(z)}(1-|z|)^{n_\b-\b}.
        $$
\end{itemize}
\end{theorem}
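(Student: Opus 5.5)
The plan is to reduce everything to pointwise estimates of $D^\mu(V_{\mu,g}(f))=f\cdot D^\mu(g)$ combined with a Littlewood--Paley type norm formula for $A^q_\a$ written in terms of $D^\mu$. The first ingredient is the description
\[
\nm{h}_{A^q_\a}^q\asymp \int_\D |D^\mu(h)(z)|^q\,\mug(z)^q\,(1-|z|)^{\a}\,dA(z)+|\widehat h(0)|^q
\]
for $\mu\in\DDD$ and $\a>-1$. For $\a=-1$ the analogous estimate is a Hardy-space area-function (Calder\'on) version with $\mug(z)^2$, and should be available from \cite{BelMorNikPel} or from the proof of Theorem~\ref{letterth: bella}. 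Applying it with $h=V_{\mu,g}(f)$, the quantity to control is $\int_\D|f|^q|D^\mu g|^q\mug^q(1-|z|)^\a\,dA$ (or its area-function counterpart).

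For the upper bound in (i), write $|D^\mu g(z)|\mug(z)\le\nm{g}_{\Bb}(1-|z|)^\b$. This leaves $\int_\D|f|^q(1-|z|)^{q\b+\a}\,dA$. Since $q\b+\a=(\a+2)\frac qp-2$, the embedding $A^p_\a\subset A^q_{(\a+2)q/p-2}$ gives the bound $\nm{f}_{A^p_\a}^q$; this embedding is standard for $p<q$, including $\a=-1$ via Carleson-measure/Duren embedding theorems.

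For the lower bound, test on normalized kernels $f_a(z)=(1-|a|)^{\gamma-(\a+2)/p}(1-\bar a z)^{-\gamma}$ with $\gamma$ large, so that $\nm{f_a}_{A^p_\a}\asymp1$. Then estimate $\nm{V_{\mu,g}f_a}_{A^q_\a}$ from below by the integral over a pseudohyperbolic disc $\Delta(a,r)$. There $|f_a|\asymp(1-|a|)^{-(\a+2)/p}$ and $\mug\asymp\mug(a)$, the latter by $\DD$. Subharmonicity of $|D^\mu g|^q$ then yields
\[
\nm{V_{\mu,g}f_a}_{A^q_\a}\gtrsim |D^\mu g(a)|\mug(a)(1-|a|)^{-\b}.
\]
This gives the norm equivalence in (i). For $\a=-1$ one argues identically with the Hardy-space area integral over a Carleson box.

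For (ii), the first step is the distance formula. The bound $\dist(g,\Bb_0)\ge\limsup$ is trivial. For the reverse, use dilations $g_r(z)=g(rz)$: since $D^\mu$ commutes with dilations, $g_r\in\Bb_0$, and a standard splitting into $|z|\le\rho$ and $|z|>\rho$ shows $\limsup_{r\to1}\nm{g-g_r}_{\Bb}\lesssim\limsup_{|z|\to1}$. The key fact for the outer region is that $\sup_{|z|>\rho}|D^\mu g(rz)|\mug(z)(1-|z|)^{-\b}$ is controlled by the same quantity for $g$, since $\mug(z)(1-|z|)^{-\b}\lesssim\mug(rz)(1-r|z|)^{-\b}$ when $\b<\b_\mu$. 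When $\b\ge\b_\mu$ this monotonicity fails, so I would instead use the Taylor-coefficient Ces\`aro/de la Vall\'ee Poussin means of $g$, which are bounded on $\Bb$ uniformly thanks to the $\DDD$ moment estimates. This is the step I expect to be the main obstacle.

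With the distance formula in hand, the essential-norm upper bound follows from $\nm{V_{\mu,g}}_e\le\nm{V_{\mu,g-g_r}}$, which by (i) is $\lesssim\nm{g-g_r}_{\Bb}$, because $V_{\mu,g_r}$ is compact: its symbol is smooth up to the boundary, so it maps bounded sets into normal-family subsets that converge in norm. For the lower bound, the $f_a$ converge to $0$ uniformly on compacts for $\gamma$ large as $|a|\to1$, hence weakly in the appropriate sense for $p\le1$ as well. For compact $K$ this gives $\nm{Kf_a}\to0$; for $q<1$ I would verify this directly via the normal-family characterization. Consequently $\nm{V_{\mu,g}-K}\gtrsim\limsup_{|a|\to1}\nm{V_{\mu,g}f_a}$, and the estimate above completes the lower bound. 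The compactness characterization then follows immediately.

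Parts (iii) and (iv) follow from (i) and (ii) by Theorem~\ref{th: caract Bmu}. It gives $\Bb=\B^\b$ and $\Bb_0=\B^\b_0$ with equivalent norms, so the distances are comparable. The $\limsup$ formula for $\dist(g,\B^\b_0)$ is the classical dilation argument for $\B^\b$.
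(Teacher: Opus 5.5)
Your proof of (i) is sound. It is a direct alternative to the paper's route through Carleson/tent-space embedding theorems and Lemma~\ref{lemma: esp crecimiento}. For $\a=-1$ the upper bound needs the tent-space embedding $H^p\to T^q_2\big((1-|z|)^{2\b-1}dA\big)$ rather than $H^p\subset A^q_{q/p-2}$, but that embedding is also standard. Part (iii), and the reduction of (iv) to (ii) via Theorem~\ref{th: caract Bmu}, are as in the paper. There are, however, two genuine gaps in (ii).

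\textbf{Upper bound and distance formula.} You use that $g_r\in\Bb_0$ and that $V_{\mu,g_r}$ is compact ``because its symbol is smooth up to the boundary''. For $g\neq 0$ both require $L_{\b,\mu}=\limsup_{r\to1^-}\mug(r)(1-r)^{-\b}=0$, and this can fail when $\b\ge\b_\mu$. Take $\mug(r)=(1-r)^\b$, a weight in $\DDD$ with $\b_\mu=\b$, and $g\equiv1$. Then $g_r=g\notin\Bb_0$, and $V_{\mu,1}=\mu_1^{-1}I^\mu$ is bounded but not compact $A^p_\a\to A^q_\a$.

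If $L_{\b,\mu}>0$ then $\Bb_0=\{0\}$, so no approximation scheme helps, whether dilations or Ces\`aro/de la Vall\'ee Poussin means. Instead one checks directly that $\dist(g,\Bb_0)=\nm{g}_{\Bb}\asymp\nm{D^\mu(g)}_{H^\infty}$ and that the $\limsup$ equals $L_{\b,\mu}\nm{D^\mu(g)}_{H^\infty}$ (with $\Bb=\{0\}$ if $L_{\b,\mu}=\infty$). One then uses $\nm{V_{\mu,g}}_{e}\le\nm{V_{\mu,g}}$.

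When $L_{\b,\mu}=0$, your ``main obstacle'' is illusory. No almost-monotonicity of $\mug(s)(1-s)^{-\b}$ is needed, because the maximum modulus principle gives $|D^\mu(g)(rz)|\le M_\infty(|z|,D^\mu(g))$. Hence
\[
\sup_{|z|>\rho}\abs{D^\mu(g-g_r)(z)}\mug(z)(1-|z|)^{-\b}\le 2\sup_{|z|>\rho}\abs{D^\mu(g)(z)}\mug(z)(1-|z|)^{-\b}
\]
holds for every $\b\ge0$, since the weight is evaluated at the same radius. This three-case argument is Lemma~\ref{lemma: distancia Bb}. The Ces\`aro route, with its unproved uniform bound on $\Bb$, should be dropped.

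\textbf{Lower bound when $p\le1$.} You assert $\nm{Kf_a}\to0$ for every compact $K$. For $p>1$ and any $q$ this follows from reflexivity (weak convergence $f_a\to 0$), but for $p\le1$ it fails in general. For $p=1$, $\a=0$, pair your $f_t$ ($0<t<1$, $\gamma>2$) with the Bloch function $\log\frac{1}{1-z}$ under the duality $(A^1)^*\simeq\B$. This gives $\sum_{n\ge1}\widehat{f_t}(n)/(n(n+1))\asymp1$ as $t\to1^-$, so the associated rank-one operator $K$ is compact but $\nm{Kf_t}\not\to0$. The normal-family criterion characterizes compactness of operators like $V_{\mu,g}$, not of an arbitrary compact $K$.

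The missing idea is the subsequence argument of Lemma~\ref{lemma: compact trick}:
\begin{enumerate}
\item Take $a_n$ realizing the $\limsup$, and pass to a subsequence with $Kf_{a_n}\to h$ in norm.
\item $V_{\mu,g}(f_{a_n})\to0$ locally uniformly, which follows from $I^\mu(F)(z)=\int_0^1F(r^2z)r\mu(r)\,dr$. Hence Fatou's lemma gives $\nm{h}\le\nm{V_{\mu,g}-K}$.
\item Therefore $\limsup_n\nm{V_{\mu,g}(f_{a_n})}\lesssim\nm{V_{\mu,g}-K}$, without ever needing $Kf_{a_n}\to0$.
\end{enumerate}
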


Together with Proposition 4.2, this theorem shows that, for $0<p<q<\infty$, $\a>-1$, $\mu\in\DDD$ and every $n>(\a+2)(\frac{1}{p}-\frac{1}{q})=\b$, boundedness (respectively, compactness) of $V_{\mu,g}:A_\alpha^p\to A_\alpha^q$ implies boundedness (respectively, compactness) of $V_{n,g}:A_\alpha^p\to A_\alpha^q$. In addition, if $\b<\beta_\mu$, the two operators are simultaneously bounded (respectively, compact).

The rest of the paper is organized as follows. Section~\ref{Sec2} collects some technical results on radial weights and useful representation formulas for the fractional derivative. Section~\ref{Sec3} studies the boundedness and compactness of $V_{\mu,g}:A^p_\a\to A^q_\a$ for $\a\geq -1$, concluding with the proof of Theorem~\ref{th: caract Vmu q<p}. Theorem~\ref{th: caract Bmu} is proved in Section~\ref{Sec4}. Finally, Section~\ref{Sec5} develops the essential norm estimates and all the necessary results leading to the proof of Theorem~\ref{th: norma esencial p=q} and Theorem~\ref{th: caract Vmu p<q}.

The letter $C=C(\cdot)$ will denote an absolute constant whose value depends on the parameters indicated in parenthesis, and may change from one occurrence to another.
We will use the notation $a\lesssim b$ if there exists a constant
$C=C(\cdot)>0$ such that $a\le Cb$, and $a\gtrsim b$ is understood
in an analogous manner. In particular, if $a\lesssim b$ and
$a\gtrsim b$, then we write $a\asymp b$ and say that $a$ and $b$ are comparable.

\section{Radial weights and technical lemmas}\label{Sec2}
Throughout the manuscript we will employ different descriptions of the classes of radial weights $\DD$ and $\Dd$. The next result gathers several characterizations of $\DD$ proved in \cite[Lemma 2.1]{PelaezSmallWeight}. 
\begin{letterlemma}\label{lemma: caract Dgorro}
Let $\om$ be a radial weight. Then the following statements are equivalent:
\begin{itemize}
	\item[(i)] $\om\in\DD$;
	\item[(ii)] There exists $\gamma>0$ and a constant $C=C(\gamma,\om)>0$ such that
        $$\omg(t)\leq C \left ( \frac{1-t}{1-s} \right )^\gamma \omg(s),\quad 0\leq t \leq s <1;$$
	\item[(iii)] There exists a constant $C=C(\om)>0$ such that
		$$
		\om_x\leq C\om_{2x},\quad x\geq1;
        $$
    \item[(iv)] There exists a constant $C=C(\om)>0$ such that
        $$
        \om_x \leq C \omg \left (1-\frac{1}{x} \right ),\quad 1\leq x<\infty.
        $$
\end{itemize}
\end{letterlemma}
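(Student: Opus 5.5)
The plan is to prove the chain (i)$\Rightarrow$(ii)$\Rightarrow$(iii)$\Rightarrow$(iv)$\Rightarrow$(i), working throughout with the tail $\omg(r)=\int_r^1\om(s)\,ds$. For (i)$\Rightarrow$(ii), I will iterate the doubling inequality. With $r_n=1-2^{-n}(1-t)$ and $s$ fixed so that $r_n\le s<r_{n+1}$, repeated use of $\omg(r)\le C\omg(\frac{1+r}{2})$ gives
$$\omg(t)\le C^{n+1}\omg(r_{n+1})\le C^{n+1}\omg(s).$$
Since $2^{n}\le \frac{1-t}{1-s}$, the factor $C^{n+1}$ is at most $C\left(\frac{1-t}{1-s}\right)^{\gamma}$ with $\gamma=\log_2 C$.

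For (ii)$\Rightarrow$(iii), I will split the moment at a point tied to $x$. For $x\ge1$ write
$$\om_x=\int_0^1 r^x\om(r)\,dr=x\int_0^1 r^{x-1}\omg(r)\,dr,$$
after integrating by parts, using that the boundary terms vanish or have the right sign. The same identity holds for $\om_{2x}$. It then suffices to bound $\int_0^1 r^{x-1}\omg(r)\,dr$ by a multiple of $\int_0^1 r^{2x-1}\omg(r)\,dr$, and I will split at $1-1/x$:
\begin{itemize}
\item On $[1-1/x,1)$ we have $r^{x-1}\asymp r^{2x-1}\asymp 1$, so the two integrals are comparable there.
\item On $[0,1-1/x]$, (ii) gives $\omg(r)\le C(x(1-r))^\gamma\,\omg(1-1/x)$. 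Hence this piece is at most
$$C\,\omg(1-\tfrac1x)\int_0^1 r^{x-1}x^\gamma(1-r)^\gamma\,dr\lesssim \omg(1-\tfrac1x)\,x^{-1}.$$
This is in turn $\lesssim \int_{1-1/x}^{1-1/(2x)}r^{2x-1}\omg(r)\,dr$, using monotonicity of $\omg$ together with (ii) applied once more between $1-1/x$ and $1-1/(2x)$.
\end{itemize}

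For (iii)$\Rightarrow$(iv), I will first note that $\om_x\ge \int_{1-1/x}^1 r^x\om\gtrsim\omg(1-1/x)$ always holds. The content of (iv) is the reverse bound. Using $r^{x}\le e^{-x(1-r)}$ on dyadic rings, I expect to bound $\om_x$ by the mass near $1-1/x$ plus a term $r_0^{x}\om_0$ whose decay is controlled by iterating (iii): $\om_x\le C^k\om_{2^kx}$. Iterating (iii) shows $\om_{2^k x}\ge C^{-k}\om_x$, which says the moments decay at most polynomially. Combined with $\om_{2^kx}\le(1-\frac1x)^{2^kx}\om_0+\omg(1-\frac1x)$, obtained by splitting the integral at $1-1/x$, choosing $k$ with $2^k$ large makes the exponential term negligible against $C^{-k}\om_x$, and the conclusion $\om_x\lesssim\omg(1-1/x)$ follows. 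I expect this absorption to be the delicate step, since the constants must be arranged so that the $\om_0$ term is genuinely absorbed. If that fails, I will instead apply the splitting estimate at $2^kx$ and compare with $\omg(1-\frac1x)$ via (iii) and the trivial lower bound.

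Finally, for (iv)$\Rightarrow$(i), I will take $x=\frac{2}{1-r}$, so that $1-\frac1x=\frac{1+r}{2}$. Then
$$\omg(r)\le \int_r^1 \left(\frac{s}{r}\right)^x\om(s)\,ds\le r^{-x}\om_x\lesssim \om_x\lesssim\omg\left(\tfrac{1+r}{2}\right),$$
where $r^{-x}\lesssim1$ holds for $r\ge1/2$. Small $r$ is handled trivially, because $\omg(r)\le\omg(0)$ and $\omg(3/4)>0$.
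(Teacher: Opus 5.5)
Your steps (i)$\Rightarrow$(ii), (ii)$\Rightarrow$(iii) and (iv)$\Rightarrow$(i) are sound. One small detail in (ii)$\Rightarrow$(iii): the comparison $r^{x-1}\asymp r^{2x-1}\asymp 1$ on $[1-\frac1x,1)$ needs $x\ge 2$, say. The range $1\le x\le 2$ is trivial, because $\om_x\le\om_0$ and $\om_{2x}\ge\om_4>0$.

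There is a genuine gap in (iii)$\Rightarrow$(iv), exactly at the absorption you flagged, and it fails as written. Splitting at $t=1-\frac1x$ and using $s^{2^kx}\le t^{2^kx}$ on $[0,t]$ gives
$$\om_x\le C^k\om_{2^kx}\le C^k e^{-2^k}\om_0+C^k\omg\left(1-\tfrac1x\right).$$
For fixed $k$, the error $C^ke^{-2^k}\om_0$ is a fixed positive number. But $\om_x\to 0$ and $\omg(1-\frac1x)\to0$ as $x\to\infty$, so nothing is absorbed for large $x$. If instead you let $k$ depend on $x$, the lower bound $\om_x\gtrsim x^{-\log_2 C}$ that follows from (iii) forces $2^k\gtrsim\log x$. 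That produces the non-uniform constant $C^k\approx(\log x)^{\log_2 C}$. Your fallback does not help: splitting at $2^kx$ reproduces the same $\om_0$ term, and the trivial bound $\om_y\gtrsim\omg(1-\frac1y)$ points the wrong way for an upper estimate of $\om_x$.

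The missing idea is to make the error proportional to $\om_x$ rather than to $\om_0$. On $[0,t]$, write $s^{2^kx}=s^{(2^k-1)x}s^x\le t^{(2^k-1)x}s^x$. This gives
$$\om_{2^kx}\le t^{(2^k-1)x}\om_x+\omg(t)\le e^{1-2^k}\om_x+\omg\left(1-\tfrac1x\right),\qquad t=1-\tfrac1x,$$
and hence $\om_x\le C^ke^{1-2^k}\om_x+C^k\omg(1-\frac1x)$. Fix $k$, independent of $x$, with $C^ke^{1-2^k}\le\frac12$, and use $\om_x\le\om_0<\infty$. This yields $\om_x\le 2C^k\omg(1-\frac1x)$ for all $x\ge1$.

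With this replacement your cycle (i)$\Rightarrow$(ii)$\Rightarrow$(iii)$\Rightarrow$(iv)$\Rightarrow$(i) closes. The paper does not prove this lemma; it only quotes it from the literature.
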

We will also use a couple of descriptions of the class $\Dd$, see \cite[Lemma B]{PelRosa}.
\begin{letterlemma}\label{lemma: caract dcheck}
Let $\om$ be a radial weight. Then the following statements are equivalent: 
\begin{itemize}
	\item[(i)] $\om\in\Dd$;
	\item[(ii)] There exists $\eta>0$ and a constant $C=C(\eta,\om)>0$ such that
	\begin{equation*}
		\omg(s)\leq C \left ( \frac{1-s}{1-t} \right )^\eta \omg(t),\quad 0\leq t \leq s <1;
	\end{equation*}
\item[(iii)] For each (equivalently for some) $\gamma>0$ there is $\eta_0=\eta_0(\gamma)< 1$ such that for any  $\eta_0<\eta\leq 1$ there is $C=C(\om,\gamma, \eta)>0$ such that
	$$
	\int_0^r \frac{ds}{\omg(s)^\gamma(1-s)^\eta} \leq \frac{C}{\omg(r)^\gamma(1-r)^{\eta-1}},\quad 0\leq r < 1.
	$$  
\end{itemize}
\end{letterlemma}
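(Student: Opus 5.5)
The plan is to prove (i)$\Leftrightarrow$(ii) by iterating the defining inequality of $\Dd$, then (ii)$\Rightarrow$(iii) by a direct estimate of the integral, and finally (iii)$\Rightarrow$(i) through a differential inequality for the primitive in (iii). That last implication is the main obstacle.

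\emph{(i)$\Rightarrow$(ii).} Let $K>1$ and $C>1$ be the constants in the definition of $\Dd$. Iterating gives
$$\omg(t)\ge C^n\omg\left(1-\frac{1-t}{K^n}\right),\quad n\in\N.$$
Given $0\le t\le s<1$, choose $n=\lfloor \log_K\frac{1-t}{1-s}\rfloor$. Then $1-\frac{1-t}{K^n}\le s$, and since $\omg$ is decreasing,
$$\omg(s)\le C^{-n}\omg(t)\le C\left(\frac{1-s}{1-t}\right)^{\eta}\omg(t),\qquad \eta=\frac{\log C}{\log K}.$$

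\emph{(ii)$\Rightarrow$(i).} Put $s=1-\frac{1-t}{K}$ with $K$ so large that $C K^{-\eta}<1$. This gives the definition of $\Dd$ with constant $(CK^{-\eta})^{-1}>1$.

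\emph{(ii)$\Rightarrow$(iii), for every $\gamma$.} For $s\le r$, (ii) yields
$$\omg(s)^{-\gamma}\le C\,\omg(r)^{-\gamma}\left(\frac{1-r}{1-s}\right)^{\gamma\eta}.$$
Set $\eta_0=\max\{1-\gamma\eta,0\}<1$. For $\eta_0<\eta'\le1$ we have $\gamma\eta+\eta'>1$, so
$$\int_0^r\frac{ds}{\omg(s)^\gamma(1-s)^{\eta'}}\le C\,\omg(r)^{-\gamma}(1-r)^{\gamma\eta}\int_0^r(1-s)^{-\gamma\eta-\eta'}\,ds\lesssim \omg(r)^{-\gamma}(1-r)^{1-\eta'}.$$

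\emph{(iii)$\Rightarrow$(i), using only some $\gamma$ and some admissible $\eta\le 1$.} Let
$$h(r)=\int_0^r\frac{ds}{\omg(s)^\gamma(1-s)^\eta}.$$
Then (iii) reads $h'(r)\ge h(r)/(C(1-r))$, so $(\log h)'\ge \frac{1}{C(1-r)}$ and, for $r'\le r$,
$$h(r)\ge\left(\frac{1-r'}{1-r}\right)^{1/C}h(r').$$
For a lower bound on $h(r')$ we use that $\omg(s)^{-\gamma}\ge\omg(r'')^{-\gamma}$ for $s\ge r''$, where $1-r''=2(1-r')$. This gives
$$h(r')\ge\int_{r''}^{r'}\ge c\,\omg(r'')^{-\gamma}(1-r'')^{1-\eta}.$$
The upper bound is (iii) itself, $h(r)\le C\omg(r)^{-\gamma}(1-r)^{1-\eta}$.

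Now take $1-r=(1-r')/K$, so that $1-r''=2K(1-r)$. Chaining the three inequalities gives
$$\omg(r'')^{\gamma}\gtrsim K^{1/C+1-\eta}\,\omg(r),$$
with constants independent of $K$ and $r$. Since $\eta\le1$, the exponent $1/C+1-\eta$ is positive, so fixing $K$ large yields $\omg(r'')\ge 2\,\omg\left(1-\frac{1-r''}{2K}\right)$. As $r''$ is arbitrary (for $r''<0$ one adjusts trivially using monotonicity), this is $\om\in\Dd$ with constant $2K$. The delicate point is arranging the lower bound for $h$ at a shifted point so that the growth factor $K^{1/C}$ is not cancelled by the power $K^{1-\eta}$; this works precisely because $\eta\le1$.
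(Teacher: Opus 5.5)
The paper gives no proof of this lemma: it is quoted from \cite[Lemma B]{PelRosa}, so there is no in-paper argument to compare against. Your proof is correct and self-contained.

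For (i)$\Leftrightarrow$(ii) you iterate the $\Dd$ condition and take $n=\lfloor\log_K\frac{1-t}{1-s}\rfloor$, which is the standard argument. Your (ii)$\Rightarrow$(iii), with $\eta_0=\max\{1-\gamma\eta,0\}<1$, works for every $\gamma>0$. Combined with your (iii)$\Rightarrow$(i), which uses only a single $\gamma$, this justifies the ``for each (equivalently for some)'' clause. In (iii)$\Rightarrow$(i), the inequality $h'(r)\ge h(r)/(C(1-r))$ is exactly (iii) rewritten. The chain of three estimates actually gives
\[
\omg(r'')^{\gamma}\gtrsim K^{1/C+1-\eta}\,\omg(r)^{\gamma},
\]
so you dropped the exponent $\gamma$ on $\omg(r)$. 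After taking $\gamma$-th roots, $\omg(r'')\ge 2\,\omg\bigl(1-\frac{1-r''}{2K}\bigr)$ follows for $K$ large, as you claim.

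Two small remarks. First, the case $r''<0$ never arises: $r'=\frac{1+r''}{2}\in[\frac12,1)$ parametrizes every $r''\in[0,1)$, and $h(r')>0$ there. Second, the logarithmic-derivative step is only needed at the endpoint $\eta=1$. Since (iii) holds for all $\eta\in(\eta_0,1]$ with $\eta_0<1$, you could choose some $\eta<1$ and use only the monotonicity $h(r)\ge h(r')$, which already produces the growth factor $(2K)^{1-\eta}$. Your version proves slightly more: (iii) for one $\gamma$ and the single exponent $\eta=1$ already forces $\om\in\Dd$.
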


Finally, we prove a technical lemma. We write $D(z,r)$ for the Euclidean disk with center $z$ and radius $r$.
\begin{lemma}\label{lemma: esp crecimiento}
Let $\a\geq -1$, $0<\beta,\g<\infty$, $\mu\in\DD$ and $G\in\H(\D)$. Then
\begin{itemize}
    \item[(a)] The following conditions are equivalent:
    \begin{itemize}
        \item[(ai)] $$
        \sup_{z\in\D} \abs{G(z)}\mug(z)(1-\abs{z})^{-\b(\a+2)}<\infty;
        $$
        \item[(aii)]
        $$\sup_{a\in\D} \frac{\int_{S(a)} \abs{G(z)}^\g\mug(z)^\g(1-\abs{z})^\a dA(z)}{(1-|a|)^{(\a+2)(1+\g\b)}}<\infty.$$
    \end{itemize}
Moreover,
$$
\left (\sup_{z\in\D} \abs{G(z)}\mug(z)(1-\abs{z})^{-\b(\a+2)} \right )^\g \asymp \sup_{a\in\D} \frac{\int_{S(a)} \abs{G(z)}^\g\mug(z)^\g(1-\abs{z})^\a dA(z)}{(1-|a|)^{(\a+2)(1+\g\b)}},\quad G\in\H(\D)
$$
\item[(b)] The following conditions are equivalent:
\begin{itemize}
    \item[(bi)] $$
        \lim_{|z|\to 1^-} \abs{G(z)}\mug(z)(1-\abs{z})^{-\b(\a+2)}=0;
            $$
    \item[(bii)]
        $$\lim_{|a|\to 1^-} \frac{\int_{S(a)} \abs{G(z)}^\g\mug(z)^\g(1-\abs{z})^\a dA(z)}{(1-|a|)^{(\a+2)(1+\g\b)}}=0.$$
\end{itemize}
\end{itemize}
\end{lemma}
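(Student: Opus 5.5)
The plan is a direct comparison between the pointwise supremum and the Carleson-box averages. We use two facts for $\mu\in\DD$. First, by Lemma~\ref{lemma: caract Dgorro}(ii), $\mug(z)\asymp\mug(a)$ whenever $1-|z|\asymp 1-|a|$; in particular this holds on pseudo-hyperbolic disks $\Delta(a,r)$ and on the top part of $S(a)$. Second, for every $n\in\N$ and every $\g>0$, the subharmonicity of $|G|^\g$ gives
$$
|G(a)|^\g\lesssim \frac{1}{(1-|a|)^2}\int_{D(a,\frac{1-|a|}{2})}|G(z)|^\g\,dA(z).
$$

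For (aii)$\Rightarrow$(ai), fix $a$ with $|a|\ge 1/2$ and choose a point $a'$ with $1-|a'|\asymp 1-|a|$ such that $D(a,\frac{1-|a|}{2})\subset S(a')$. Applying the subharmonicity estimate and then comparability of $\mug$ and of $(1-|z|)^\a$ on the disk yields
$$
|G(a)|^\g\mug(a)^\g\lesssim (1-|a|)^{-2-\a}\int_{S(a')}|G|^\g\mug^\g(1-|z|)^\a\,dA.
$$
By (aii), the right-hand side is at most $C(1-|a|)^{(\a+2)\g\b}$. Taking $\g$-th roots gives (ai) near the boundary. For $|a|<1/2$ we use compactness together with $S(0)=\D$. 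In the case $\a=-1$ one only needs $(1-|z|)^\a\asymp(1-|a|)^{-1}$ on the disk, which still holds, so no separate treatment is required.

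For (ai)$\Rightarrow$(aii), let $M$ denote the supremum in (ai). Then
$$
\int_{S(a)}|G|^\g\mug^\g(1-|z|)^\a\,dA\le M^\g\int_{S(a)}(1-|z|)^{\a+\g\b(\a+2)}\,dA.
$$
Since $\a+\g\b(\a+2)>-1$ (for $\a=-1$ this uses $\g\b>0$), the integral is $\asymp(1-|a|)^{\a+2+\g\b(\a+2)}=(1-|a|)^{(\a+2)(1+\g\b)}$. This step is routine. Combining both directions yields the stated norm comparison, with constants depending only on $\a,\b,\g,\mu$.

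For (b), the implication (bii)$\Rightarrow$(bi) follows from the same local estimate, because $|a'|\to1$ as $|a|\to1$. For (bi)$\Rightarrow$(bii), given $\e>0$ choose $r_0$ such that the quantity in (bi) is below $\e$ on $|z|>r_0$. When $|a|>r_0$ we have $S(a)\subset\{|z|>r_0\}$, so the computation above bounds the ratio by $C\e^\g$. The main care point is choosing $a'$ so that the disk lies inside a Carleson square of comparable size, and checking that the doubling of $\mug$ is used only locally. I do not expect a genuine obstacle.
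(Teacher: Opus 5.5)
Your proposal is correct and follows essentially the same route as the paper. In the direction (ai)$\Rightarrow$(aii) you integrate the pointwise bound over $S(a)$; in the direction (aii)$\Rightarrow$(ai) you apply subharmonicity on a disk of radius comparable to $1-|a|$ that sits inside a Carleson square $S(a')$ of comparable size, using the $\DD$ condition only to compare $\mug$ locally. (The paper takes radius $\frac{1-|z|}{4}$ and the explicit point $z^\star=\frac{3|z|-1}{2}e^{i\arg z}$.) Your explicit treatment of $|a|<\frac12$ via $S(0)=\D$ fills in a case the paper leaves implicit.
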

\begin{proof}
   We first prove (a). Assume that $(ai)$ holds, that is,
    $$
    \rho(G)=\sup_{z\in\D} \abs{G(z)}\mug(z)(1-\abs{z})^{-\b(\a+2)}<\infty,
    $$
    then, for each $a\in\D$,
    \begin{equation}\label{eq:lemma esp crecimiento 1}
        \begin{split}
            \int_{S(a)} \abs{G(z)}^\g\mug(z)^\g(1-\abs{z})^\a dA(z) &\leq \rho(G)^\g \int_{S(a)} (1-\abs{z})^{\a+\g\b(\a+2)} dA(z) \\
            &\lesssim \rho(G)^\g (1-|a|)^{(\a+2)(1+\g\b)},
        \end{split}
    \end{equation}
    so $(aii)$ follows by taking the supremum over $a\in\D$.
    
    Conversely, assume that $(aii)$ holds. For $|z|>\frac{1}{2}$, set $z^\star = \frac{3|z|-1}{2}e^{i\arg{z}}$, then by subharmonicity and the fact that $\mu\in\DD$, 
    \begin{equation}\label{eq:lemma esp crecimiento 2}
    \begin{split}
    \abs{G(z)}^\g\mug(z)^\g(1-\abs{z})^{-\g\b(\a+2)} &\lesssim \frac{(1-|z|)^{-\g\b(\a+2)}}{(1-|z|)^{\a+2}}\int_{D(z,\frac{1-|z|}{4})} \abs{G(\z)}^\g\mug(\z)^\g(1-|\z|)^\a dA(\z)\\
    &\lesssim \frac{\int_{S(z^\star)} \abs{G(\z)}^\g\mug(\z)^\g(1-\abs{\z})^\a dA(\z)}{(1-|z^\star|)^{(\a+2)(1+\g\b)}}\\
    &\leq \sup_{a\in\D} \frac{\int_{S(a)} \abs{G(z)}^\g\mug(z)^\g(1-\abs{z})^\a dA(z)}{(1-|a|)^{(\a+2)(1+\g\b)}}.
    \end{split}
    \end{equation}
    Concluding the proof of $(a)$.

    The proof of $(b)$ requires only minor modifications in the proof of $(a)$. Assume first that $(bi)$ holds. Given $\e>0$, there exists $0<r<1$ such that
    $$
    \abs{G(z)}\mug(z)(1-\abs{z})^{-\b(\a+2)} < \e^{\frac{1}{\g}},\quad r\leq|z|<1,
    $$
    then by reproducing the computation in \eqref{eq:lemma esp crecimiento 1} gives
    $$
    \frac{\int_{S(a)} \abs{G(z)}^\g\mug(z)^\g(1-\abs{z})^\a dA(z)}{(1-|a|)^{(\a+2)(1+\g\b)}} \lesssim \e,\quad r<|a|<1,
    $$
    which proves $(bii)$. Conversely, assume that $(bii)$ holds. Given $\e>0$, choose $0<r<1$ such that 
    $$
    \frac{\int_{S(a)} \abs{G(z)}^\g\mug(z)^\g(1-\abs{z})^\a dA(z)}{(1-|a|)^{(\a+2)(1+\g\b)}} < \e^\g,\quad r<|a|<1,
    $$
    If $\frac{1+2r}{3}<|z|<1$, then $r<|z^\star|<1$, so \eqref{eq:lemma esp crecimiento 2} yields
    $$
    \abs{G(z)}\mug(z)(1-\abs{z})^{-\b(\a+2)} \lesssim \e,\quad \frac{1+2r}{3}<|z|<1,
    $$
    concluding the proof.
\end{proof}
    \subsection{Reproducing formulas and useful identities}\label{subse:22}
The   Bergman reproducing kernel  induced by a radial weight $\om$, can be written as 
\begin{equation}\label{eq:B}
B^\om_z(\z)=\sum_{n=0}^\infty\frac{\left(\overline{z}\z\right)^n}{2\om_{2n+1}}, \quad z,\z\in \D.
\end{equation}

We next consider a fractional integral-derivative operator that includes $D^\mu$ as a particular case. For a pair of radial weights $\om$ and $\nu$, the fractional integral-derivative operator
	$$
	R^{\om,\nu}(f)(z)=\sum_{n=0}^{\infty}\frac{\om_{2n+1}}{\nu_{2n+1}}\widehat{f}(n)z^n,\quad f\in\H(\D),
	$$
was first introduced in \cite{PeralaFractional}, and a calculation shows (see \cite{PeralaFractional,PeralaRattyaWang}) that
\begin{equation}\label{eq:mu+}
   D^\mu(f)(z) = R^{1,\mu_+}(f)(z),\quad f\in\H(\D),\quad z\in\D,
\end{equation}
where  $\mu_+(z)=\int_{\abs{z}}^1\frac{\mu(s)}{s}ds,$ for $z\in\D\setminus\{0\}$. We recall the following integral representation of $R^{\om,\nu}$ from \cite[(3.3)]{PeralaFractional}.
	\begin{lettertheorem}\label{th:perala}
		Let $\om$ and $\nu$ be radial weights. Then
		$$
		R^{\om,\nu}(f)(z) = \int_{\D} f(\z)\overline{B^{\nu}_{z}(\z)}\om(\z)dA(\z),\quad f\in A^1_\om,\quad z\in\D.
		$$
	\end{lettertheorem}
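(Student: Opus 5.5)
The plan is to verify the identity coefficient by coefficient. I will expand the kernel in its power series \eqref{eq:B}, interchange sum and integral, and compute each resulting integral in polar coordinates.

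\emph{Step 1 (uniform convergence of the kernel).} Fix $z\in\D$. For every $0<r<1$ and $n\ge 0$ we have
$$
\nu_{2n+1}\ge\int_r^1 s^{2n+1}\nu(s)\,ds\ge r^{2n+1}\widehat{\nu}(r),
$$
and $\widehat{\nu}(r)>0$ by the definition of a radial weight. Choosing $r$ with $|z|<r^2<1$ gives
$$
\frac{|z|^n}{\nu_{2n+1}}\le \frac{1}{r\,\widehat{\nu}(r)}\Bigl(\frac{|z|}{r^2}\Bigr)^n,
$$
which is summable in $n$. Hence the series $\overline{B^\nu_z(\z)}=\sum_{n}\frac{(z\overline{\z})^n}{2\nu_{2n+1}}$ converges uniformly for $\z\in\overline{\D}$, and $\z\mapsto B^\nu_z(\z)$ is bounded on $\D$. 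Since $f\in A^1_\om$ means $f\om\in L^1(\D,dA)$, dominated convergence lets us integrate term by term:
$$
\int_\D f(\z)\overline{B^\nu_z(\z)}\om(\z)\,dA(\z)=\sum_{n=0}^\infty \frac{z^n}{2\nu_{2n+1}}\int_\D f(\z)\overline{\z}^n\om(\z)\,dA(\z).
$$

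\emph{Step 2 (computing each integral).} Write $\z=se^{it}$ and recall $dA=\frac{s\,ds\,dt}{\pi}$. Tonelli and Fubini apply because $f\om\in L^1$. For each fixed $s<1$, the Taylor coefficient formula on the circle gives
$$
\frac1{2\pi}\int_0^{2\pi}f(se^{it})e^{-int}\,dt=\widehat f(n)s^n .
$$
Therefore
$$
\int_\D f(\z)\overline{\z}^n\om(\z)\,dA(\z)=2\int_0^1 s^{n+1}\om(s)\,\widehat f(n)s^n\,ds=2\,\om_{2n+1}\widehat f(n).
$$
Substituting this into Step 1 yields $\sum_n \frac{\om_{2n+1}}{\nu_{2n+1}}\widehat f(n)z^n=R^{\om,\nu}(f)(z)$. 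As a by-product, the series defining $R^{\om,\nu}(f)$ converges for every $z\in\D$ whenever $f\in A^1_\om$.

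The only delicate point is justifying the interchange of sum and integral in Step 1. This rests on the elementary lower bound $\nu_{2n+1}\ge r^{2n+1}\widehat\nu(r)$, which shows that $1/\nu_{2n+1}$ grows at most like $r^{-2n}$ for any $r<1$. Step 2 is routine.
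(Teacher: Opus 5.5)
Your argument is correct and complete. The bound $\nu_{2n+1}\ge r^{2n+1}\widehat{\nu}(r)$ gives uniform convergence of the kernel series on $\overline{\D}$, which justifies integrating term by term against $f\om\in L^1$, and the polar-coordinate identity $\int_\D f(\z)\overline{\z}^n\om(\z)\,dA(\z)=2\om_{2n+1}\widehat{f}(n)$ then recovers exactly the Taylor coefficients of $R^{\om,\nu}(f)$. This is the standard coefficient-wise verification of the identity. The paper itself gives no proof and simply quotes the formula from Per\"al\"a's work \cite[(3.3)]{PeralaFractional}.
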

This integral representation yields an alternative formula for $D^{\mu}$ in terms of the $n$th derivative. To state it, we introduce the following weights. For a radial weight $\mu$ we consider the iterated radial weights 
	$$
	V_{\mu_+,n}(z)= 2\int_{\abs{z}}^1 rV_{\mu_+,n-1}(r)dr,\quad z\in \D,\, n\in\N\quad \text{and} \quad V_{\mu_+,0}(z)=\mu_+(z),
	$$
	and for a radial weight $\om$ we consider
	$\om^\ast(z)=\int_{\abs{z}}^1s\om(s)\log\frac{s}{\abs{z}}ds$ for $z\in\D\setminus\{0\}$ and its iterations
	$$
	\om^{\ast,n}(z)=\int_{\abs{z}}^1s\om^{\ast,n-1}(s)\log\frac{s}{\abs{z}}ds,\quad z\in \D\setminus\{0\},n\in \N \quad \text{and} \quad \om^{\ast,0}(z)=\om(z).
	$$ 
In the particular case $\om=1$, we will write  
$W^n = \om^{\ast,n}, \, n\in \N\cup\{0\}.$
The following formula relates $D^\mu(f)$ with the $n$th derivative of $f$.

	\begin{letterlemma}\cite[Corollary 2.2]{BelMorNikPel}\label{lemmaletter: expresión derivada fraccionaria}
		Let $\mu$ be a radial weight. Then 
		$$
		D^{\mu}(f)(z) = \sum_{j=0}^{n-1} \frac{\widehat{f}(j)}{\mu_{2j+1}}z^j + 4^n z^n R^{W^n,V_{\mu_+,n}}(f^{(n)})(z),\quad f\in A^1,z\in\D,n\in \N.
		$$
	\end{letterlemma}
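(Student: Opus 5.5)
This is a cited result (\cite[Corollary 2.2]{BelMorNikPel}), so the proof is an identity of Taylor coefficients. Both sides are analytic in $z$, and for $f\in A^1$ every integral involved converges absolutely on compact subsets. It therefore suffices to compute the $k$th Taylor coefficient of the right-hand side and show it equals $\widehat f(k)/\mu_{2k+1}$ for every $k$.

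For $k<n$ the sum gives exactly these coefficients, and the term $z^nR^{W^n,V_{\mu_+,n}}(f^{(n)})$ contributes nothing. For $k\ge n$, write $f^{(n)}(z)=\sum_{m\ge0}\frac{(m+n)!}{m!}\widehat f(m+n)z^m$. By the definition of $R^{\om,\nu}$, the $(m+n)$th coefficient of the second term is
$$
4^n\frac{(W^n)_{2m+1}}{(V_{\mu_+,n})_{2m+1}}\frac{(m+n)!}{m!}\widehat f(m+n).
$$
The lemma thus reduces to the moment identity
$$
4^n\frac{(m+n)!}{m!}\,(W^n)_{2m+1}\,\mu_{2m+2n+1}=(V_{\mu_+,n})_{2m+1},\qquad m\ge0.
$$

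I would prove this by computing both moments explicitly, using Fubini.

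\emph{Moments of $\om^\ast$.} For a radial weight $\om$,
$$
(\om^\ast)_x=\int_0^1 s\,\om(s)\int_0^s r^x\log\frac{s}{r}\,dr\,ds=\frac{\om_{x+2}}{(x+1)^2}.
$$
Iterating with $\om=1$ (so $\om_y=1/(y+1)$) gives
$$
(W^n)_{2m+1}=\frac{1}{(2m+2n+2)\prod_{j=1}^{n}(2m+2j)^2}.
$$

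\emph{Moments of $V_{\mu_+,n}$.} By the same Fubini step,
$$
(V_{\mu_+,n})_x=2\int_0^1 rV_{\mu_+,n-1}(r)\frac{r^{x+1}}{x+1}\,dr=\frac{2}{x+1}(V_{\mu_+,n-1})_{x+2},
$$
and $(\mu_+)_x=\mu_{x+1}/(x+1)$. Hence
$$
(V_{\mu_+,n})_{2m+1}=\frac{2^n\,\mu_{2m+2n+1}}{(2m+2n+1)\prod_{j=1}^{n}(2m+2j)}.
$$

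Substituting both formulas, the identity becomes the elementary check
$$
4^n\frac{(m+n)!}{m!}=2^n\prod_{j=1}^{n}(2m+2j),
$$
together with the remaining rational factors matching. I expect the main obstacle to be exactly this bookkeeping. The constants and index shifts (the factor $s$ and the $\log$ in $\om^\ast$, the factor $2r$ in $V$, and the $1/s$ in $\mu_+$) must be tracked precisely. If the factors do not match at first, I would recheck via $n=1$ directly: there, integrating by parts in the integral representation of Theorem~\ref{th:perala} together with \eqref{eq:mu+} gives the identity, and induction on $n$ extends it.

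Finally, justifying the coefficient computation for general $f\in A^1$ (not just polynomials) follows from Theorem~\ref{th:perala}, which shows that $R^{\om,\nu}$ acts coefficientwise with locally uniform convergence. Alternatively, one can use density of polynomials in $A^1$ together with the continuity of both sides under locally uniform convergence.
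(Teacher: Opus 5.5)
The paper gives no proof of this lemma; it only cites \cite{BelMorNikPel}. Your strategy is sound: compare Taylor coefficients and reduce to a moment identity proved by Fubini. However, the bookkeeping you identified as the risk does go wrong, and as written your final check fails. The error is the formula $(\mu_+)_x=\mu_{x+1}/(x+1)$. Fubini gives
\[
(\mu_+)_x=\int_0^1\frac{\mu(s)}{s}\left(\int_0^s r^x\,dr\right)ds=\frac{\mu_x}{x+1}.
\]
This is exactly what \eqref{eq:mu+} encodes: the multipliers of $R^{1,\mu_+}$ are $\frac{1}{(2k+2)(\mu_+)_{2k+1}}$, and these must equal $\frac{1}{\mu_{2k+1}}$. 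Iterating your (correct) recursion $(V_{\mu_+,n})_x=\frac{2}{x+1}(V_{\mu_+,n-1})_{x+2}$ then gives
\[
(V_{\mu_+,n})_{2m+1}=\frac{2^n}{\prod_{j=1}^{n}(2m+2j)}\,(\mu_+)_{2m+2n+1}=\frac{2^n\,\mu_{2m+2n+1}}{(2m+2n+2)\prod_{j=1}^{n}(2m+2j)}.
\]
The last denominator factor is $2m+2n+2$, not $2m+2n+1$. Your displayed formula is also inconsistent with your own $(\mu_+)_x$; it amounts to evaluating it at $x=2m+2n$ instead of $x=2m+2n+1$.

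With your formula, the two sides of the moment identity differ by the factor $\frac{2m+2n+1}{2m+2n+2}$. For instance, when $n=1$ the coefficient of $z^{m+1}$ on the right-hand side comes out as $\frac{2m+3}{(2m+4)\mu_{2m+3}}\widehat f(m+1)$. So the assertion that ``the remaining rational factors match'' is false as written. With the corrected formula, the factor $2m+2n+2$ cancels against the same factor in your (correct) expression for $(W^n)_{2m+1}$. The identity then reduces to $\prod_{j=1}^n(2m+2j)=2^n\frac{(m+n)!}{m!}$, which closes the proof.

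Once this is fixed, your argument proves slightly more than stated. In the paper $R^{\om,\nu}$ is defined as a coefficient multiplier, and the resulting series converges in $\D$ for every $f\in\H(\D)$ because $\mu_x\ge r^x\mug(r)$ for each $r\in(0,1)$. Hence the identity holds for all $f\in\H(\D)$. Your last paragraph, which appeals to Theorem~\ref{th:perala} or to density, is therefore unnecessary. The hypothesis $f\in A^1$ matters only when one wants the integral form of $R^{W^n,V_{\mu_+,n}}(f^{(n)})$, which is how the lemma is used later in Section~\ref{Sec4}.
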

	The following is a technical lemma that will be useful for our purposes.

	\begin{letterlemma}\cite[Lemma 2.3]{BelMorNikPel}\label{lema pesos tec}
		Let $\mu$ be a radial weight. Then
		\begin{itemize}
			\item[(i)] If $n\in\N$, then 
			\begin{equation}\label{eq: lema pesos cond 1}
				W^n(r) \asymp (1-r)^{2n},\quad \frac{1}{2}\leq r<1;
			\end{equation}
			\item[(ii)]
			$$
			\widehat{\mu_+}(r) \leq\mug(r) (1-r),\quad 0\leq r <1;
			$$
			\item[(iii)] If $n\in\N$, then
			\begin{equation}\label{eq: lema pesos cond 3}
			V_{\mu_+,n}(r) \lesssim \mug(r)(1-r)^n,\quad 0\leq r <1;
			\end{equation}
			\item[(iv)] If $\mu\in\DD$, then 
			$$
			\widehat{\mu_+}(r) \gtrsim \mug(r)(1-r),\quad 0\leq r <1;
			$$
			\item[(v)] If $\mu\in \DD$ and $n\in\N$, then
			\begin{equation}\label{eq: lema pesos cond 5}
				\widehat{V_{\mu_+,n}}(r) \gtrsim \mug(r)(1-r)^{n+1},\quad \frac{1}{2}\leq r <1.
			\end{equation}
		\end{itemize}
	\end{letterlemma}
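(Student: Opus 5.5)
The plan is to prove each item by a direct computation. Items (iii) and (v) go by induction on $n$ from the base cases (ii) and (iv), and item (i) goes by its own induction. Throughout, the only tools are Fubini's theorem, the monotonicity of $\mug$, and, in (iv)–(v), the doubling condition $\mug(r)\le C\mug(\frac{1+r}{2})$ defining $\DD$.

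\emph{Items (ii) and (iv).} By Fubini,
$$\widehat{\mu_+}(r)=\int_r^1\int_t^1\frac{\mu(s)}{s}\,ds\,dt=\int_r^1\mu(s)\frac{s-r}{s}\,ds.$$
Since $s-r\le s(1-r)$ for $r\le s$, we get $\frac{s-r}{s}\le 1-r$, and (ii) follows immediately. For (iv), restrict the integral to $s\ge\frac{1+r}{2}$. There $\frac{s-r}{s}\ge s-r\ge\frac{1-r}{2}$, so
$$\widehat{\mu_+}(r)\ge\frac{1-r}{2}\,\mug\Big(\frac{1+r}{2}\Big)\gtrsim(1-r)\mug(r)$$
by the definition of $\DD$.

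\emph{Item (i).} Write $W^n(r)=\int_r^1 sW^{n-1}(s)\log\frac sr\,ds$. For $\frac12\le r\le s<1$ we have $\log\frac sr\asymp s-r$ and $s\asymp1$. Inductively assume $W^{n-1}(s)\asymp(1-s)^{2n-2}$ for $s\ge\frac12$; the base case is $W^0=1$. Then
$$W^n(r)\asymp\int_r^1(1-s)^{2n-2}(s-r)\,ds=\frac{(1-r)^{2n}}{(2n-1)2n},$$
using the substitution $u=1-s$. This is valid because only values $s\ge r\ge\frac12$ enter.

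\emph{Item (iii).} Since $t\le1$, we have $V_{\mu_+,n}(r)\le 2\widehat{V_{\mu_+,n-1}}(r)$. For $n=1$, (ii) gives $V_{\mu_+,1}(r)\le 2\mug(r)(1-r)$. Assuming $V_{\mu_+,n-1}(t)\lesssim\mug(t)(1-t)^{n-1}$, we obtain
$$V_{\mu_+,n}(r)\lesssim\int_r^1\mug(t)(1-t)^{n-1}\,dt\le\mug(r)\frac{(1-r)^n}{n},$$
since $\mug$ is decreasing.

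\emph{Item (v).} Set $V_n=V_{\mu_+,n}$. For $t\ge\frac12$ we have $V_n(t)=2\int_t^1 sV_{n-1}(s)\,ds\ge\widehat{V_{n-1}}(t)$. The induction hypothesis is $\widehat{V_{n-1}}(t)\gtrsim\mug(t)(1-t)^{n}$ for $t\ge\frac12$; its base case $n-1=0$ is exactly (iv). Using $1-t\ge\frac{1-r}{2}$ and $\mug(t)\ge\mug(\frac{1+r}{2})$ on $[r,\frac{1+r}{2}]$, together with the doubling property, we get
$$\widehat{V_n}(r)\ge\int_r^{\frac{1+r}{2}}\widehat{V_{n-1}}(t)\,dt\gtrsim\int_r^{\frac{1+r}{2}}\mug(t)(1-t)^n\,dt\gtrsim\mug\Big(\frac{1+r}{2}\Big)(1-r)^{n+1}\gtrsim\mug(r)(1-r)^{n+1}.$$

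The only mildly delicate point is bookkeeping the range $r\ge\frac12$ in (i) and (v). This range is where $\log\frac sr\asymp s-r$ holds and where the factor $2s\ge1$ can be dropped. One must check that the inductions only ever evaluate the hypotheses at points $\ge r\ge\frac12$, which is the case in both arguments above.
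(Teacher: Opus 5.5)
The paper gives no proof of this lemma; it is quoted from \cite[Lemma 2.3]{BelMorNikPel}, so there is no in-paper argument to compare against. Your proposal is a correct and complete self-contained verification.

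The identity $\widehat{\mu_+}(r)=\int_r^1\mu(s)\frac{s-r}{s}\,ds$ (Tonelli) gives (ii) directly. Combined with $\frac{s-r}{s}\ge\frac{1-r}{2}$ for $s\ge\frac{1+r}{2}$ and the doubling condition, it gives (iv).

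Your inductions for (i), (iii) and (v) are set up correctly. In (i) and (v) the inductive hypotheses are only evaluated at points $s\ge r\ge\frac12$. That is exactly where $\log\frac{s}{r}\asymp s-r$ holds and where $2s\ge 1$ lets you bound $V_{\mu_+,n}(t)$ below by $\widehat{V_{\mu_+,n-1}}(t)$.
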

\section{Boundedness and compactness}\label{Sec3}
We first characterize the boundedness and compactness of $V_{\mu,g}:A^p_\a\to A^q_\a$ for $\a\geq -1$. The following lemma reduces these questions to known embedding problems. Recall that, for $0<p,q<\infty$ and a positive Borel measure $\nu$ on $\D$, the tent space $T_p^q(\nu)$ consists of complex-valued measurable functions $f$ on $\D$ such that 
	$$
	\|f\|_{T_{p}^{q}(\nu)}
	=  \left(\int_{\T}\left(\int_{\Gamma(\z)}|f(z)|^p \frac{d\nu(z)}{1-|z|}\right)^{\frac{q}{p}}\,|d\z|\right)^{\frac1q}<\infty,
    $$
where $\Gamma(\z)=\left\{z\in\D:|\arg z-\arg\z|<1-|z|\right\}$ is the non-tangential approach region with vertex $\z\in\T=\partial\D$.

\begin{lemma}\label{Lemma: Vmu to embed}
        Let $0<p,q<\infty$, $\a>-1$, $\mu\in\DDD$ and $g\in\H(\D)$. Define the measures
        $$d\mu_{g,q,\a}(z) = \abs{D^\mu(g)(z)}^q\mug(z)^q(1-|z|)^\a dA(z)$$
        and $d\mu_{g}(z)=\abs{D^\mu(g)(z)}^2\frac{\mug(z)^2}{1-|z|}dA(z)$. Then 
        \begin{itemize}
            \item[(i)] $V_{\mu,g}:H^p\to H^q$ is bounded (respectively, compact) if and only if $Id:H^p\to T_{2}^{q}(\mu_{g})$ is bounded (respectively, compact). Moreover, $$\nm{V_{\mu,g}}_{H^p\to H^q}\asymp \nm{Id}_{H^p\to T_{2}^{q}(\mu_{g})};$$
            \item[(ii)] $V_{\mu,g}:A^p_\a\to A^q_\a$ is bounded (respectively, compact) if and only if $Id:A^p_\a\to L^q(\mu_{g,q,\a})$ is bounded (respectively, compact). Moreover, $$\nm{V_{\mu,g}}_{A^p_\a\to A^q_\a}\asymp \nm{Id}_{A^p_\a\to L^{q}(\mu_{g,q,\a})}.$$
            
        \end{itemize}
    \end{lemma}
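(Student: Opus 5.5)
The plan is to reduce everything to the classical Littlewood--Paley type norm descriptions of $H^q$ and $A^q_\a$, applied to $V_{\mu,g}(f)$, whose appropriate fractional derivative is $f\cdot D^\mu(g)$. First, since $D^\mu I^\mu$ is the identity on $\H(\D)$, we have $D^\mu(V_{\mu,g}(f)) = f\cdot D^\mu(g)$ and $V_{\mu,g}(f)(0)=0$ after the normalization $\mu_1\widehat{f}(0)\widehat{g}(0)/\mu_1$; precisely, $\widehat{V_{\mu,g}f}(0)=f(0)\widehat{g}(0)$, a rank-one term that is harmless for boundedness and compactness. The key tool is a Littlewood--Paley formula for the fractional derivative, valid for $\mu\in\DDD$:
\begin{equation*}
\nm{F}_{A^q_\a}^q \asymp \int_\D \abs{D^\mu(F)(z)}^q\mug(z)^q(1-|z|)^\a\,dA(z) + \abs{F(0)}^q,\quad F\in\H(\D),
\end{equation*}
and, for Hardy spaces,
\begin{equation*}
\nm{F}_{H^q}^q \asymp \int_\T\left(\int_{\Gamma(\z)}\abs{D^\mu(F)(z)}^2\frac{\mug(z)^2}{(1-|z|)^2}\,dA(z)\right)^{q/2}|d\z| + \abs{F(0)}^q.
\end{equation*}
I would derive these from the $n$-th derivative versions (Calderón's area function for $H^q$ and \eqref{eq: L-P estimate} iterated for $A^q_\a$) using Lemma~\ref{lemmaletter: expresión derivada fraccionaria} together with Theorem~\ref{th:perala}: writing $D^\mu F$ as $R^{W^n,V_{\mu_+,n}}(F^{(n)})$, the kernel estimates from Lemma~\ref{lema pesos tec} (i), (iii), (v) give that $R^{W^n,V_{\mu_+,n}}$ behaves like multiplication by $(1-|z|)^{n}/\mug(z)$ in the sense of the relevant $L^q$/tent space norms, both directions. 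Presumably these formulas are available from \cite{BelMorNikPel} (they are what underlies Theorem~\ref{letterth: bella}), and I would cite them rather than reprove them.

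Granting these formulas, applying them to $F=V_{\mu,g}(f)$ gives, for part (ii),
\begin{equation*}
\nm{V_{\mu,g}f}_{A^q_\a}^q \asymp \int_\D \abs{f(z)}^q \abs{D^\mu(g)(z)}^q\mug(z)^q(1-|z|)^\a\,dA(z) + \abs{f(0)\widehat g(0)}^q = \nm{f}^q_{L^q(\mu_{g,q,\a})}+\abs{f(0)\widehat g(0)}^q,
\end{equation*}
and for part (i) the tent space quasi-norm $\nm{f}_{T^q_2(\mu_g)}^q$ (note $\frac{d\mu_g}{1-|z|}$ is exactly $\abs{D^\mu g}^2\mug^2(1-|z|)^{-2}dA$). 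Since $\abs{f(0)}\lesssim \nm{f}_{A^p_\a}$, the boundedness equivalence and norm comparability follow immediately, with the point evaluation term controlled by $\abs{\widehat g(0)}$, which in turn is bounded by the embedding norm (test $f\equiv1$ and use subharmonicity near the origin, where $\mug\asymp 1$).

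For compactness, I would use the characterization via bounded sequences converging uniformly on compacta to zero: for $0<p,q<\infty$, an operator of this type (continuous for the compact-open topology) is compact iff $\nm{Tf_k}\to0$ for every bounded $\{f_k\}$ with $f_k\to0$ locally uniformly. Both $V_{\mu,g}$ and $Id$ into $L^q(\mu_{g,q,\a})$ or $T_2^q(\mu_g)$ satisfy this criterion (for the identity one uses that $\mu_{g,q,\a}$ restricted to a compact disk is finite). The norm equivalence above, applied to $f_k$, then transfers the property, since the point-evaluation term $f_k(0)\to0$. The main obstacle is the fractional Littlewood--Paley formula, particularly the tent space version for $H^q$ with $q\le1$ and the lower bound direction, which needs $\mu\in\Dd$ via Lemma~\ref{lemma: caract dcheck}; if not directly citable, I would prove it through the $R^{W^n,V_{\mu_+,n}}$ representation with Schur-type/Forelli--Rudin kernel estimates on tent spaces.
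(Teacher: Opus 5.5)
Your proposal is correct and is essentially the paper's argument: the paper cites \cite[Theorem 1.1]{BelMorNikPel} for the Hardy case and \cite[Theorem 1]{PelRosa} for the Bergman case to get $\nm{V_{\mu,g}(f)}_{H^q}^q\asymp\nm{f}^q_{T^q_2(\mu_g)}$ and $\nm{V_{\mu,g}(f)}^q_{A^q_\a}\asymp\nm{f}^q_{L^q(\mu_{g,q,\a})}$ via $D^\mu(V_{\mu,g}f)=f\cdot D^\mu(g)$, and then transfers boundedness and compactness directly. Your extra term $\abs{f(0)\widehat{g}(0)}^q$ is harmless but unnecessary, because the cited formulas hold for all $F\in\H(\D)$ without a point-evaluation term (the integral already controls $F(0)=\mu_1 D^\mu(F)(0)$); your sequential compactness criterion simply spells out the compactness transfer that the paper leaves implicit.
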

    \begin{proof}
    $(i)$ follows by an application of \cite[Theorem 1.1]{BelMorNikPel}, showing that for every $\mu\in\DDD$ and $f,g\in\H(\D)$
    $$
    \nm{V_{\mu,g}(f)}_{H^q}^q \asymp \int_{\T}\left(\int_{\Gamma(\z)}|f(z)|^2 \abs{D^\mu(g)(z)}^2\frac{\mug(z)^2}{(1-|z|)^2}dA(z)\right)^{\frac{q}{2}}\,|d\z| = \nm{f}^q_{T_{2}^{q}(\mu_{g})},
    $$
    Thus, boundedness or compactness of $V_{\mu,g}: H^p\to H^q$ is equivalent to the corresponding properties of the identity $Id:H^p\to T_{2}^{q}(\mu_{g})$.
    
    The proof of $(ii)$ is similar. By \cite[Theorem 1]{PelRosa}, for every $\mu\in\DDD$ and $f,g\in\H(\D)$
    $$
    \nm{V_{\mu,g}(f)}_{A^q_\a}^q \asymp \int_\D \abs{f(z)}^q \abs{D^\mu(g)(z)}^q\mug(z)^q (1-|z|)^\a dA(z) = \nm{f}_{L^q(\mu_{g,q,\a})}^q.
    $$
    \end{proof}
    
    These embeddings have been studied by several authors; see \cite[Theorems 3 and 9]{PelRatSierraEmbedding} and \cite[Theorems 1.1 and 1.2]{LvPau}. We summarize the relevant results below. We write $\Delta(z,r)=\{\z\in\D : \abs{\frac{z-\z}{1-\overline{z}\z}}<r \}$ for the pseudohyperbolic disk with center $z\in\D$ and radius $0< r <1$.
\begin{lettertheorem}\label{thletter: emebedding bergman}
    Let $0<p,q<\infty$, $\a>-1$, and let $\nu$ be a positive Borel measure on $\D$ that is finite on compact subsets. Then
    \begin{itemize}
        \item[(a)] If $p\leq q$, then $Id:A^p_\a\to L^q_\nu$ is bounded if and only if $\sup_{a\in\D}\frac{\nu(S(a))}{(1-|a|)^{(\a+2)\frac{q}{p}}}<\infty$. Moreover,
        $$
        \nm{Id}_{A^p_\a\to L^q_\nu}^q \asymp \sup_{a\in\D}\frac{\nu(S(a))}{(1-|a|)^{(\a+2)\frac{q}{p}}}.
        $$
        \item[(b)] If $p\leq q$, then $Id:A^p_\a\to L^q_\nu$ is compact if and only if $\lim_{|a|\to 1^-}\frac{\nu(S(a))}{(1-|a|)^{(\a+2)\frac{q}{p}}}=0$. 
        \item[(c)] If $q<p$, then the following conditions are equivalent:
        \begin{itemize}
            \item[(ci)] $Id:A^p_\a\to L^q_\nu$ is bounded;
            \item[(cii)] $Id:A^p_\a\to L^q_\nu$ is compact;
            \item[(ciii)] There exists an $r\in(0,1)$ such that
            $$
            \int_\D \left (\frac{\nu(\Delta(z,r))}{(1-|z|)^{\a+2}} \right )^{\frac{p}{p-q}}(1-|z|)^\a dA(z) <\infty.
            $$
        \end{itemize}
        Moreover,
        $$
        \nm{Id}_{A^p_\a\to L^q_\nu}^{\frac{pq}{p-q}}\asymp \int_\D \left (\frac{\nu(\Delta(z,r))}{(1-|z|)^{\a+2}} \right )^{\frac{p}{p-q}}(1-|z|)^\a dA(z)
        $$
    \end{itemize}
\end{lettertheorem}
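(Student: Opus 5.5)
This is the classical Carleson embedding theorem for weighted Bergman spaces: (a) and (b) are due to Hastings and Luecking, and (c) to Luecking. I would prove all three parts from two tools.

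\emph{Tool 1 (subharmonicity).} For a fixed $r\in(0,1)$ and all $0<t<\infty$,
$$
\abs{f(z)}^t\lesssim \frac{1}{(1-|z|)^{\a+2}}\int_{\Delta(z,r)}\abs{f(\z)}^t(1-|\z|)^\a\,dA(\z),\quad z\in\D.
$$
\emph{Tool 2 (test functions).} For $\g>0$ large enough, the functions
$$
f_a(z)=\frac{(1-|a|^2)^{\g-\frac{\a+2}{p}}}{(1-\overline{a}z)^{\g}},\quad a\in\D,
$$
satisfy $\nm{f_a}_{A^p_\a}\asymp 1$ and $\abs{f_a}\asymp(1-|a|)^{-\frac{\a+2}{p}}$ on $S(a)$.

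For (a), necessity comes from Tool 2:
$$
\nu(S(a))(1-|a|)^{-(\a+2)\frac qp}\lesssim \int_{S(a)}\abs{f_a}^q\,d\nu\le\nm{Id}^q.
$$
For sufficiency, I would first pass from Carleson squares to pseudohyperbolic disks, $\nu(\Delta(z,r))\lesssim K(1-|z|)^{(\a+2)\frac qp}$, where $K$ is the supremum in (a). Apply Tool 1 with $t=p$ and raise to the power $q/p$. Split the exponent as $\frac qp=1+\left(\frac qp-1\right)$, and bound the extra factor $\left(\int_{\Delta}\abs{f}^p\,dA_\a\right)^{\frac qp-1}$ by $\nm{f}_{A^p_\a}^{q-p}$. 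Integrating against $\nu$ and applying Fubini (using $\z\in\Delta(z,r)\iff z\in\Delta(\z,r)$ and $1-|z|\asymp1-|\z|$ there) gives
$$
\int_\D\abs{f}^q\,d\nu\lesssim \nm{f}^{q-p}_{A^p_\a}\int_\D\abs{f(\z)}^p\,\frac{\nu(\Delta(\z,r))}{(1-|\z|)^{(\a+2)\frac qp}}(1-|\z|)^\a\,dA(\z)\lesssim K\nm{f}_{A^p_\a}^q .
$$

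For (b), necessity follows because $f_a\to0$ uniformly on compact sets as $|a|\to1^-$, and a compact $Id$ sends such bounded sequences to norm-null ones. For sufficiency, split $\D$ into $\{|z|\le\rho\}$ and its complement. On the disk $\{|z|\le\rho\}$, uniform convergence on compacta handles the integral. On the annulus, the estimate from (a) applied to $\nu|_{\{|z|>\rho\}}$ has a constant that is small, by the vanishing hypothesis.

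For (c), sufficiency uses Tool 1 with $t=q$, integration against $\nu$, and Fubini:
$$
\int_\D\abs{f}^q\,d\nu\lesssim\int_\D\abs{f(\z)}^q\,\frac{\nu(\Delta(\z,r))}{(1-|\z|)^{\a+2}}(1-|\z|)^\a\,dA(\z).
$$
Hölder with exponents $p/q$ and $p/(p-q)$ then gives the bound, with the stated power of the norm. Compactness comes for free: the tail of the convergent integral in (ciii) over $\{|\z|>\rho\}$ tends to $0$, so the splitting argument from (b) applies. Hence (ciii)$\Rightarrow$(cii)$\Rightarrow$(ci).

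The main obstacle is (ci)$\Rightarrow$(ciii), for which I would use Luecking's randomization. Take an $r$-lattice $\{a_k\}$ and $\g$ large. The atomic decomposition gives
$$
\nm{\sum_k c_k\, r_k(t)\, f_{a_k}}_{A^p_\a}\lesssim\nm{\{c_k\}}_{\ell^p}
$$
for Rademacher functions $r_k$. Apply the boundedness of $Id$, integrate in $t$, and use Khinchine's inequality to bring in the square function $\left(\sum_k\abs{c_k}^2\abs{f_{a_k}}^2\right)^{q/2}$. Bounding this below by its restriction to the disjointly distributed $\Delta(a_k,r)$ yields
$$
\sum_k\abs{c_k}^q\,\frac{\nu(\Delta(a_k,r))}{(1-|a_k|)^{(\a+2)\frac qp}}\lesssim\nm{Id}^q\nm{\{c_k\}}_{\ell^p}^q .
$$
By $\ell^{p/q}$--$\ell^{p/(p-q)}$ duality, the sequence $\nu(\Delta(a_k,r))(1-|a_k|)^{-(\a+2)\frac qp}$ lies in $\ell^{\frac{p}{p-q}}$. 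Since each $\Delta(z,r)$ is covered by boundedly many lattice disks of a slightly larger radius, this discrete condition is equivalent to (ciii), with the norm equivalence.
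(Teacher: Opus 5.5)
The paper gives no proof of this statement. It quotes it from the literature, where it is obtained for more general weights, so there is no internal argument to compare with.

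Your sketch is the classical proof for standard weights, and it is correct:
\begin{itemize}
\item for $p\le q$ (Hastings--Luecking), the subharmonic mean-value estimate, Fubini over pseudohyperbolic disks, and the kernel test functions $f_a$;
\item for $q<p$ (Luecking), Rademacher randomization of the atomic-decomposition synthesis bound, Khinchine's inequality, and $\ell^{p/q}$--$\ell^{p/(p-q)}$ duality, followed by the passage from the lattice to $\Delta(z,r)$.
\end{itemize}
This makes the result self-contained for $\a>-1$ instead of relying on the general-weight theorems.

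Three small points need a line each when you write it out.
\begin{itemize}
\item The lattice disks $\Delta(a_k,r)$ are not disjoint; they only have bounded overlap. That is enough: if at most $N$ of the numbers $x_k=\abs{c_k}\abs{f_{a_k}(z)}\chi_{\Delta(a_k,r)}(z)$ are nonzero at each $z$, then
\[
\Bigl(\sum_k \abs{c_k}^2\abs{f_{a_k}(z)}^2\Bigr)^{q/2}\ge \max_k x_k^q\ge N^{-1}\sum_k x_k^q .
\]
\item In the quasi-Banach range ($p\le 1$ or $q<1$) there is no weak-convergence argument. For necessity in (b), extract an $L^q_\nu$-convergent subsequence of $f_{a_n}$, then a further $\nu$-a.e.\ convergent one. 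The pointwise limit is $0$, so the norms tend to $0$.
\item In the sufficiency part of (b), the squares $S(a)$ with $\abs{a}$ bounded away from $1$ are handled because $\nu$ is finite. This follows by covering $\D$ with a compact disk and finitely many squares $S(a_j)$ with $\abs{a_j}$ close to $1$. Hence $\nu(\{\abs{z}>\rho\})\to 0$ as $\rho\to 1^-$.
\end{itemize}
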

\begin{lettertheorem}\label{thletter: emebedding hardy tent}
    Let $0<p,q<\infty$, and let $\nu$ be a positive Borel measure on $\D$ that is finite on compact sets and satisfies $\nu(\{0\})=0$. Then
    \begin{itemize}
        \item[(a)] If $p\leq q$, then $Id:H^p\to T_{2}^{q}(\nu)$ is bounded if and only if $\sup_{a\in\D}\frac{\nu(S(a))}{(1-|a|)^{1+2(\frac{1}{p}-\frac{1}{q})}}<\infty$. Moreover,
        $$
        \nm{Id}_{H^p\to T_{2}^{q}(\nu)}^2 \asymp \sup_{a\in\D}\frac{\nu(S(a))}{(1-|a|)^{1+2(\frac{1}{p}-\frac{1}{q})}}.
        $$
        \item[(b)]If $p\leq q$, then $Id:H^p\to T_{2}^{q}(\nu)$ is compact if and only if $\lim_{|a|\to 1^-}\frac{\nu(S(a))}{(1-|a|)^{1+2(\frac{1}{p}-\frac{1}{q})}}=0$.  
        \item[(c)] If $q<p$, then the following conditions are equivalent:
        \begin{itemize}
            \item[(ci)] $Id:H^p\to T_{2}^{q}(\nu)$ is bounded;
            \item[(cii)] $Id:H^p\to T_{2}^{q}(\nu)$ is compact;
            \item[(ciii)] The following condition holds:
            $$
            B_\nu(\z) := \int_{\Gamma(\z)}\frac{d\nu(z)}{1-|z|} \in L^{\frac{qp}{2(p-q)}}(\T).
            $$
        \end{itemize}
        Moreover,
        $$
        \nm{Id}_{H^p\to T_{2}^{q}(\nu)}^{2}\asymp \nm{B_\nu}_{L^{\frac{qp}{2(p-q)}}(\T)}.
        $$
    \end{itemize}
\end{lettertheorem}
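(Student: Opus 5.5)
The statement is the known tent-space embedding theorem (\cite[Theorems 3 and 9]{PelRatSierraEmbedding}, \cite{LvPau}). Here is how I would prove it, with the three parts handled in order. Throughout, write $A_f(\z)=\int_{\Gamma(\z)}|f(z)|^2\frac{d\nu(z)}{1-|z|}$, so that $\nm{f}_{T^q_2(\nu)}^q=\int_\T A_f^{q/2}\,|d\z|$. Let $N f$ be the non-tangential maximal function over a slightly larger aperture than that of $\Gamma(\z)$. I will also use freely the standard fact that changing the aperture of $\Gamma$ changes the tent quasi-norm only by a constant factor.

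\emph{Sufficiency in (a) and (b).} First take $p=q$, so $\nu$ is a classical Carleson measure. For $p\ge 2$ I would dualize $L^{p/2}$ against $h\in L^{(p/2)'}(\T)$ and apply Fubini:
$$
\int_\T A_f h\,|d\z|\asymp\int_\D |f(z)|^2\,\frac{1}{1-|z|}\int_{\{\z:\,z\in\Gamma(\z)\}}h\,|d\z|\;d\nu(z).
$$
The inner average is at most $(Mh)(z)$, a Hardy–Littlewood type average. Carleson's theorem in non-tangential form, $\int_\D F\,d\nu\lesssim\int_\T \sup_{\Gamma(\z)}F\,|d\z|$, then bounds the whole expression by $\int_\T (Nf)^2\,M h$, and Hölder together with the boundedness of $N$ on $H^p$ and of $M$ on $L^{(p/2)'}$ finishes. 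For $p<2$ I would reduce to the case $p\ge 2$ via inner–outer factorization $f=BF$: $F^{p/2}\in H^2$, and write $|f|^2\le |F^{p/2}|^2|F|^{2-p}$, splitting off the extra factor by Hölder; alternatively, use a good-$\lambda$ stopping-time argument.

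For $p<q$, factor $f=BF$ with $F$ zero-free and $\nm{F}_{H^p}=\nm{f}_{H^p}$. Put $G=F^{p/q}\in H^q$, so $\nm{G}_{H^q}^q=\nm{f}_{H^p}^p$. The pointwise estimate $|F(z)|\lesssim \nm{f}_{H^p}(1-|z|)^{-1/p}$ gives
$$
|f(z)|^2\le |G(z)|^2|F(z)|^{2(1-\frac pq)}\lesssim \nm{f}_{H^p}^{2(1-\frac pq)}|G(z)|^2(1-|z|)^{-2(\frac1p-\frac1q)} .
$$
The measure $d\nu'=(1-|z|)^{-2(\frac1p-\frac1q)}d\nu$ is Carleson precisely under the hypothesis of (a). So the case $p=q$, applied with exponent $q$ to $G$ and $\nu'$, yields $\nm{f}_{T^q_2(\nu)}^2\lesssim \nm{f}_{H^p}^{2(1-p/q)}\nm{G}_{H^q}^{2}=\nm{f}_{H^p}^2$ times the Carleson constant of $\nu'$. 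For compactness, split $\nu=\nu\chi_{\{|z|\le r\}}+\nu\chi_{\{|z|>r\}}$. The first piece gives a compact embedding, since $H^p$-bounded sets are normal families. The tail has Carleson constant equal to $\sup_{|a|\ge r'}$ of the ratio in (b), which tends to $0$.

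\emph{Necessity in (a) and (b).} I would use the test functions $f_a(z)=\frac{(1-|a|^2)^{t}}{(1-\overline a z)^{t+1/p}}$ with $t$ large, so that $\nm{f_a}_{H^p}\asymp1$ and $|f_a|\asymp(1-|a|)^{-1/p}$ on $S(a)$. For $\z$ in an arc $I_a$ of length $\asymp 1-|a|$, a cone of enlarged aperture contains $S(a)$, and $\frac{1}{1-|z|}\ge\frac{1}{1-|a|}$ there. Hence
$$
\nm{f_a}^q_{T_2^q(\nu)}\gtrsim (1-|a|)\left(\frac{\nu(S(a))}{(1-|a|)^{1+2/p}}\right)^{q/2},
$$
which rearranges to the Carleson-type condition. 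Since $f_a\to0$ uniformly on compacta as $|a|\to1$, compactness forces $\nm{f_a}_{T^q_2(\nu)}\to0$, which gives the vanishing condition in (b).

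\emph{Part (c), $q<p$.} Sufficiency follows from $A_f\le (Nf)^2B_\nu$ and Hölder with exponents $\frac pq$ and $\frac{p}{p-q}$:
$$
\nm{f}^q_{T^q_2(\nu)}\le \nm{Nf}_{L^p}^q\,\nm{B_\nu}_{L^{\frac{qp}{2(p-q)}}}^{q/2}.
$$
Compactness follows the same way after truncating $\nu$ near $\T$, because $\nm{B_{\nu\chi_{\{|z|>r\}}}}\to0$ by dominated convergence. Since compact implies bounded, what remains is (ci)$\Rightarrow$(ciii), and this is the main obstacle. I would try a Luecking-type argument. Take an $r$-lattice $\{a_k\}$ and form random sums $f_t=\sum_k c_k r_k(t)f_{a_k}$, with Rademacher functions $r_k$ and atoms $f_{a_k}$ normalized as above. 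By the atomic/tent decomposition of $H^p$,
$$
\nm{f_t}_{H^p}\lesssim \Big\|\Big(\sum_k |c_k|^2\chi_{I_{a_k}}(1-|a_k|)^{-1}\Big)^{1/2}\Big\|_{L^p(\T)}.
$$
Averaging $\nm{f_t}^q_{T^q_2(\nu)}$ over $t$ with Khinchine's inequality, and keeping only the diagonal contribution near each $a_k$, gives
$$
\int_\T\Big(\sum_k |c_k|^2\chi_{\Gamma(\z)}(a_k)\frac{\nu(\Delta(a_k,r))}{(1-|a_k|)^{1+2/p}}\Big)^{q/2}|d\z| \lesssim \nm{Id}^q\,\Big\|\Big(\sum_k |c_k|^2\chi_{I_{a_k}}(1-|a_k|)^{-1}\Big)^{1/2}\Big\|_{L^p}^q .
$$
Reading this as a bounded multiplier between sequence tent spaces $T^p_2\to T^q_2$ and using the factorization/duality $T^{q}_2=T^{p}_2\cdot T^{s}_\infty$-type results (with $\frac1q=\frac1p+\frac1{2s}$, i.e. $2s=\frac{qp}{p-q}$), I expect to conclude that the discrete version of $B_\nu$ lies in $L^{\frac{qp}{2(p-q)}}$. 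A final covering argument then passes from the discrete version back to $B_\nu$ itself.
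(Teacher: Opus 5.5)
The paper gives no proof of this statement; it quotes it from \cite{PelRatSierraEmbedding} and \cite{LvPau}. Judged on its own, your sketch has genuine gaps in (a)--(b) and in (c).

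\emph{Sufficiency in (a) for $p<q$.} Your argument rests on the claim that $d\nu'=(1-|z|)^{-2\beta'}d\nu$, with $\beta'=\frac1p-\frac1q$, is Carleson exactly under the hypothesis of (a). Only the converse holds. For $d\nu=(1-|z|)^{2\beta'-1}dA$ one has $\nu(S(a))\asymp(1-|a|)^{1+2\beta'}$, yet $\nu'=(1-|z|)^{-1}dA$ has $\nu'(S(a))=\infty$. So your bound through $\nm{G}_{T^q_2(\nu')}$ is vacuous: the global growth estimate loses a fixed amount on every dyadic layer. What works is to use $|f(z)|\lesssim\min\{Nf(\z),\nm{f}_{H^p}(1-|z|)^{-1/p}\}$ on $\Gamma(\z)$, together with $\nu(\Gamma(\z)\cap\{1-|z|\asymp 2^{-j}\})\lesssim C_\nu 2^{-j(1+2\beta')}$, where $C_\nu$ is the supremum in (a). Splitting the sum over $j$ where the two bounds cross gives $A_f(\z)\lesssim C_\nu\,Nf(\z)^{2p/q}\nm{f}_{H^p}^{2-2p/q}$.

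\emph{Necessity in (a)--(b).} No cone of fixed aperture contains $S(a)$: its width at height $1-|z|$ is $\asymp 1-|z|$, while $S(a)$ keeps width $1-|a|$ down to $\T$. Your displayed lower bound is in fact false for $q<2$. For $\nu=\delta_w$ with $w\in S(a)$ and $1-|w|=\delta$, one gets $\nm{f_a}^q_{T^q_2(\nu)}\asymp\delta^{1-q/2}(1-|a|)^{-q/p}$, which is $(\delta/(1-|a|))^{1-q/2}$ times your right-hand side.

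For $q\ge 2$ you can repair it by Fubini, $\int_{3I_a}A_{f_a}\,|d\z|\gtrsim\int_{S(a)}|f_a|^2\,d\nu$, followed by Jensen. For $q<2$ the test functions only control $\nu$ of the top halves $\{z\in S(a):1-|z|\ge\frac{1-|a|}{2}\}$; these do lie in cones of slightly larger aperture with vertex in an arc of length $\asymp 1-|a|$. Summing over Whitney boxes recovers the box condition when $p<q$, because $1+2\beta'>1$. For $p=q<2$, however, every layer contributes $\asymp 1-|a|$ and the sum diverges: $d\nu=\frac{dA}{1-|z|}$ satisfies the local bound but is not Carleson.

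So $p=q<2$ needs an extra argument. For example:
\begin{itemize}
\item truncate $\nu$ to $\{|z|<\rho\}$, which does not increase $\nm{Id}$, so that its Carleson constant $K$ is finite;
\item pick $a$ with $\nu(S(a))\ge K(1-|a|)/2$ and set $\tilde A(\z)=\int_{\Gamma(\z)\cap S(a)}\frac{d\nu}{1-|z|}$;
\item interpolate between $\int_\T\tilde A\gtrsim K(1-|a|)$ and $\int_\T\tilde A^2\lesssim K^2(1-|a|)$ to get $\int_\T\tilde A^{p/2}\gtrsim K^{p/2}(1-|a|)$;
\item testing with $f_a$ then gives $K\lesssim\nm{Id}^2$.
\end{itemize}
The vanishing condition in (b) has the same issue.

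\emph{Part (c), (ci)$\Rightarrow$(ciii).} This cannot come from the diagonal terms. Put $m_k^2=\nu(\Delta(a_k,r))/(1-|a_k|)$ and $\lambda_k=c_k(1-|a_k|)^{-1/p}$. (With $f_{a_k}$ normalized in $H^p$, the weight in your atomic estimate must be $(1-|a_k|)^{-2/p}$, not $(1-|a_k|)^{-1}$; as written, a single atom violates it when $p<2$.)

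Your diagonal inequality then only says that $\{m_k\}$ multiplies the sequence tent space $T^p_2$ into $T^q_2$. Since $q<p$, every bounded sequence does this, by H\"older on $\T$. For $d\nu=\frac{dA}{1-|z|}$ one has $m_k\asymp 1$, so the diagonal inequality holds. Yet $B_\nu\equiv\infty$, and the embedding already fails for $f\equiv 1$. Hence no multiplier or factorization argument can extract from it the square sum along cones that $B_\nu$ represents.

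The missing information sits in the off-diagonal part $\Gamma(\z)\cap S(a_k)$ below $a_k$, where still $|f_{a_k}|\asymp(1-|a_k|)^{-1/p}$. Keeping it, Kahane--Khinchine gives
\[
\int_\T\left(\int_{\Gamma(\z)}\Lambda(z)\,\frac{d\nu(z)}{1-|z|}\right)^{\frac q2}|d\z|\lesssim\nm{Id}^q\,\nm{\{\lambda_k\}}^q_{T^p_2},\qquad \Lambda(z)=\sum_{k:\,z\in S(a_k)}|\lambda_k|^2 .
\]
From this (ciii) does follow. Work in a dyadic model and let $\beta(z)$ be the part of the cone integral of $\frac{d\nu}{1-|w|}$ lying above $z$. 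Choosing $\lambda$ with $\Lambda\asymp\beta^{q/(p-q)}$ yields $\int_\T(\beta^*)^s\lesssim\nm{Id}^q\left(\int_\T(\beta^*)^s\right)^{q/p}$, where $s=\frac{pq}{2(p-q)}$ and $\beta^*$ is a dyadic version of $B_\nu$. After a truncation making $\beta^*$ a priori integrable, this gives $\nm{B_\nu}_{L^s}\lesssim\nm{Id}^2$.

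The remaining steps are fine: sufficiency for $p=q$, and sufficiency and compactness in (c).
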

Combining these results with Lemma~\ref{Lemma: Vmu to embed} gives a first characterization of boundedness and compactness of $V_{\mu,g}$ when $p<q$, while for $q<p$ we prove Theorem~\ref{th: caract Vmu q<p} directly.

First, for the case $p<q$, we obtain the following proposition.
\begin{proposition}\label{propo: caract Vmu p<q}
     Let $0<p<q<\infty$, $\a\geq-1$, $\mu\in\DDD$ and $g\in\H(\D)$. Then 
    \begin{itemize}
        \item[(i)] $V_{\mu,g}:A^p_\a\to A^q_\a$ is bounded if and only if 
        $$
        \sup_{z\in\D}\abs{D^\mu(g)(z)}\mug(z)(1-|z|)^{(\a+2)(\frac{1}{q}-\frac{1}{p})}<\infty.
        $$
        Moreover,
        $$
        \nm{V_{\mu,g}}_{A^p_\a\to A^q_\a} \asymp \sup_{z\in\D}\abs{D^\mu(g)(z)}\mug(z)(1-|z|)^{(\a+2)(\frac{1}{q}-\frac{1}{p})};
        $$
        \item[(ii)] $V_{\mu,g}:A^p_\a\to A^q_\a$ is compact if and only if 
        $$
        \lim_{|z|\to 1^-}\abs{D^\mu(g)(z)}\mug(z)(1-|z|)^{(\a+2)(\frac{1}{q}-\frac{1}{p})}=0.
        $$
    \end{itemize}
\end{proposition}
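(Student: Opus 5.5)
The plan is to combine Lemma~\ref{Lemma: Vmu to embed} with the Carleson-embedding results for $p\le q$ and then remove the Carleson boxes using Lemma~\ref{lemma: esp crecimiento}, treating the Bergman case $\a>-1$ and the Hardy case $\a=-1$ separately. Throughout write $\b_0=\frac1p-\frac1q>0$ and $G=D^\mu(g)$; note that $(\a+2)(\frac1q-\frac1p)=-\b_0(\a+2)$, so the quantity in the statement is exactly the expression in Lemma~\ref{lemma: esp crecimiento}(ai) with $\b=\b_0$.

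\emph{Bergman case, $\a>-1$.} By Lemma~\ref{Lemma: Vmu to embed}(ii), boundedness (compactness) of $V_{\mu,g}:A^p_\a\to A^q_\a$ is equivalent to that of $Id:A^p_\a\to L^q(\mu_{g,q,\a})$, with comparable norms. Theorem~\ref{thletter: emebedding bergman}(a),(b) turns this into
$$
\sup_{a\in\D}\frac{\mu_{g,q,\a}(S(a))}{(1-|a|)^{(\a+2)\frac qp}}<\infty \quad\text{(respectively, } \lim_{|a|\to1^-}\cdots=0\text{)},
$$
with $\nm{V_{\mu,g}}^q$ comparable to the supremum. Since $\frac qp=1+q\b_0$ and $\mu_{g,q,\a}(S(a))=\int_{S(a)}|G|^q\mug^q(1-|z|)^\a\,dA$, this is precisely (aii) (respectively, (bii)) of Lemma~\ref{lemma: esp crecimiento} with $\g=q$ and $\b=\b_0$. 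That lemma then gives (i), including the norm equivalence after taking $q$-th roots, and (ii).

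\emph{Hardy case, $\a=-1$.} By Lemma~\ref{Lemma: Vmu to embed}(i) the question becomes the embedding $Id:H^p\to T^q_2(\mu_g)$. Since $\mu_g$ is absolutely continuous, $\mu_g(\{0\})=0$, so Theorem~\ref{thletter: emebedding hardy tent}(a),(b) applies and gives the condition
$$
\sup_{a}\frac{\int_{S(a)}|G|^2\mug^2(1-|z|)^{-1}\,dA}{(1-|a|)^{1+2\b_0}}<\infty,
$$
with $\nm{V_{\mu,g}}^2$ comparable to this supremum, and the analogous little-$o$ condition for compactness. This is Lemma~\ref{lemma: esp crecimiento}(aii),(bii) with $\a=-1$, $\g=2$ and $\b=\b_0$: indeed $(\a+2)(1+\g\b)=1+2\b_0$. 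The lemma explicitly allows $\a=-1$, so the conclusion follows exactly as before.

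The only point needing care is matching the exponents in each embedding theorem with the normalization of Lemma~\ref{lemma: esp crecimiento}, and checking that the subharmonicity step there is valid when $\a=-1$. It is: the weight $(1-|\z|)^{-1}$ is comparable to $(1-|z|)^{-1}$ on $D(z,\frac{1-|z|}4)$, and $\mug$ is essentially constant there because $\mu\in\DD$. Beyond that, the proof is a direct chain of the earlier results, so I expect no substantial obstacle.
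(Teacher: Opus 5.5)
Your proposal is correct and follows the paper's proof essentially step for step: Lemma~\ref{Lemma: Vmu to embed} reduces the problem to the embeddings of Theorems~\ref{thletter: emebedding bergman} and \ref{thletter: emebedding hardy tent} (parts (a),(b)), and Lemma~\ref{lemma: esp crecimiento} then converts the Carleson-box conditions into the pointwise condition, with $\b=\frac1p-\frac1q$ and either $\g=q$ (Bergman case) or $\a=-1$, $\g=2$ (Hardy case). Your exponent matching and your remark that the subharmonicity step also works for $\a=-1$ are both correct.
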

\begin{proof}
We prove $(i)$; the proof of $(ii)$ is similar. We treat the Hardy case and the Bergman cases separately. First, in the Hardy space, Lemma~\ref{Lemma: Vmu to embed} (i) and Theorem~\ref{thletter: emebedding hardy tent} (a) show that $V_{\mu,g}:H^p\to H^q$ is bounded if and only if
$$
\sup_{a\in\D} \frac{\int_{S(a)} \abs{D^\mu(g)(z)}^2\frac{\mug(z)^2}{1-|z|} dA(z)}{(1-|a|)^{1+2(\frac{1}{p}-\frac{1}{q})}}<\infty,
$$
and
$$
\nm{V_{\mu,g}}_{H^p\to H^q}^2 \asymp \sup_{a\in\D} \frac{\int_{S(a)} \abs{D^\mu(g)(z)}^2\frac{\mug(z)^2}{1-|z|} dA(z)}{(1-|a|)^{1+2(\frac{1}{p}-\frac{1}{q})}}.
$$
Now an application of Lemma~\ref{lemma: esp crecimiento} (a) with $G=D^\mu(g)$, $\a=-1$, $\b=\frac{1}{p}-\frac{1}{q}$ and $\gamma=2$ gives the desired result.

On the other hand, if $\a>-1$, by Lemma~\ref{Lemma: Vmu to embed} (ii) and Theorem~\ref{thletter: emebedding bergman} (a), 
$V_{\mu,g}:A^p_\a\to A^q_\a$ is bounded if and only if
$$
\sup_{a\in\D} \frac{\int_{S(a)}\abs{D^\mu(g)(z)}^q\mug(z)^q(1-|z|)^\a dA(z)}{(1-|a|)^{(\a+2)\frac{q}{p}}}<\infty,
$$
and
$$
\nm{V_{\mu,g}}_{A^p_\a \to A^q_\a}^q \asymp \sup_{a\in\D} \frac{\int_{S(a)}\abs{D^\mu(g)(z)}^q\mug(z)^q(1-|z|)^\a dA(z)}{(1-|a|)^{(\a+2)\frac{q}{p}}},
$$
then applying Lemma~\ref{lemma: esp crecimiento} (a) with $G=D^\mu(g)$, $\beta=\frac{1}{p}-\frac{1}{q}$ and $\gamma=q$ gives
$$
\nm{V_{\mu,g}}_{A^p_\a \to A^q_\a} \asymp \sup_{z\in\D} \abs{D^\mu(g)(z)}\mug(z)(1-|z|)^{(\a+2)(\frac{1}{q}-\frac{1}{p})}<\infty,
$$
so $(i)$ holds. The proof of $(ii)$ follows the same argument, using Theorem~\ref{thletter: emebedding hardy tent} (b) in the Hardy case or Theorem~\ref{thletter: emebedding bergman} (b) in the Bergman case, followed by Lemma~\ref{lemma: esp crecimiento} (b).
\end{proof}

To conclude this section we proceed to prove Theorem~\ref{th: caract Vmu q<p}.

\begin{Prf}{\em{  Theorem~\ref{th: caract Vmu q<p}}.}
We treat the Hardy and Bergman case separately. In the Hardy case, Lemma~\ref{Lemma: Vmu to embed} and Theorem~\ref{thletter: emebedding hardy tent} $(c)$ shows that $(i)$, $(ii)$ and the condition 
    $$
    \int_\T \left (\int_{\Gamma(\z)}\abs{D^\mu(g)(z)}^2\frac{\mug(z)^2}{(1-|z|)^2}dA(z) \right )^{\frac{qp}{2(p-q)}} |d\z|<\infty,
    $$
    are equivalent.
Now, since $s=\frac{pq}{p-q}$, \cite[Theorem 1.1]{BelMorNikPel} shows that this condition is equivalent to $g\in H^s$, and 
    $$
    \nm{V_{\mu,g}}_{H^p\to H^q} \asymp \left (\int_\T \left (\int_{\Gamma(\z)}\abs{D^\mu(g)(z)}^2\frac{\mug(z)^2}{(1-|z|)^2}dA(z) \right )^{\frac{s}{2}} |d\z|\right )^{\frac{1}{s}} \asymp \nm{g}_{H^s},
    $$
concluding the proof in the Hardy case.

If $\a>-1$, the equivalence between $(i)$ and $(ii)$ follows from Lemma~\ref{Lemma: Vmu to embed} and Theorem~\ref{thletter: emebedding bergman} (c). To prove that $(iii)$ implies $(i)$, assume that $g\in A^s_\a$. By \cite[Theorem 1]{PelRosa} and Hölder's inequality, for every $f\in A^p_\a$,
    \begin{equation*}
    \begin{split}
        \nm{V_{\mu,g}(f)}_{A^q_\a}^q &\asymp \int_\D \abs{f(z)}^q \abs{D^\mu(g)(z)}^q \mug(z)^q (1-|z|)^\a dA(z) \\
        &\leq \nm{f}_{A^p_\a}^q \left (\int_\D\abs{D^\mu(g)(z)}^s \mug(z)^s (1-|z|)^\a dA(z) \right )^{\frac{q}{s}} \asymp\nm{f}_{A^p_\a}^q \nm{g}_{A^s_\a}^q,
    \end{split}
    \end{equation*}
This proves $(i)$ holds and the estimate
    $$
     \nm{V_{\mu,g}}_{A^p_\a\to A^q_\a} \lesssim \nm{g}_{A^s_\a}.
    $$
    It remains to prove that $(i)$ implies $(iii)$. By  Lemma~\ref{Lemma: Vmu to embed} and Theorem~\ref{thletter: emebedding bergman} (c), condition $(i)$ implies that there exists $r\in(0,1)$ such that
    $$
    \int_\D \left (\frac{\int_{\Delta(z,r)}\abs{D^\mu(g)(\z)}^q\mug(\z)^q(1-|\z|)^\a dA(\z)}{(1-|z|)^{\a+2}} \right )^{\frac{p}{p-q}}(1-|z|)^\a dA(z)<\infty.
    $$
    Now, by \cite[Theorem 1]{PelRosa}, subharmonicity, and the identity $s=\frac{pq}{p-q}$,
    \begin{equation*}
        \begin{split}
            \nm{g}_{A^s_\a}^s &\asymp \int_\D \abs{D^\mu(g)(z)}^s \mug(z)^s (1-|z|)^\a dA(z) \\
            &\lesssim \int_\D \left (\frac{\int_{\Delta(z,r)}\abs{D^\mu(g)(\z)}^q\mug(\z)^q(1-|\z|)^\a dA(\z)}{(1-|z|)^{\a+2}} \right )^{\frac{p}{p-q}}(1-|z|)^\a dA(z)<\infty.
        \end{split}
    \end{equation*}
    Thus $g\in A^s_\a$, and
    $$
     \nm{g}_{A^s_\a} \lesssim \nm{V_{\mu,g}}_{A^p_\a\to A^q_\a},
    $$
    concluding the proof.
\end{Prf}
\section{Characterization of the Besov-Bloch space}\label{Sec4}
This section is devoted to the proof of Theorem~\ref{th: caract Bmu}. We first record a technical lemma that follows directly from Cauchy's formula.
\begin{lemma}\label{lemma: control f^n(0)}
    Let $f\in\H(\D)$, $n\in\N\cup \{0\}$, $\b>0$, and let $\mu$ be a radial weight. Then
    \begin{itemize}
        \item[(i)] There exists a constant $C(n,\b,\mu)>0$ such that
        $$
        \abs{f^{(n)}(0)} \leq C \nm{f}_{\Bb};
        $$
        \item[(ii)] There exists a constant $C(n,\b)>0$ such that
        $$
        \abs{f^{(n)}(0)} \leq C \nm{f}_{\B^{\b}}.
        $$
    \end{itemize}
\end{lemma}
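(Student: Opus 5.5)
The plan is to apply Cauchy's estimate on a circle of fixed radius, say $|\z|=\frac12$, and then use the supremum defining each norm at points of that circle.

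For (ii), first consider $n\ge n_\b$. By Cauchy's formula applied to $f^{(n_\b)}$,
$$
\abs{f^{(n)}(0)}\le (n-n_\b)!\,2^{\,n-n_\b}\max_{|\z|=1/2}\abs{f^{(n_\b)}(\z)}.
$$
On $|\z|=\frac12$ we have $(1-|\z|)^{n_\b-\b}=2^{\b-n_\b}$. Hence
$$
\max_{|\z|=1/2}\abs{f^{(n_\b)}(\z)}\le 2^{\,n_\b-\b}\sup_{z\in\D}\abs{f^{(n_\b)}(z)}(1-|z|)^{n_\b-\b}\le 2^{\,n_\b-\b}\nm{f}_{\B^\b}.
$$
The case $n<n_\b$ is immediate, because $\abs{f^{(n)}(0)}$ appears as a summand of $\nm{f}_{\B^\b}$. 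The resulting constant depends only on $n$ and $\b$.

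For (i), I work with $D^\mu(f)$, whose Taylor coefficients are $\widehat f(k)/\mu_{2k+1}$. This gives the identity
$$
f^{(n)}(0)=n!\,\widehat f(n)=n!\,\mu_{2n+1}\,\frac{(D^\mu f)^{(n)}(0)}{n!}.
$$
Cauchy's estimate on $|\z|=\frac12$ then yields
$$
\abs{f^{(n)}(0)}\le \mu_{2n+1}\,n!\,2^n\max_{|\z|=1/2}\abs{D^\mu(f)(\z)}.
$$
On that circle, $\mug(\z)=\mug(\tfrac12)>0$ and $(1-|\z|)^{-\b}=2^{\b}$. Therefore
$$
\max_{|\z|=1/2}\abs{D^\mu(f)(\z)}\le \frac{2^{-\b}}{\mug(1/2)}\nm{f}_{\Bb}.
$$
The final constant is $C=\mu_{2n+1}\,n!\,2^{n-\b}/\mug(\tfrac12)$, which depends only on $n$, $\b$ and $\mu$. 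It is finite because $\mu\in L^1$ and $\mug(\tfrac12)>0$ by the definition of a radial weight.

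The argument has no genuine obstacle. The only points needing care are that $\mug(\tfrac12)$ is strictly positive and that, in (ii), the case $n<n_\b$ is handled separately.
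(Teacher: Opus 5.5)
Your proof is correct and takes the same route as the paper, which only remarks that the lemma follows directly from Cauchy's formula. Your Cauchy estimate on the circle $|\zeta|=\frac12$, combined with the supremum defining each norm (and with $\widehat{\mu}(\frac12)>0$ for part (i)), is exactly that argument written out in full.
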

We establish the two embeddings between $\Bb$ and $\B^\b$ for $\b>0$ in separate propositions. The first holds for any radial weight.
\begin{proposition}\label{propo: Bmu>}
    Let $\beta>0$ and $\mu$ be a radial weight. Then
    \begin{itemize}
        \item[(i)] $\Bb\subset \B^{\b}$, and
        $$
        \nm{f}_{\B^{\b}} \lesssim \nm{f}_{\Bb},\quad f\in\H(\D);
        $$
        \item[(ii)] $\Bb_0\subset \B^{\b}_0$.
    \end{itemize}
\end{proposition}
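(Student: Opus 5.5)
The plan is to write $f$ as a fractional integral of $h=D^\mu(f)$ along radii, and then trade the weight $\mu$ for $\mug$ by an integration by parts. Set $h=D^\mu(f)$ and $n=n_\b$. The terms $\abs{f^{(k)}(0)}$, $0\le k\le n-1$, are already bounded by $C\nm{f}_{\Bb}$ by Lemma~\ref{lemma: control f^n(0)}(i). So (i) reduces to the pointwise estimate
$$
\abs{f^{(n)}(z)}(1-|z|)^{n-\b}\lesssim \nm{f}_{\Bb},\quad z\in\D .
$$
Since $\widehat f(k)=\mu_{2k+1}\widehat h(k)$ and $\mu_{2k+1}=\int_0^1 r^{2k+1}\mu(r)\,dr$, summing the series gives the radial representation
$$
f(z)=\int_0^1 h(r^2z)\,r\,\mu(r)\,dr,\qquad f^{(n)}(z)=\int_0^1 r^{2n+1}h^{(n)}(r^2z)\,\mu(r)\,dr .
$$
This holds for each fixed $z\in\D$, because $h$ and its derivatives are bounded on $\overline{D(0,|z|)}$, which justifies exchanging sum and integral.

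The naive route fails. Inserting $\abs{h(w)}\le\nm{f}_{\Bb}(1-|w|)^{\b}/\mug(w)$ together with a Cauchy estimate directly leads to $\int_0^1 (1-r)^\b(1-r|z|)^{-n}\frac{\mu(r)}{\mug(r)}\,dr$. This integral may diverge for a general radial weight, since $\mu/\mug=-(\log\mug)'$. This is the main obstacle, and I would avoid it by integrating by parts with $\mu(r)\,dr=-d\mug(r)$:
$$
f^{(n)}(z)=\Big[-\mug(r)r^{2n+1}h^{(n)}(r^2z)\Big]_0^1+\int_0^1\mug(r)\Big((2n+1)r^{2n}h^{(n)}(r^2z)+2r^{2n+2}z\,h^{(n+1)}(r^2z)\Big)dr .
$$
Both boundary terms vanish: the one at $r=0$ because of the factor $r^{2n+1}$, and the one at $r=1$ because $\mug(1^-)=0$ while $h^{(n)}$ is bounded on $D(0,|z|)$. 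The gain is that $\mug$ now appears with no $\mu$, and it can be cancelled exactly by the growth bound for $h$, evaluated on a circle where $\mug$ is not smaller than $\mug(r)$.

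For $1/2\le r<1$, I apply the Cauchy estimate for $h^{(m)}$, $m\in\{n,n+1\}$, at the point $r^2z$ on the circle $|w|=r$. This circle encloses $r^2z$ at distance at least $r-r^2|z|=r(1-r|z|)$, which gives
$$
\abs{h^{(m)}(r^2z)}\lesssim \frac{\max_{|w|=r}\abs{h(w)}}{(1-r|z|)^m}\le \nm{f}_{\Bb}\frac{(1-r)^\b}{\mug(r)(1-r|z|)^{m}} .
$$
For $0\le r<1/2$, I use the circle $|w|=3/4$ instead. There $\abs{h}\le\nm{f}_{\Bb}4^{-\b}/\mug(3/4)$, and $\mug(r)\le\mug(0)$, so this part contributes $O(\nm{f}_{\Bb})$ with a constant depending on $\mu$. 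Collecting both ranges,
$$
\abs{f^{(n)}(z)}\lesssim \nm{f}_{\Bb}\Big(1+\int_0^1\frac{(1-r)^\b}{(1-r|z|)^{n+1}}\,dr\Big)\lesssim \nm{f}_{\Bb}(1-|z|)^{\b-n},
$$
where the last step is the standard estimate valid because $n=n_\b>\b$. This proves (i).

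For (ii), I take $f\in\Bb_0$ and $\e>0$, and choose $r_0\in(1/2,1)$ such that $\abs{h(w)}\mug(w)(1-|w|)^{-\b}<\e$ for $|w|\ge r_0$. In the integrated-by-parts formula I split the integral at $r_0$. The piece over $[r_0,1)$ is bounded by $\e(1-|z|)^{\b-n}$ by the same computation as above. The piece over $[0,r_0)$ only involves the values of $h$ on $\overline{D(0,r_0)}$, so it is bounded by a constant $C(r_0,f)$ independent of $z$; after multiplying by $(1-|z|)^{n-\b}$ it tends to $0$ as $|z|\to1^-$. Hence $\limsup_{|z|\to1^-}\abs{f^{(n)}(z)}(1-|z|)^{n-\b}\lesssim\e$, so $f\in\B^\b_0$.
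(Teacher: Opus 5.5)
Your proof is correct, and it takes a genuinely different route from the paper's.

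The paper reads the identity $D^\mu(f)=\sum_{j<n}\frac{\widehat{f}(j)}{\mu_{2j+1}}z^j+4^nz^nR^{W^n,V_{\mu_+,n}}(f^{(n)})$ backwards. It then uses Theorem~\ref{th:perala} to write $f^{(n)}$ as an area integral of $D^\mu(f)$ against the kernel $B^{W^n}_z$ with weight $V_{\mu_+,n}$. The weight $\mug$ enters through the bound $V_{\mu_+,n}(r)\lesssim\mug(r)(1-r)^n$ of Lemma~\ref{lema pesos tec}(iii). The proof closes with the kernel estimate $\int_\D\abs{B^{W^n}_z(\z)}(1-|\z|)^{n+\b}\,dA(\z)\lesssim(1-|z|)^{\b-n}$ from \cite[Theorem 1]{PelRatLiouville}.

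You instead start from the radial formula $f(z)=\int_0^1 D^\mu(f)(r^2z)\,r\mu(r)\,dr$, which the paper only uses later, in Lemma~\ref{lemma: compact trick}. Your single integration by parts against $-d\mug$ does the job that the passage from $\mu$ to $\mu_+$ and $V_{\mu_+,n}$ does in the paper. Cauchy estimates on the circle $|w|=r$ then cancel $\mug(r)$ exactly, leaving only the elementary integral $\int_0^1(1-r)^{\b}(1-r|z|)^{-n-1}\,dr$. Your argument is therefore self-contained, needs no reproducing-kernel estimates, and makes it transparent why no regularity of $\mu$ is needed for this inclusion.

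The paper's approach buys uniformity. The same identity, read forwards and integrated against $B^{V_{\mu_+,n}}_z$, yields the reverse inclusion in Proposition~\ref{propo: Bmu<}. There $\mu\in\DD$ enters through the lower bound for $\widehat{V_{\mu_+,n}}$, and $\mu\in\Dd$ through the final radial integral. Your method does not obviously transfer to that direction, since $D^\mu$, unlike $I^\mu$, has no comparably simple radial-integral representation.
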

\begin{proof}
    To prove $(i)$, let $f\in \Bb$ and set $n=n_\b$. By Lemma~\ref{lemma: control f^n(0)} (i), it suffices to show that
    $$
    \sup_{z\in\D}\abs{f^{(n)}(z)}(1-|z|)^{n-\b} \lesssim \nm{f}_{\Bb},
    $$
    in fact, by the maximum modulus principle, it suffices to consider $|z|\geq \frac{1}{2}$.
    By Theorem~\ref{th:perala}, Lemma~\ref{lemmaletter: expresión derivada fraccionaria} and Lemma~\ref{lemma: control f^n(0)} (i), for every $|z|\geq \frac{1}{2}$,
    \begin{equation}\label{eq:lemma: fn control norma 1}
    \abs{f^{(n)}(z)} \lesssim \nm{f}_{\Bb}  + \int_\D \abs{D^\mu(f)(\z)}\abs{B^{W^n}_z(\z)} V_{\mu_+,n}(\z)dA(\z) ,
    \end{equation}
    so it remains to prove that 
    $$
     \int_\D \abs{D^\mu(f)(\z)}\abs{B^{W^n}_z(\z)} V_{\mu_+,n}(\z)dA(\z) \lesssim \frac{\nm{f}_{\Bb}}{(1-\abs{z})^{n-\b}}.
    $$
    First, by Lemma~\ref{lema pesos tec} (iii)
    \begin{equation}\label{eq:lemma: fn control norma 2}
        \begin{split}
     \int_\D \abs{D^\mu(f)(\z)}\abs{B^{W^n}_z(\z)} V_{\mu_+,n}(\z)dA(\z)
    &\lesssim \int_{\D} \abs{D^\mu(f)(\z)}\abs{B^{W^n}_z(\z)} \mug(\z) (1-|\z|)^n dA(\z) \\ &\leq \nm{f}_{\Bb} \int_{\D} \abs{B^{W^n}_z(\z)}  (1-|\z|)^{n+\b} dA(\z),
        \end{split}
    \end{equation}
    Lastly, by \cite[Theorem 1]{PelRatLiouville} and Lemma~\ref{lema pesos tec}
    \begin{equation}\label{eq:lemma: fn control norma 3}
        \begin{split}
    \int_{\D} \abs{B^{W^n}_z(\z)}  (1-|\z|)^{n+\b} dA(\z) &\lesssim \int_0^{|z|}\frac{(1-t)^{n+\b}}{\widehat{W^n}(t)}dt \asymp  \int_0^{|z|}\frac{dt}{(1-t)^{n-\b+1}} \lesssim \frac{1}{(1-|z|)^{n-\b}}
        \end{split}
    \end{equation}
    Combining \eqref{eq:lemma: fn control norma 1},\eqref{eq:lemma: fn control norma 2} and \eqref{eq:lemma: fn control norma 3} proves $(i)$.

    The proof of $(ii)$ follows a similar argument. Let $f\in\Bb_0\subset \Bb$, by \eqref{eq:lemma: fn control norma 1} and the fact that $n-\b>0$, it suffices to prove that
    $$
    \lim_{|z|\to 1^-} (1-\abs{z})^{n-\b}\int_\D \abs{D^\mu(f)(\z)}\abs{B^{W^n}_z(\z)} V_{\mu_+,n}(\z)dA(\z)  = 0,
    $$
    Given $\e>0$, choose $0<r_0<1$ such that
    $$
    \abs{D^\mu(f)(z)}\mug(z)(1-|z|)^{-\b}<\e,\quad r_0\leq |z|<1.
    $$
    For every $|z|>\frac{1}{2}$ by arguing as in \eqref{eq:lemma: fn control norma 2} and using \eqref{eq:lemma: fn control norma 3}, we obtain
    \begin{equation*}
      \begin{split}
    \int_\D \abs{D^\mu(f)(\z)}\abs{B^{W^n}_z(\z)} V_{\mu_+,n}(\z)dA(\z)  
    &\lesssim \int_{\D} \abs{D^\mu(f)(\z)}\abs{B^{W^n}_z(\z)} \mug(\z) (1-|\z|)^n dA(\z) \\
    &\leq\nm{f}_{\Bb}\int_{0\leq |\z|\leq r_0}\abs{B^{W^n}_z(\z)}(1-|\z|)^{n+\b}dA(\z) \\
    &+ \e\int_{|\z|\geq r_0} \abs{B^{W^n}_z(\z)} (1-|\z|)^{n+\b} dA(\z) \\
    &\leq C(r_0,\mu,n,\b)\nm{f}_{\Bb} + \e\int_{\D} \abs{B^{W^n}_z(\z)} (1-|\z|)^{n+\b} dA(\z) \\
    &\lesssim C(r_0,\mu,n,\b)\nm{f}_{\Bb} + \frac{\e}{(1-|z|)^{n-\b}}.
        \end{split}  
    \end{equation*}
    Since $n-\b>0$, there exists $\frac{1}{2}<r_1<1$ such that
    $$
    (1-|z|)^{n-\b}< \frac{\e}{C(r_0,\mu,n,\b)\nm{f}_{\Bb}},\quad |z|>r_1.
    $$
    It follows that
    $$
    (1-\abs{z})^{n-\b}\int_\D \abs{D^\mu(f)(\z)}\abs{B^{W^n}_z(\z)} V_{\mu_+,n}(\z)dA(\z) \lesssim \e,\quad |z|>r_1,
    $$
    obtaining $(ii)$.
\end{proof}

For the reverse embedding, additional assumptions on $\mu$ are necessary.
\begin{proposition}\label{propo: Bmu<}
    Let $\b>0$ and $\mu$ be a radial weight. Then the following are equivalent:
    \begin{itemize}
        \item[(i)] $\mu\in\DDD$ and $\beta < \b_\mu$;
        \item[(ii)] $\B^{\b}\subset \Bb$, and
        $$
        \nm{f}_{\Bb} \lesssim \nm{f}_{\B^{\b}},\quad f\in\H(\D);
        $$
        \item[(iii)] $\B^{\b}_0\subset \Bb_0$, and
        $$
        \nm{f}_{\Bb} \lesssim \nm{f}_{\B^{\b}},\quad f\in\B^{\b}_0.
        $$
    \end{itemize}
\end{proposition}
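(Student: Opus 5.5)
We prove (i)$\Rightarrow$(ii)$\Rightarrow$(iii)$\Rightarrow$(i). The implication (ii)$\Rightarrow$(iii) is immediate once we show that $\B^\b_0\subset\Bb_0$ under (i). For this, a function $f\in\B^\b_0$ is the limit in $\B^\b$ of its dilations $f_r$, or of polynomials. Polynomials lie in $\Bb_0$ because $\mug(z)(1-|z|)^{-\b}\to0$, which holds since $\b<\b_\mu$. The norm inequality in (ii) then passes the little-o condition to the limit, and $\Bb_0$ is closed in $\Bb$.

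For (i)$\Rightarrow$(ii), write $n=n_\b$. By Lemma~\ref{lemmaletter: expresión derivada fraccionaria} and Theorem~\ref{th:perala},
$$
|D^\mu(f)(z)|\lesssim \sum_{j<n}|\widehat f(j)|+\int_\D |f^{(n)}(\z)|\,|B^{V_{\mu_+,n}}_z(\z)|\,W^n(\z)\,dA(\z).
$$
The first sum is controlled by Lemma~\ref{lemma: control f^n(0)}(ii). For the integral we use $|f^{(n)}(\z)|\le\nm{f}_{\B^\b}(1-|\z|)^{\b-n}$ and $W^n(\z)\asymp(1-|\z|)^{2n}$, which reduces matters to bounding
$$
\int_\D |B^{V_{\mu_+,n}}_z(\z)|(1-|\z|)^{n+\b}\,dA(\z).
$$
We then apply the kernel estimate \cite[Theorem 1]{PelRatLiouville}; this needs $V_{\mu_+,n}\in\DDD$, which follows from $\mu\in\DDD$. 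With the lower bound Lemma~\ref{lema pesos tec}(v), and the matching upper bound, the integral is at most
$$
\int_0^{|z|}\frac{(1-t)^{n+\b}}{\widehat{V_{\mu_+,n}}(t)}\,dt\lesssim\int_0^{|z|}\frac{(1-t)^{\b-1}}{\mug(t)}\,dt .
$$
The \textbf{key point}, and the main obstacle, is to show this is $\lesssim (1-|z|)^{\b}/\mug(z)$, which is exactly what $\b<\b_\mu$ gives. Choose $\eta$ with $\b<\eta<\b_\mu$. Then $\mug(t)(1-t)^{-\eta}\gtrsim\mug(z)(1-|z|)^{-\eta}$ for $t\le|z|$, so the integrand is at most $(1-|z|)^{\eta}\mug(z)^{-1}(1-t)^{\b-\eta-1}$. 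Integrating gives $(1-|z|)^{\b}/\mug(z)$ since $\b-\eta<0$. Multiplying by $\mug(z)(1-|z|)^{-\b}$ yields (ii).

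For (iii)$\Rightarrow$(i), we test on explicit functions.

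First, $\mu\in\DD$. Take $f(z)=z^k$, for which $\nm{f}_{\B^\b}\asymp k^{n}\sup_r r^{k-n}(1-r)^{n-\b}\asymp k^{\b}$. We also have $\nm{f}_{\Bb}\ge \mu_{2k+1}^{-1}\mug(r)(1-r)^{-\b}r^k$ at $r=1-1/k$. This gives $\mug(1-1/k)\lesssim\mu_{2k+1}$, which is Lemma~\ref{lemma: caract Dgorro}(iv) in reverse; we expect it to yield the doubling property $\mu_x\lesssim\mu_{2x}$ via standard moment manipulations, following \cite{MorPelRosa}.

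Second, $\mu\in\Dd$ and $\b<\b_\mu$. Use $f_a$ with $f_a^{(n)}(z)=(1-|a|)^{\gamma}(1-\overline a z)^{-(n-\b+\gamma)}$, which are uniformly bounded in $\B^\b_0$. Since $D^\mu$ acts on $(1-\overline a z)^{-s}$ roughly like multiplication of the $n$-th Taylor coefficient by $1/\mu_{2n+1}\asymp1/\mug(1-1/n)$ (by $\DD$), evaluating $D^\mu f_a$ at $z=a$ gives lower bounds of the form
$$
\mug(a)\sum_k \frac{k^{\b+\gamma-1}(1-|a|)^{\gamma}|a|^{2k}}{\mug(1-1/k)}\lesssim (1-|a|)^{-\b}\cdot(1-|a|)^{\b}\asymp 1 .
$$
Restricting the sum to $k\le 1/(1-|a|)$ gives
$$
\int_0^{|a|}\frac{(1-t)^{\b-1}}{\mug(t)}\,dt\lesssim\frac{(1-|a|)^{\b}}{\mug(a)},
$$
the converse of the key estimate. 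A Gronwall-type argument in the spirit of Lemma~\ref{lemma: caract dcheck}(iii) then upgrades this to almost-decrease of $\mug(t)(1-t)^{-\eta}$ for some $\eta>\b$. This gives $\b<\b_\mu$, and $\b_\mu>0$ in turn forces $\mu\in\Dd$ by Lemma~\ref{lemma: caract dcheck}(ii).
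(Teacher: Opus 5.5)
\textbf{Verdict.} Your (i)$\Rightarrow$(ii) is the paper's argument. However, the first step of your (iii)$\Rightarrow$(i) has a genuine gap.

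\textbf{The gap: getting $\mu\in\DD$.} Evaluating $\nm{z^k}_{\Bb}$ only at $r=1-1/k$ leaves you with $\mug(1-1/k)\lesssim\mu_{2k+1}$. This inequality does not imply $\mu\in\DD$. Take $\mug(r)=e^{-1/(1-r)}$; here $\mug(r)/\mug(\frac{1+r}{2})=e^{1/(1-r)}$ is unbounded, so the weight is not doubling. A Laplace-type estimate (mass of $r^{2k}\mug(r)$ near $1-r\approx k^{-1/2}$) gives $\mu_{2k+1}\gtrsim e^{-C\sqrt{k}}$, which dominates $\mug(1-1/k)=e^{-k}$. So no moment manipulation can extract doubling from the single-point bound.

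\textbf{The fix.} Keep the monomial test for every $r$, i.e.\ $\mug(r)r^k\lesssim k^\b\mu_{2k+1}(1-r)^\b$ for $0\le r<1$, and integrate it. Since $\mu_x=x\int_0^1 r^{x-1}\mug(r)\,dr$, taking $x=\frac{3}{2}k+1$ gives $\mu_{\frac{3}{2}k+1}\lesssim k^{\b+1}\mu_{2k+1}\int_0^1 r^{k/2}(1-r)^\b\,dr\asymp\mu_{2k+1}$. Iterating this yields Lemma~\ref{lemma: caract Dgorro}(iii); this is what the paper does.

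\textbf{Why it matters.} Your second step needs $\mu\in\DD$ in several places:
\begin{itemize}
\item the identification $\mu_{2k+1}\asymp\mug(1-1/k)$;
\item the comparison of the sum with $\int_0^{|a|}\frac{(1-t)^{\b-1}}{\mug(t)}\,dt$;
\item the lower bound $\int_0^{r}\frac{(1-t)^{\b-1}}{\mug(t)}\,dt\gtrsim\frac{(1-r)^\b}{\mug(r)}$ used in the Gronwall upgrade.
\end{itemize}

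\textbf{The second step, once $\DD$ is available.} It is a legitimate alternative to the paper's. The paper tests on $F_N=\sum_{k\le N}z^k/k^{\b+1}$, obtains $\sum_{k\le N}\frac{1}{k^{\b+1}\mu_k}\lesssim\frac{1}{N^\b\mu_N}$, and extracts the extra $\e$ through a $\log(N/m)$ gain and iteration. Your route instead uses $\phi(r)\lesssim(1-r)\phi'(r)$ for $\phi(r)=\int_0^r\frac{(1-t)^{\b-1}}{\mug(t)}\,dt$. Integrating $\phi'/\phi$ gives $\phi(s)\gtrsim\left(\frac{1-t}{1-s}\right)^{1/C}\phi(t)$ directly, which is cleaner.

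\textbf{Bookkeeping error in the second step.} The coefficients are $\widehat{f_a}(k)\asymp(1-|a|)^\g k^{\g-\b-1}\overline{a}^{k-n}$, not $k^{\b+\g-1}$. With $\g>0$ the resulting estimate is the one with $\b-\g$ in place of $\b$, which only yields $\b-\g<\b_\mu$. Take $\g=0$, i.e.\ $f_a^{(n)}(z)=(1-\overline{a}z)^{\b-n}$. These functions lie in $\B^\b_0$ with $\nm{f_a}_{\B^\b}\le 1$.

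\textbf{(i)$\Rightarrow$(iii) and the logical cycle.} Your density proof of (i)$\Rightarrow$(iii) is correct and shorter than the paper's $\e$-splitting. As written, though, your (ii)$\Rightarrow$(iii) presupposes (i). You should observe, as the paper does, that the testing argument only uses the norm inequality on functions in $\B^\b_0$, so it also proves (ii)$\Rightarrow$(i).
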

\begin{proof}
   We first prove that (i) implies (ii). Let $0<\b<\b_\mu$ and $\mu \in \DDD$. There exists $\lambda>\beta$ for which Lemma~\ref{lemma: caract dcheck} (ii) holds with exponent $\eta=\lambda$, then
    \begin{equation}\label{eq: peso va a cero}
    \lim_{r\to 1^-} \mug(r)(1-r)^{-\b} \leq C \lim_{r\to 1^-} (1-r)^{\lambda-\b} =0,
    \end{equation}
    In particular $\{0\}\subsetneq \Bb_0\subset \Bb$. Set $n=n_\b$, by Lemma~\ref{lemmaletter: expresión derivada fraccionaria}, for every $f\in A^1$,
    \begin{equation}\label{eq: Dmu in terms fn}
        \abs{D^\mu(f)(z)} = \abs{\sum_{j=0}^{n-1} \frac{\widehat{f}(j)}{\mu_{2j+1}}z^j + 4^n z^n R^{W^n,V_{\mu_+,n}}(f^{(n)})(z)}.
    \end{equation}
    Let $f\in \B^{\b}$, by \eqref{eq: L-P estimate}
    $$
    \int_\D \abs{f(z)}dA(z) \asymp \sum_{j=0}^{n-1} \abs{f^{(j)}(0)} + \int_\D \abs{f^{(n)}(z)}(1-\abs{z})^n dA(z) \leq \nm{f}_{\B^\b}\left (1+\int_\D (1-|z|)^\b dA(z) \right ) <\infty.
    $$
    Thus $f\in A^1$. By Lemma~\ref{lemma: control f^n(0)}, for $z\in\D$,
    $$
    \abs{D^\mu(f)(z)} \lesssim \nm{f}_{\B^{\b}} + \abs{R^{W^n,V_{\mu_+,n}}(f^{(n)})(z)}.
    $$
    Hence, for every $z\in\D$,
    \begin{equation}\label{eq: mod argument final}
    \abs{D^\mu(f)(z)}\mug(z)(1-|z|)^{-\b} \lesssim\nm{f}_{\B^{\b}}\mug(z)(1-|z|)^{-\b} + \abs{R^{W^n,V_{\mu_+,n}}(f^{(n)})(z)}\mug(z)(1-|z|)^{-\b},
    \end{equation}
    so it remains to check that 
    $$
    \abs{R^{W^n,V_{\mu_+,n}}(f^{(n)})(z)} \lesssim \frac{(1-|z|)^\b}{\mug(z)} \nm{f}_{\B^{\b}}.
    $$
    First, by Theorem~\ref{th:perala} and Lemma~\ref{lema pesos tec}
    \begin{equation}\label{eq: mod argument final2}
        \begin{split}
        \abs{R^{W^n,V_{\mu_+,n}}(f^{(n)})(z)} &\leq \int_\D \abs{f^{(n)}(\z)} \abs{B^{V_{\mu_+,n}}_z(\z)} W^n(\z)dA(\z) \\
        &\lesssim \nm{f}_{\B^{\b}}\int_\D \abs{B^{V_{\mu_+,n}}_z(\z)} (1-|\z|)^{n+\b}dA(\z).
         \end{split}
    \end{equation}
    Since $\mu\in\DD$, \cite[Theorem 1]{PelRatLiouville}, Lemma~\ref{lema pesos tec} and Lemma~\ref{lemma: caract dcheck} (ii) yield, for $|z|>\frac{1}{2}$,
    \begin{equation}\label{eq: mod argument final3}
        \begin{split}
    \int_\D \abs{B^{V_{\mu_+,n}}_z(\z)} (1-|\z|)^{n+\b}dA(\z) &\lesssim \int_0^{|z|} \frac{(1-t)^{n+\b}}{\widehat{V_{\mu_+,n}}(t)}dt \lesssim \int_0^{|z|} \frac{dt}{\mug(t)(1-t)^{1-\b}} \\
    &\lesssim \frac{(1-|z|)^{\lambda}}{\mug(z)}\int_0^{|z|} \frac{dt}{(1-t)^{1+ (\lambda-\b)}} \lesssim \frac{(1-|z|)^\b}{\mug(z)}, 
        \end{split}
    \end{equation}
    obtaining (ii). To prove that $(i)$ implies $(iii)$, let $f\in\B^{\b}_0$. By \eqref{eq: mod argument final} and \eqref{eq: peso va a cero}, it suffices to show that
    $$
    \lim_{|z|\to 1^-} \abs{R^{W^n,V_{\mu_+,n}}(f^{(n)})(z)}\mug(z)(1-|z|)^{-\b}  = 0.
    $$
    Given $\e>0$, choose $0<r_0<1$ such that
    $$
    \abs{f^{(n)}(z)}(1-|z|)^{n-\b}<\e,\quad r_0\leq |z|<1.
    $$
    For $|z|>\frac{1}{2}$, arguing as in \eqref{eq: mod argument final2} and using \eqref{eq: mod argument final3}, we obtain
    \begin{equation*}
      \begin{split}
    \abs{R^{W^n,V_{\mu_+,n}}(f^{(n)})(z)} &\lesssim \nm{f}_{\B^{\b}}\int_{0\leq |\z|\leq r_0} \abs{B^{V_{\mu_+,n}}_z(\z)} (1-|\z|)^{n+\b}dA(\z) \\
    &+ \e \int_{|\z|\geq r_0} \abs{B^{V_{\mu_+,n}}_z(\z)} (1-|\z|)^{n+\b}dA(\z) \\
    &\leq C(r_0,\mu,n,\beta) \nm{f}_{\B^{\b}} + \e \int_\D \abs{B^{V_{\mu_+,n}}_z(\z)} (1-|\z|)^{n+\b}dA(\z)\\
    &\lesssim C(r_0,\mu,n,\beta) \nm{f}_{\B^{\b}} + \e  \frac{(1-|z|)^\b}{\mug(z)}.
        \end{split}  
    \end{equation*}
    On the other hand, by \eqref{eq: peso va a cero}, there exists $\frac{1}{2}<r_1<1$ such that
    $$
    \mug(z)(1-|z|)^{-\b}< \frac{\e}{C(r_0,\mu,n,\beta)\nm{f}_{\B^{\beta}}},\quad |z|>r_1
    $$
    then we conclude that
    $$
    \abs{R^{W^n,V_{\mu_+,n}}(f^{(n)})(z)}\mug(z)(1-|z|)^{-\b} \lesssim \e,\quad |z|>r_1,
    $$
    which proves $(iii)$.

To finish the proof, we show that $(ii)$ implies $(i)$. We first check that $\mu\in\DD$. Testing $(ii)$ on the monomials $z^m$, $m\in\N$, and writing $n=n_\b$, then
    $$
    \nm{z^m}_{{\B^\b}}= \begin{cases}
 m!& \text{if}\;m\leq n \\
\frac{m!}{(m-n)!}\left (\frac{m-n}{m-\b} \right )^{m-n}\left (\frac{n-\b}{m-\b} \right )^{n-\b} & \text{if}\;m>n
\end{cases}
    $$
    Now we notice that, as $m\to \infty$,
    $$
    \frac{m!}{(m-n)!}\asymp m^n,\quad \left (\frac{n-\b}{m-\b} \right )^{n-\b}\asymp m^{\b-n} \quad \text{and} \quad \lim_{m\to \infty} \left (\frac{m-n}{m-\b} \right )^{m-n}= e^{\b-n},
    $$
    and hence, as $m\to \infty$
    $$
    \nm{z^m}_{{\B^\b}} \asymp m^\b.
    $$
    Then, by (ii), for every $0\leq r <1$
    \begin{equation}\label{eq: necesidad 2}
    \mug(r)r^m\lesssim m^\b\mu_{2m+1}(1-r)^\b.
    \end{equation}
    Therefore, integrating by parts
    \begin{equation*}
    \begin{split}
    \mu_{\frac{3}{2}m+1} &= \left (\frac{3}{2}m+1 \right )\int_0^1 r^{\frac{3}{2}m}\mug(r)dr \lesssim\left (\frac{3}{2}m+1 \right )\int_0^1 r^{\frac{m}{2}}m^\b\mu_{2m+1}(1-r)^\b dr \\ &\asymp \mu_{2m+1}m^{\b+1} \int_0^1 r^{\frac{m}{2}}(1-r)^\b dr = \mu_{2m+1}m^{\b+1}  B\left (\frac{m}{2}+1,\b+1 \right )\\
    &= \mu_{2m+1}m^{\b+1}  \frac{\Gamma(\frac{m}{2}+1)\Gamma(\b+1)}{\Gamma(\frac{m}{2}+\b+2)},
    \end{split}
    \end{equation*}
    and by standard estimates when $m\to \infty$
    $$
    \frac{\Gamma(\frac{m}{2}+1)}{\Gamma(\frac{m}{2}+\b+2)} \asymp \left (\frac{m}{2}+1 \right )^{-(\b+1)} \asymp m^{-(\b+1)},
    $$
    combining this with the preceding estimate gives
    $$
    \mu_{\frac{3}{2}m+1} \lesssim \mu_{2m+1},
    $$
    and standard arguments show that $\mu\in\DD$.

    Next, we prove that there exist $\e>0$ and $C>0$ such that
    \begin{equation}\label{eq: momentos M}
    \mu_y\leq C \left ( \frac{x}{y} \right )^{\b+\e} \mu_x,\quad 1\leq x \leq y<\infty.
    \end{equation}

Consider the polynomials
$$
    F_N(z)=\sum_{k=1}^{N}\frac{z^k}{k^{\b+1}},
    \qquad N\geq2.
$$
Each $F_N$ belongs to $\B^\b_0$, and
\begin{equation}\label{eq: necessity test polynomial norms}
    \sup_{N\geq2}\nm{F_N}_{\B^\b}<\infty.
\end{equation}
Indeed, writing $n=n_\b>\b$, the derivatives
$F_N^{(j)}(0)$, $0\leq j<n$, are uniformly bounded.
On the other hand,
$$
    \abs{F_N^{(n)}(z)}
    \leq
    \sum_{k=n}^{N}
    \frac{k!}{(k-n)!}\frac{|z|^{k-n}}{k^{\b+1}} \leq \sum_{k=n}^{N}
    |z|^{k-n}k^{n-\b-1},
$$
For $|z|\leq\frac12$, we have $\abs{F_N^{(n)}(z)} \leq \sum_{k=0}^{\infty}2^{-k}<\infty$, while for $\frac{1}{2}<|z|<1$, 
$$
\abs{F_N^{(n)}(z)} \leq 2^n \sum_{k=n}^{\infty} |z|^k k^{n-\b-1} \lesssim \sum_{k=0}^{\infty} |z|^{k}\frac{\Gamma(k+n-\b)}{\Gamma(k+1)} \leq \Gamma(n-\b)(1-|z|)^{\b-n}
$$
where the last step uses the generalized binomial theorem: for $a>0$ and $0\leq r <1$
$$
\frac{1}{(1-r)^{a}} = \sum_{k=0}^\infty \frac{\Gamma(k+a)}{\Gamma(a)\Gamma(k+1)}r^k.
$$
Thus \eqref{eq: necessity test polynomial norms}. Applying (ii) to these polynomials gives, for every $0\leq r <1$,
$$
    (1-r)^{-\b}\mug(r)
    \sum_{k=1}^{N}
    \frac{r^k}{k^{\b+1}\mu_{2k+1}}
    \lesssim1.
$$
In particular, set $r_n=1-\frac{1}{n}$ for $n\geq 2$. Evaluating at $r_N$, applying Lemma~\ref{lemma: caract Dgorro} (iv), and using the fact that $r_N^k$ is uniformly bounded below for $1\leq k\leq N$, we obtain 
\begin{equation}\label{eq: necessity moment sum}
    \sum_{k=1}^{N}\frac{1}{k^{\b+1}\mu_k}
    \lesssim\frac{1}{N^\b\mu_N}.
\end{equation}
Evaluating \eqref{eq: necesidad 2} at $r_k$ and applying Lemma~\ref{lemma: caract Dgorro} (iii) and (iv) gives
\begin{equation}\label{eq: necessity inverse moments}
  \mu_k k^\b \lesssim \mu_m m^\b \left (1-\frac{1}{k} \right )^{-m} \lesssim  \mu_m m^\b,\quad 2\leq m < k <\infty. 
\end{equation}
Combining \eqref{eq: necessity moment sum} and \eqref{eq: necessity inverse moments}, we obtain, for $N>m\geq2$,
\begin{align*}
    \frac{1}{N^\b\mu_N}
    &\gtrsim
    \sum_{k=m}^{N}\frac{1}{k^{\b+1}\mu_k}\\
    &\gtrsim
    \frac{1}{m^\b\mu_m}
    \sum_{k=m}^{N}\frac1k\\
    &\geq  \frac{1}{m^\b\mu_m}\sum_{k=m}^{N}\int_k^{k+1} \frac{1}{x}dx \\
    &\geq \frac{\log(N/m)}{m^\b\mu_m}
\end{align*}
Consequently, there exists a constant $C_1>0$
such that
    $$
    \mu_N\leq
    \frac{C_1}{\log(N/m)}
    \left(\frac mN\right)^\b\mu_m,
    \qquad N>m\geq2.
    $$
Now choose $K\in\N\setminus\{1\}$, such that $C_1/\log K\leq\frac12$ and set
$\e=\log2/\log K>0$. Since $K^{-\e} = \frac{1}{2}$, taking $N=Km$ gives
$$
    \mu_{Km}\leq\frac12K^{-\b}\mu_m
    =K^{-(\b+\e)}\mu_m,
    \qquad m\geq2.
$$
Iterating this inequality gives
    $$\mu_{K^jm}\leq
    K^{-j(\b+\e)}\mu_m,
    \qquad j\geq0,\quad m\geq2.$$
Let $n\geq m\geq2$ be integers, and choose $j\in \N\cup\{0\}$ such that
$K^jm\leq n<K^{j+1}m$. Then $K^{-j}< K\frac{m}{n}$, and by the monotonicity of the moments
$$
    \mu_n\leq\mu_{K^jm}
    \leq K^{-j(\b+\e)}\mu_m
    \leq K^{\b+\e}
    \left(\frac mn\right)^{\b+\e}\mu_m,
$$
This proves \eqref{eq: momentos M} for any pair of integers $n\geq m\geq 2$. Monotonicity and standard arguments extend the estimate to all real $1\leq x\leq y <\infty$. Finally, since $\mu\in\DD$, Lemma~\ref{lemma: caract Dgorro} (iv) and \eqref{eq: momentos M} imply
$$
    \mug(s)\lesssim
    \left(\frac{1-s}{1-t}\right)^{\b+\e}\mug(t),\quad 0\leq t\leq s <1,
$$
so $\mu\in\Dd$ and $\b<\b_\mu$.

All the test functions used above belong to $\B^{\beta}_0$, so the same argument proves that $(iii)$ implies $(i)$, concluding the proof.
\end{proof}
\section{Essential norm}\label{Sec5}
In this section, we establish the distance formulas needed to estimate the essential norm of $V_{\mu,g}$ and the proofs of Theorems~\ref{th: norma esencial p=q} and \ref{th: caract Vmu p<q}.
\begin{lemma}\label{lemma: distancia Bb}
    Let $\b\geq 0$ and $\mu$ be a radial weight. If $f\in \Bb$, then  
        $$
        \dist(f,\Bb_0) \asymp \limsup_{|z|\to 1^-}\abs{D^\mu(f)(z)}\mug(z)(1-|z|)^{-\b}.
        $$
\end{lemma}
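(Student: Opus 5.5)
Write $L(f)=\limsup_{|z|\to 1^-}\abs{D^\mu(f)(z)}\mug(z)(1-|z|)^{-\b}$. The lower bound $L(f)\le \dist(f,\Bb_0)$ is immediate. For every $h\in\Bb_0$ we have $L(h)=0$, and $D^\mu$ is linear, so
$$
L(f)\le L(f-h)+L(h)=L(f-h)\le \nm{f-h}_{\Bb}.
$$
Taking the infimum over $h\in\Bb_0$ gives $L(f)\le\dist(f,\Bb_0)$. Note that $\nm{\cdot}_{\Bb}$ has no constant term, but this is harmless, since $\nm{\cdot}_{\Bb}$ is a genuine norm: $D^\mu(f)\equiv0$ forces $f\equiv 0$.

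For the upper bound I will use dilations. Set $f_r(z)=f(rz)$ for $0<r<1$ and aim to show that $f_r\in\Bb_0$ and
$$
\limsup_{r\to1^-}\nm{f-f_r}_{\Bb}\lesssim L(f).
$$
Since $D^\mu$ acts diagonally on Taylor coefficients, $D^\mu(f_r)(z)=D^\mu(f)(rz)$. This function is bounded on $\D$, because $D^\mu(f)$ is analytic on $\D$ and hence bounded on $\overline{D(0,r)}$. Whether $f_r\in\Bb_0$ then depends on $\mug(z)(1-|z|)^{-\b}\to0$. For $\b=0$ this holds trivially. For $\b>0$, however, it can fail for a general radial weight, for instance if $\mug(r)\asymp(1-r)^\b$. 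In that case $\Bb_0$ contains no nonzero function at all, because $|D^\mu(f)|$ would have to tend to $0$ at the boundary. I will handle this case separately. If $\Bb_0=\{0\}$, then $\dist(f,\Bb_0)=\nm{f}_{\Bb}$, and the required inequality becomes $\nm{f}_{\Bb}\lesssim L(f)$, which is not automatic. So the degenerate case must be split off, or the statement read as holding whenever $\mug(r)(1-r)^{-\b}\to0$; this is what \eqref{eq: peso va a cero} gives under $\mu\in\DDD$ with $\b<\b_\mu$. I will assume this decay in the main argument and flag the degenerate case as the main obstacle.

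Assuming the decay, fix $\e>0$ and choose $\rho$ with
$$
\abs{D^\mu(f)(z)}\mug(z)(1-|z|)^{-\b}<L(f)+\e,\qquad \rho\le|z|<1.
$$
Split $\sup_{z\in\D}\abs{D^\mu(f)(z)-D^\mu(f)(rz)}\mug(z)(1-|z|)^{-\b}$ into two regions.

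On $|z|\le\rho'$, with $\rho'$ close to $1$ chosen below, the difference tends to $0$ uniformly as $r\to1^-$. This holds because $D^\mu(f)$ is uniformly continuous on compact subsets of $\D$ and the weight is bounded there.

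On $|z|>\rho'$ the difference is at most $\abs{D^\mu(f)(z)}w(z)+\abs{D^\mu(f)(rz)}w(z)$, where $w(z)=\mug(z)(1-|z|)^{-\b}$. The first term is below $L(f)+\e$. For the second, I need to compare $w(z)$ with $w(rz)$.
\begin{itemize}
\item If $|rz|\ge\rho$, I use monotonicity of $\mug$, namely $\mug(z)\le\mug(rz)$, together with $(1-|z|)^{-\b}\le(1-r|z|)^{-\b}\cdot\frac{(1-r|z|)^\b}{(1-|z|)^{\b}}$. The extra factor is unbounded, so the plain comparison fails. Instead I will use the maximum modulus principle. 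Since $r<1$, the function $D^\mu(f)(r\cdot)$ is dominated on circles by $M_\infty(r|z|,D^\mu(f))$, so the bound reduces to controlling $M_\infty(s,D^\mu f)\,w(t)$ for $s=rt<t$.
\item For this I will pick $r$ depending on the region so that the comparison is possible; handling these ratios for a non-doubling weight is the main technical point.
\end{itemize}

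If this direct dilation comparison breaks down, my fallback is to replace $f_r$ by a Cesàro-type mean, or by a cutoff $D^\mu(h)=D^\mu(f)\chi$ realised through partial sums, and to use that $\Bb_0$ contains all polynomials once $w\to0$. The expected difficulty is the same: keeping $\sup_{|z|>\rho'}\abs{D^\mu(f-h)}w$ comparable to $L(f)$ up to a multiplicative constant, which is all the $\asymp$ in the statement requires.
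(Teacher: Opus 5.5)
Your lower bound and the overall dilation strategy coincide with the paper's, but there is a genuine gap: the upper bound is never proved.

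\textbf{The outer-region estimate.} On $|z|>\rho'$ you correctly reduce to bounding $M_\infty(s,D^\mu(f))\,w(t)$ with $s=rt<t$. You then look for a way to compare $w(rt)$ with $w(t)$, and propose choosing $r$ ``depending on the region''. This is unnecessary. It also does not make sense here, because you need a single function $f_r\in\Bb_0$ for the whole disc. The missing observation is that $M_\infty(\cdot,D^\mu(f))$ is nondecreasing, so
$$\abs{D^\mu(f)(rz)}\le M_\infty(r|z|,D^\mu(f))\le M_\infty(|z|,D^\mu(f)).$$
Since $w$ is radial, for every $0<r<1$ and $0<R<1$ this gives
$$\sup_{|z|>R}\abs{D^\mu(f-f_r)(z)}w(z)\le 2\sup_{|z|>R}\abs{D^\mu(f)(z)}w(z).$$
Combine this with your compact-part estimate. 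You get $\dist(f,\Bb_0)\le\limsup_{r\to1^-}\nm{f-f_r}_{\Bb}\le 2\sup_{|z|>R}\abs{D^\mu(f)(z)}w(z)$, and letting $R\to1^-$ gives $\dist(f,\Bb_0)\le 2L(f)$. No comparison of $\mug$ at different radii, no doubling, and no Ces\`aro or partial-sum fallback is needed. This is why the lemma holds for arbitrary radial weights.

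\textbf{The degenerate case.} You leave it open, but the statement covers every radial weight, and the same monotonicity settles it. Let $L_{\b,\mu}=\limsup_{r\to1^-}w(r)$. Your main argument covers $L_{\b,\mu}=0$. The complementary case is $L_{\b,\mu}>0$, which is broader than $\mug(r)\asymp(1-r)^\b$; an oscillating $w$ with $\liminf w=0$ also belongs to it.

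If $L_{\b,\mu}=\infty$, then $\Bb=\{0\}$. Indeed, if $D^\mu(f)\not\equiv0$, then $M_\infty(r,D^\mu(f))w(r)\ge M_\infty(s,D^\mu(f))w(r)$ for $r>s$, and the right-hand side is unbounded.

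If $0<L_{\b,\mu}<\infty$, then $w$ is bounded on $[0,1)$, being continuous with a finite $\limsup$ at $1$. Since $M_\infty(r,D^\mu(f))$ increases to $\nm{D^\mu(f)}_{H^\infty}$, you get $L(f)=L_{\b,\mu}\nm{D^\mu(f)}_{H^\infty}$ and
$$L_{\b,\mu}\nm{D^\mu(f)}_{H^\infty}\le\nm{f}_{\Bb}\le\nm{D^\mu(f)}_{H^\infty}\sup_{[0,1)}w.$$
Hence $\Bb_0=\{0\}$ and $\dist(f,\Bb_0)=\nm{f}_{\Bb}\le(\sup_{[0,1)} w/L_{\b,\mu})\,L(f)$. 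So the inequality you called ``not automatic'' does hold, and no extra decay hypothesis is required. This three-case split according to $L_{\b,\mu}$ is exactly how the paper proceeds.
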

\begin{proof}
    We distinguish three cases according to the value of 
    $$
    L_{\b,\mu} = \limsup_{r\to 1^-} \frac{\mug(r)}{(1-r)^\b}.
    $$
    If $L_{\b,\mu}=\infty$, then $\Bb=\Bb_0=\{0\}$ and the result is trivial, while if $0<L_{\b,\mu}<\infty$, then $\Bb=\{f\in\H(\D): D^\mu(f)\in H^\infty \}$ and $\Bb_0=\{0\}$, so
    $$
    \dist(f,\Bb_0) = \nm{f}_{\Bb}.
    $$
    We claim that $\nm{f}_{\Bb} \asymp \nm{D^{\mu}(f)}_{H^\infty}$. On the one hand,  
    $$
    \nm{f}_{\Bb} \leq \nm{D^{\mu}(f)}_{H^\infty}\sup_{0\leq r <1}\mug(r)(1-r)^{-\b},
    $$
    on the other hand, for fixed $0\leq s<1$ and every $s<r<1$,
    $$
    M_\infty(s,D^\mu(f)) \mug(r)(1-r)^{-\b} \leq M_\infty(r,D^\mu(f)) \mug(r)(1-r)^{-\b} \leq \nm{f}_{\Bb},
    $$
    Taking the limit superior as $r\to 1^-$ and then letting $s\to 1^-$ yields
    $$
    L_{\b,\mu}\nm{D^\mu(f)}_{H^\infty} \leq \nm{f}_{\Bb}.
    $$
    Combining these inequalities, we obtain
    $$
    \dist(f,\Bb_0)\asymp \nm{D^\mu(f)}_{H^\infty} = \limsup_{|z|\to 1^-} \abs{D^\mu(f)(z)}=\frac{1}{L_{\b,\mu}} \limsup_{r\to 1^-} M_\infty(r,D^\mu(f))\mug(r)(1-r)^{-\b}.
    $$
    Finally, suppose that $L_{\b,\mu}=0$, and let $f\in\Bb$. One inequality is immediate: for $h\in\Bb_0$ and $z\in\D$, the linearity of $D^\mu$ gives
    $$
    \abs{D^\mu(f)(z)}\mug(z)(1-|z|)^{-\b} \leq \nm{f-h}_{\Bb} + \abs{D^\mu(h)(z)}\mug(z)(1-|z|)^{-\b},
    $$
    and taking the limit superior gives
    $$
    \limsup_{|z|\to 1^-}\abs{D^\mu(f)(z)}\mug(z)(1-|z|)^{-\b} \leq \dist(f,\Bb_0).
    $$
    
    For the reverse inequality, note that $L_{\b,\mu}=0$ implies that $f_r(z)=f(rz)$ belongs to $\Bb_0$ for every $0\leq r <1$. Hence
    $$
    \dist(f,\Bb_0)\leq \nm{f-f_r}_{\Bb} = \sup_{z\in\D} \abs{D^\mu(f-f_r)(z)}\mug(z)(1-|z|)^{-\b}.
    $$
    Fix $0\leq R <1$. Since $D^\mu(f-f_r) = D^\mu(f)-D^\mu(f)_r$, we obtain that $D^\mu(f-f_r)$ converges to zero uniformly on compact subsets as $r\to 1^-$. Thus, 
    \begin{equation}\label{eq: expdist1}
    \lim_{r\to 1^-} \sup_{|z|\leq R} \abs{D^\mu(f-f_r)(z)}\mug(z)(1-|z|)^{-\b} = 0.
    \end{equation}
    For the exterior region first by the monotonicity of the integral means gives, for $R< s < 1$,
    $$
    M_\infty(s,D^\mu(f-f_r)) \leq M_\infty(s,D^\mu(f)) + M_\infty(rs,D^\mu(f)) \leq 2M_\infty(s,D^\mu(f)),
    $$
    therefore, for every $0\leq r <1$,
    $$
    \sup_{|z|> R} \abs{D^\mu(f-f_r)(z)}\mug(z)(1-|z|)^{-\b} \leq 2 \sup_{|z|> R} \abs{D^\mu(f)(z)}\mug(z)(1-|z|)^{-\b}.
    $$
    Combining this estimate with \eqref{eq: expdist1} and taking the limit superior as $r\to 1^-$ gives
    $$
    \dist(f,\Bb_0)\leq \limsup_{r\to 1^-} \nm{f-f_r}_{\Bb} \leq 2\sup_{|z|> R} \abs{D^\mu(f)(z)}\mug(z)(1-|z|)^{-\b},
    $$
    and the proof concludes by letting $R\to 1^-$.  
\end{proof}

To study the essential norm of $V_{\mu,g}$ on $H^p$, we introduce, for a radial weight $\mu$, the space $\BMOA_\mu$ of $g\in \H(\D)$ such that
 $$\|g\|^2_{\BMOA_\mu}=\sup_{a\in\D}\frac{\int_{S(a)}\abs{D^\mu(g)(z)}^2\frac{\mug(z)^2}{1-\abs{z}}dA(z)}{1-\abs{a}}<\infty$$ and $g\in\VMOA_\mu$ if  
 $$
 \lim_{|a|\to 1^-}\frac{\int_{S(a)}\abs{D^\mu(g)(z)}^2\frac{\mug(z)^2}{1-\abs{z}}dA(z)}{1-\abs{a}}=0
 $$ 
\begin{lemma}\label{lemma: distancia BMOAmu}
    Let $\mu$ be a radial weight. If $f\in \BMOA_\mu$, then
        $$\dist(f,\VMOA_\mu)\asymp\limsup_{|a|\to 1^-}\left ( \frac{\int_{S(a)}\abs{D^\mu(f)(z)}^2\frac{\mug(z)^2}{1-|z|}dA(z)}{1-|a|}\right )^{\frac{1}{2}}.
        $$
\end{lemma}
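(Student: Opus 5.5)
Write $\Phi_f(a)$ for the Carleson quotient
$$
\Phi_f(a)=\frac{\int_{S(a)}\abs{D^\mu(f)(z)}^2\frac{\mug(z)^2}{1-|z|}dA(z)}{1-|a|},
$$
so that $\nm{f}_{\BMOA_\mu}^2=\sup_{a\in\D}\Phi_f(a)$. We follow the scheme of Lemma~\ref{lemma: distancia Bb}. The lower estimate is immediate. For $h\in\VMOA_\mu$, Minkowski's inequality in $L^2\bigl(S(a),\mug^2(1-|z|)^{-1}dA\bigr)$ and the linearity of $D^\mu$ give
$$
\Phi_f(a)^{1/2}\le \Phi_{f-h}(a)^{1/2}+\Phi_h(a)^{1/2}\le \nm{f-h}_{\BMOA_\mu}+\Phi_h(a)^{1/2}.
$$
Taking $\limsup_{|a|\to1^-}$ and then the infimum over $h$ yields $\limsup_{|a|\to 1^-}\Phi_f(a)^{1/2}\le \dist(f,\VMOA_\mu)$.

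For the upper estimate, we first dispose of degenerate weights, as with $L_{\b,\mu}$ in Lemma~\ref{lemma: distancia Bb}. Consider
$$
\limsup_{|a|\to1^-}\frac{1}{1-|a|}\int_{S(a)}\frac{\mug(z)^2}{1-|z|}\,dA(z)\asymp \limsup_{r\to1^-}\frac{1}{1-r}\int_r^1\frac{\mug(s)^2}{1-s}\,ds=:L_\mu .
$$
If $L_\mu>0$, testing with nonzero constants and monomials shows that $\VMOA_\mu$ is trivial or very small. For instance, if $D^\mu(f)(0)\ne0$, then near the boundary $|D^\mu f|$ is not small on large sets, and $\Phi_f(a)\gtrsim$ const along a sequence. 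In this case we would argue, as in the case $0<L_{\b,\mu}<\infty$ of Lemma~\ref{lemma: distancia Bb}, that both sides are comparable to the norm. I expect this case analysis to be somewhat delicate. If it fails to be clean, the fallback is to work with the dilation argument below, which does not really need $L_\mu=0$ except to guarantee $f_r\in\VMOA_\mu$.

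In the main case, we check that $f_r(z)=f(rz)$ belongs to $\VMOA_\mu$. The function $D^\mu(f_r)=(D^\mu f)_r$ is bounded on $\D$, so $\Phi_{f_r}(a)\le\nm{D^\mu f}_{H^\infty(r\D)}^2\,(1-|a|)^{-1}\int_{S(a)}\mug^2(1-|z|)^{-1}dA(z)$, and this tends to $0$ when $L_\mu=0$. Hence $\dist(f,\VMOA_\mu)\le\nm{f-f_r}_{\BMOA_\mu}$. Fix $R<1$ and split the supremum over $a$ into $|a|\le R$ and $|a|>R$.

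For $|a|\le R$, the box $S(a)$ is large. Split $S(a)$ into $S(a)\cap\{|z|\le R'\}$ and $S(a)\cap\{|z|>R'\}$. On the inner piece $D^\mu(f-f_r)\to0$ uniformly as $r\to1^-$, and $1-|a|\ge 1-R$, so this piece tends to $0$. The outer piece is covered by finitely many boxes $S(b_j)$ with $|b_j|=R'$, the number depending only on $R'$, and each $S(b_j)$ has normalized quotient controlled by $\sup_{|b|>R'}\Phi_{f-f_r}(b)$. Thus everything reduces to controlling boxes near the boundary.

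For $|a|>R$, we need
$$
\limsup_{r\to1^-}\sup_{|a|>R}\Phi_{f_r}(a)\lesssim\sup_{|b|>R''}\Phi_f(b),
$$
with $R''\to1$ as $R\to1$. This is the main obstacle, since it requires comparing Carleson boxes of the dilated function with those of $f$. We would change variables $w=rz$, which maps $S(a)$ into a region contained in a bounded number of boxes $S(b)$ with $1-|b|\asymp 1-r|a|\ge 1-|a|$. We would then need $\mug(z)^2/(1-|z|)\lesssim \mug(w)^2/(1-|w|)$ for $w=rz$, using $|w|\le|z|$ and $1-|w|\ge 1-|z|$. Here $\mug$ is decreasing but $1/(1-|z|)$ goes the wrong way, so we would first try to show $\Phi_{f_r}(a)\lesssim\sup_b\Phi_f(b)$ restricted to $|b|\gtrsim|a|$, using the subharmonicity of $|D^\mu f|^2$ (as in Lemma~\ref{lemma: esp crecimiento}) to replace pointwise weights by averages. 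If that fails, we would replace $f_r$ by the standard approximants $f-h$ built from the Carleson-measure decomposition.

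Combining the two regions, letting $r\to1^-$ and then $R\to1^-$ gives $\dist(f,\VMOA_\mu)\lesssim\limsup_{|a|\to1^-}\Phi_f(a)^{1/2}$, which completes the plan.
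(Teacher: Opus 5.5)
You have a genuine gap exactly at the step you call the main obstacle; the lower bound, by contrast, is correct and is the paper's. The estimate $\sup_{|a|>R}\Phi_{f_r}(a)\lesssim\sup_{|b|>R''}\Phi_f(b)$ is never proved, and the route you sketch cannot work in the stated generality. The lemma assumes only that $\mu$ is radial. The substitution $w=rz$ needs $\mug(|w|/r)^2/(1-|w|/r)\lesssim \mug(|w|)^2/(1-|w|)$ on $rS(a)$, and this fails even for $\mu\in\DDD$. For example, if $\mug(t)=(1-t)^{1/4}$, then $\mug(t)^2/(1-t)=(1-t)^{-1/2}$ is increasing, and the ratio blows up as $|w|\to r$. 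Your fallbacks (``subharmonicity to replace pointwise weights by averages'', ``Carleson-measure approximants'') are not arguments.

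The missing idea is to average over rotations rather than move boxes radially. Since $w\mapsto D^\mu(f)(zw)$ is analytic on a neighbourhood of $\overline{\D}$, subharmonicity of its squared modulus at $w=r$ gives
\[
\abs{D^\mu(f_r)(z)}^2\le\frac{1}{2\pi}\int_0^{2\pi}\frac{1-r^2}{\abs{1-re^{-i\t}}^2}\,\abs{D^\mu(f)(ze^{i\t})}^2\,d\t .
\]
Integrate over $S(a)$ against the radial weight $\mug(z)^2/(1-|z|)$, then use Fubini and $e^{i\t}S(a)=S(e^{i\t}a)$. This gives $\Phi_{f_r}(a)\le\sup_{|b|=|a|}\Phi_f(b)$ for every $0\le r<1$. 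Hence $\sup_{|a|>R}\Phi_{f-f_r}(a)^{1/2}\le 2\sup_{|a|>R}\Phi_f(a)^{1/2}$, with the same $R$ and no comparison of weights at all. This is the paper's argument.

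Your preliminary case analysis is also miscomputed. Since $S(a)$ has angular width $\asymp 1-|a|$, one has $\frac{1}{1-|a|}\int_{S(a)}\frac{\mug(z)^2}{1-|z|}\,dA(z)\asymp\int_{|a|}^1\frac{\mug(s)^2}{1-s}\,ds$ for $|a|\ge\frac12$, without your extra factor $\frac{1}{1-r}$. The correct dichotomy has two cases:
\begin{itemize}
\item If $\int_0^1\frac{\mug(s)^2}{1-s}ds=\infty$, then $\Phi_f(0)=\infty$ for every $f\ne0$, because $M_2(s,D^\mu(f))$ is nondecreasing and positive for $0<s<1$. So $\BMOA_\mu=\VMOA_\mu=\{0\}$.
\item Otherwise every dilation $f_r$ belongs to $\VMOA_\mu$, which is all your dilation argument needs.
\end{itemize}
Your claim that $L_\mu>0$ makes $\VMOA_\mu$ trivial or very small is false. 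For $\mug(r)=(1-r)^{1/2}$, a weight in $\DDD$, your $L_\mu$ equals $1$, yet every polynomial lies in $\VMOA_\mu$. So your ``main case'' excludes weights for which the lemma has content.

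Your covering argument for $|a|\le R$ only closes if you cover $S(a)\cap\{|z|>R'\}$ by $\asymp (1-|a|)/(1-R')$ boxes $S(b_j)$. A count $\asymp(1-R')^{-1}$ independent of $a$ leaves a factor $(1-R)^{-1}$ in front of $\sup_{|b|\ge R'}\Phi_{f-f_r}(b)$. That supremum does not tend to zero when $f\notin\VMOA_\mu$, so the final limit $R\to1^-$ is ruined. The paper avoids this altogether: $\Phi_f(0)<\infty$ says $D^\mu(f)\in A^2_\om$ with $\om(z)=\mug(z)^2/(1-|z|)$, and dilations converge in norm in this radially weighted space. Hence $\sup_{|a|\le R}\Phi_{f-f_r}(a)\le(1-R)^{-1}\nm{D^\mu(f)-D^\mu(f)_r}_{A^2_\om}^2\to0$ as $r\to1^-$.
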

\begin{proof}
    The first inequality follows as in the previous lemma. If $f\in \BMOA_\mu$, then, for every $h\in \VMOA_\mu$,
    $$
    \left ( \frac{\int_{S(a)}\abs{D^\mu(f)(z)}^2\frac{\mug(z)^2}{1-|z|}dA(z)}{1-|a|}\right )^{\frac{1}{2}} \leq \nm{f-h}_{\BMOA_\mu} + \left( \frac{\int_{S(a)}\abs{D^\mu(h)(z)}^2\frac{\mug(z)^2}{1-|z|}dA(z)}{1-|a|}\right )^{\frac{1}{2}},
    $$
    taking the limit superior gives
    $$
    \limsup_{|a|\to 1^-}\left ( \frac{\int_{S(a)}\abs{D^\mu(f)(z)}^2\frac{\mug(z)^2}{1-|z|}dA(z)}{1-|a|}\right )^{\frac{1}{2}} \leq \dist(f,\VMOA_\mu).
    $$
    For the reverse inequality, we may assume that $\int_0^1 \frac{\mug(r)^2}{1-r}dr<\infty$: otherwise, $\BMOA_\mu=\VMOA_\mu=\{0\}$ and the result is trivial (see the proof of \cite[Proposition 3.1]{BelMorNikPel}). Thus 
    $$
    \lim_{|a|\to 1^-}\int_{|a|}^1 \frac{\mug(r)^2}{1-r}dr=0,
    $$
    so $f_r\in \VMOA_\mu$ for every $f\in\H(\D)$ and $0\leq r <1$. Consequently,
    $$
    \dist(f,\VMOA_\mu) \leq \nm{f-f_r}_{\BMOA_\mu} = \sup_{a\in\D}\left ( \frac{\int_{S(a)}\abs{D^\mu(f-f_r)(z)}^2\frac{\mug(z)^2}{1-|z|}dA(z)}{1-|a|}\right )^{\frac{1}{2}}.
    $$
    Fix $0\leq R <1$. For every $|a|\leq R$,
    $$
    \frac{\int_{S(a)}\abs{D^\mu(f-f_r)(z)}^2\frac{\mug(z)^2}{1-|z|}dA(z)}{1-|a|} \leq \frac{1}{1-R} \int_\D \abs{D^\mu(f-f_r)(z)}^2\frac{\mug(z)^2}{1-|z|}dA(z)
    $$
    Since $f\in \BMOA_\mu$, then $D^\mu(f)\in A^2_\om$, where $\om(z)=\frac{\mug(z)^2}{1-|z|}$. As $\om$ is radial, dilations converge in norm, and hence $\lim_{r\to 1^-}\nm{D^\mu(f-f_r)}_{A^2_\om}^2 = \lim_{r\to 1^-}\nm{D^\mu(f)-D^\mu(f)_r}^2_{A^2_\om}=0$. Therefore, for every $0\leq R<1$,
    $$
    \lim_{r\to 1^-} \sup_{|a|\leq R}\left ( \frac{\int_{S(a)}\abs{D^\mu(f-f_r)(z)}^2\frac{\mug(z)^2}{1-|z|}dA(z)}{1-|a|}\right )^{\frac{1}{2}} = 0.
    $$
    Consequently, for every $0<R<1$,
    \begin{equation}\label{eq:lemmadist2}
    \limsup_{r\to 1^-} \nm{f-f_r}_{\BMOA_\mu} \leq  \limsup_{r\to 1^-} \sup_{|a|>R}\left ( \frac{\int_{S(a)}\abs{D^\mu(f-f_r)(z)}^2\frac{\mug(z)^2}{1-|z|}dA(z)}{1-|a|}\right )^{\frac{1}{2}},
    \end{equation}
    to complete the proof, we show that 
    \begin{equation}\label{eq:lemmadist3}
    \sup_{|a|>R}\left ( \frac{\int_{S(a)}\abs{D^\mu(f-f_r)(z)}^2\frac{\mug(z)^2}{1-|z|}dA(z)}{1-|a|}\right )^{\frac{1}{2}}\leq 2\sup_{|a|>R}\left ( \frac{\int_{S(a)}\abs{D^\mu(f)(z)}^2\frac{\mug(z)^2}{1-|z|}dA(z)}{1-|a|}\right )^{\frac{1}{2}}.
    \end{equation}
    By the triangle inequality, it suffices to show that 
    \begin{equation}\label{eq:lemmadist4}
    \sup_{|a|>R}\left ( \frac{\int_{S(a)}\abs{D^\mu(f_r)(z)}^2\frac{\mug(z)^2}{1-|z|}dA(z)}{1-|a|}\right )^{\frac{1}{2}}\leq \sup_{|a|>R}\left ( \frac{\int_{S(a)}\abs{D^\mu(f)(z)}^2\frac{\mug(z)^2}{1-|z|}dA(z)}{1-|a|}\right )^{\frac{1}{2}}.
    \end{equation}
    By the Poisson integral representation,
    $$
    \abs{D^\mu(f_r)(z)}^2 \leq \frac{1}{2\pi} \int_0^{2\pi} \frac{1-r^2}{|1-re^{-i\t}|^2}\abs{D^\mu(f)(ze^{i\t})}^2d\t,
    $$
    Fubini's theorem and a change of variables give, for every $|a|>R$,
    \begin{equation*}
    \begin{split}
    \int_{S(a)}\abs{D^\mu(f_r)(z)}^2\frac{\mug(z)^2}{1-|z|}dA(z) &\leq \int_{S(a)}\left (\frac{1}{2\pi} \int_0^{2\pi} \frac{1-r^2}{|1-re^{-i\t}|^2}\abs{D^\mu(f)(ze^{i\t})}^2d\t \right )\frac{\mug(z)^2}{1-|z|}dA(z) \\
    &= \frac{1}{2\pi} \int_0^{2\pi} \frac{1-r^2}{|1-re^{-i\t}|^2} \left ( \int_{S(e^{i\t}a)}\abs{D^\mu(f)(z)}^2\frac{\mug(z)^2}{1-|z|}dA(z) \right ) d\t.
    \end{split}
    \end{equation*}
    Dividing by $1-|a|=1-|e^{i\t}a|$, taking the supremum over $|a|>R$, and using 
    $$
    \frac{1}{2\pi}\int_0^{2\pi} \frac{1-r^2}{|1-re^{-i\t}|^2}d\t=1,
    $$
    proves \eqref{eq:lemmadist4}. Combining this with \eqref{eq:lemmadist2} and \eqref{eq:lemmadist3}, we obtain, for every $0<R<1$
    $$
    \dist(f,\VMOA_\mu) \leq 2\sup_{|a|>R}\left ( \frac{\int_{S(a)}\abs{D^\mu(f)(z)}^2\frac{\mug(z)^2}{1-|z|}dA(z)}{1-|a|}\right )^{\frac{1}{2}},
    $$
    letting $R\to 1^-$ completes the proof.
\end{proof}
Analogous distance formulas hold for $\B^\b$ and $\BMOA$. The following statements can be proved using the same techniques as in the preceding lemmas; see also \cite{TjaniBloch}.
\begin{letterlemma}\label{lemma: distancia Bbn}
Let $\b\geq 0$ and $f\in\H(\D)$. Then
\begin{itemize}
    \item[(i)] If $f\in \B^\b$, then  
        $$
        \dist(f,\B^\b_0) \asymp \limsup_{|z|\to 1^-}\abs{f^{(n_\b)}(z)}(1-|z|)^{n_\b-\b};
        $$
    \item[(ii)] If $f\in \BMOA$, then
        $$
        \dist(f,\VMOA) \asymp \limsup_{|a|\to 1^-}\left ( \frac{\int_{S(a)}\abs{f'(z)}^2 (1-|z|)dA(z)}{1-|a|}\right )^{\frac{1}{2}}
        $$
\end{itemize}
\end{letterlemma}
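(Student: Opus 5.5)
The plan is to mimic the proofs of Lemma~\ref{lemma: distancia Bb} and Lemma~\ref{lemma: distancia BMOAmu}, replacing $D^\mu$ by the ordinary derivative $f^{(n_\b)}$ (respectively $f'$) and the weight $\mug(z)(1-|z|)^{-\b}$ by $(1-|z|)^{n_\b-\b}$ (respectively $1-|z|$). In both parts, the lower bound $\limsup \lesssim \dist$ is immediate. For any $h$ in the little space, the triangle inequality gives
$$
\abs{f^{(n_\b)}(z)}(1-|z|)^{n_\b-\b}\le \nm{f-h}_{\B^\b}+\abs{h^{(n_\b)}(z)}(1-|z|)^{n_\b-\b}.
$$
Letting $|z|\to1^-$ kills the last term, and taking the infimum over $h$ gives the bound. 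The same argument works for the Carleson-box quantity in (ii), using the triangle inequality in $L^2$ of the measure $(1-|z|)\,dA$ on $S(a)$. Note that since $n_\b-\b>0$, the weight tends to $0$. So we are always in the case $L_{\b,\mu}=0$ of Lemma~\ref{lemma: distancia Bb}, and no case distinction is needed.

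For the upper bound in (i), I would use the dilations $f_r(z)=f(rz)$. Each $f_r$ is analytic on a neighbourhood of $\overline{\D}$, so $f_r^{(n_\b)}$ is bounded, and $(1-|z|)^{n_\b-\b}\to0$ yields $f_r\in\B^\b_0$. Hence $\dist(f,\B^\b_0)\le\nm{f-f_r}_{\B^\b}$. The initial terms $\abs{(f-f_r)^{(k)}(0)}=(1-r^k)\abs{f^{(k)}(0)}$ tend to $0$. Fix $R<1$. On $|z|\le R$ the function $(f-f_r)^{(n_\b)}=f^{(n_\b)}-r^{n_\b}(f^{(n_\b)})_r$ tends to $0$ uniformly. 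On $|z|>R$ I use the maximum principle: $M_\infty(rs,f^{(n_\b)})\le M_\infty(s,f^{(n_\b)})$ and $r^{n_\b}\le1$, so the supremum over $|z|>R$ is at most $2\sup_{|z|>R}\abs{f^{(n_\b)}(z)}(1-|z|)^{n_\b-\b}$. Letting first $r\to1^-$ and then $R\to1^-$ gives $\dist\le 2\limsup$.

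For (ii), I follow Lemma~\ref{lemma: distancia BMOAmu} verbatim with $D^\mu(f)$ replaced by $f'$ and $\mug(z)^2/(1-|z|)$ by $(1-|z|)$. First, $f_r\in\VMOA$, being analytic across the boundary, and $\nm{f-f_r}_{\BMOA}$ includes $|f(0)-f_r(0)|=0$. For $|a|\le R$, I bound the box quantity by $(1-R)^{-1}\nm{f'-r(f')_r}^2_{A^2_{1}}$. This tends to $0$ because $f\in\BMOA\subset H^2$ gives $f'\in A^2_{1}$, where dilations converge in norm. For $|a|>R$, write $(f_r)'(z)=r f'(rz)$ and use the Poisson subordination inequality $\abs{f'(rz)}^2\le\frac1{2\pi}\int_0^{2\pi}P_r(\t)\abs{f'(ze^{i\t})}^2d\t$ together with the rotation invariance of Carleson boxes and Fubini. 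This bounds the box quantity of $f_r$ by $\sup_{|a|>R}$ of that of $f$.

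The main (mild) obstacle is this Poisson step, since one must check that $\abs{f'}^2$ is subharmonic so that the subordination applies, and that rotating $S(a)$ gives $S(e^{i\t}a)$. Both are already used in Lemma~\ref{lemma: distancia BMOAmu}. Everything else is routine.
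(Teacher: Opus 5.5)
Your proposal is correct and follows the route the paper intends. The paper gives no separate proof of this lemma; it only says these formulas follow by the same techniques as Lemmas~\ref{lemma: distancia Bb} and~\ref{lemma: distancia BMOAmu}. That is exactly what you carry out: dilations $f_r$, the triangle inequality, and, for (ii), Poisson subordination with rotation invariance of Carleson boxes, all with $f^{(n_\b)}$ or $f'$ in place of $D^\mu(f)$.

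One point worth making explicit: the paper defines $\VMOA$ as the closure of the polynomials in $\BMOA$. Your lower bound in (ii) therefore uses the standard fact that every element of this closure satisfies the vanishing Carleson-box condition. This holds because the set of such functions is closed in $\BMOA$ and contains the polynomials.
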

Now we begin the study of the essential norm. The following technical lemma is central to the proofs of the main results.
\begin{lemma}\label{lemma: compact trick}
    Let $\a\geq -1$, $0<p\leq q<\infty$, $\mu\in\DDD$ and $g\in\H(\D)$, such that $V_{\mu,g}:A^p_\a \to A^q_\a$ is bounded. If a family $\{f_a\}_{a\in\D}\subset A^p_\a$ satisfies
    $$
    \sup_{a\in\D}\nm{f_a}_{A^p_\a}\leq 1, \quad f_a\to 0 \text{ uniformly on compact subsets as $|a|\to 1^-$},
    $$
    then there exists a constant $C=C(q)>0$ such that
    $$
    \limsup_{|a|\to 1^-} \nm{V_{\mu,g}(f_{a})}_{A^q_\a} \leq C \nm{V_{\mu,g}}_{e,A^p_\a\to A^q_\a}.
    $$
\end{lemma}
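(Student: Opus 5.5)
The plan is the standard argument: a compact operator kills weakly null families. Fix any compact $K:A^p_\a\to A^q_\a$. Since $A^q_\a$ is a quasi-Banach space whose quasi-norm satisfies $\nm{x+y}^{\min(1,q)}\le \nm{x}^{\min(1,q)}+\nm{y}^{\min(1,q)}$, we get, for each $a\in\D$,
$$
\nm{V_{\mu,g}(f_a)}_{A^q_\a}\le C(q)\left(\nm{(V_{\mu,g}-K)(f_a)}_{A^q_\a}+\nm{K(f_a)}_{A^q_\a}\right)\le C(q)\left(\nm{V_{\mu,g}-K}+\nm{K(f_a)}_{A^q_\a}\right),
$$
with $C(q)=\max(1,2^{1/q-1})$. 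Taking $\limsup_{|a|\to1^-}$ and then the infimum over $K$ gives the claim once we show that
$$
\lim_{|a|\to1^-}\nm{K(f_a)}_{A^q_\a}=0
$$
for every compact $K$.

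To prove this limit, argue by contradiction. Suppose there are $a_j$ with $|a_j|\to1$ and $\nm{K(f_{a_j})}_{A^q_\a}\ge\delta>0$. By compactness, after passing to a subsequence, $K(f_{a_j})\to h$ in $A^q_\a$, so $\nm{h}_{A^q_\a}\ge\delta$. Since point evaluations are continuous on $A^q_\a$, $K(f_{a_j})\to h$ uniformly on compact sets as well. To reach a contradiction we must show $h=0$. This is where compactness alone is not enough in the non-locally-convex range, so we avoid the general operator $K$ and reduce to more concrete operators.

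The main step is a reduction: it suffices to take the infimum over a special class of compact operators. The natural choice is $K$ of the form $V_{\mu,g}\circ T_r$, or $K_r f=V_{\mu,g}(f_r)$ with $f_r(z)=f(rz)$. It is cleaner, though, to prove directly the one fact needed: for a compact $K$ and a bounded family $f_{a}\to0$ locally uniformly, $K(f_a)\to0$ in norm. For this we use that $A^p_\a$ (including $H^p$) has the following property: bounded sets that converge locally uniformly are, after passing to subsequences, weakly null when $p>1$. When $p\le1$, we instead write $f_{a_j}$ through the dilation decomposition $f=f_r+(f-f_r)$, so that $K$ compact gives $h=\lim K(f_{a_j})$. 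I expect this step to be the main obstacle, because a compact operator on a quasi-Banach space need not map locally-uniformly-null bounded sequences to null sequences without extra structure.

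My first attempt at the obstacle is the following. Note that $f_{a_j}\to0$ locally uniformly with $\sup_j\nm{f_{a_j}}\le1$ implies that $f_{a_j}$ tends to $0$ in the weak$^*$-type topology given by duality with polynomials. Using that the Taylor partial-sum (or Cesàro/dilation) approximations $S_N$ are uniformly bounded on $A^p_\a$ (or can be replaced by $f\mapsto f_r$, which is uniformly bounded for all $p$), we have $\nm{K-KS}$ small for a suitable finite-rank-type $S$ built from dilations. Indeed, by compactness $\overline{K(\text{ball})}$ is compact, and on compact sets $h\mapsto h_r$ converges uniformly to the identity. Then
$$
K(f_{a_j})=(K(f_{a_j}))_r+\left(K(f_{a_j})-(K(f_{a_j}))_r\right).
$$
The second term is uniformly small in $j$ once $r$ is close to $1$. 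The first term tends to $0$ as $j\to\infty$, because $(K(f_{a_j}))_r$ is controlled by the values of $K(f_{a_j})$ on $|z|\le r$, and these tend to those of $h$. Thus $h_r=\lim_j (K(f_{a_j}))_r$ for every $r$, and what remains is to identify $h=0$. For this last point I would verify that the family $V_{\mu,g}(f_a)$ itself tends to $0$ locally uniformly, by Lemma~\ref{Lemma: Vmu to embed} together with the coefficient formula for $I^\mu$. I would then rely on the essential norm being attained, up to the constant $C(q)$, by compact operators of the form $f\mapsto V_{\mu,g}(f)_r$, which is the version I would ultimately write down.
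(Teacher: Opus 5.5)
\textbf{There is a genuine gap.} The step that carries the whole lemma is either false or only asserted.

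The fact you set out to prove is $\lim_{|a|\to1^-}\nm{K(f_a)}_{A^q_\a}=0$ for \emph{every} compact $K$. For $p>1$ your weak-nullity argument does give this, but for $p\le 1$ it is false. On $A^1=A^1_0$ take $Kf=\phi_g(f)\,e$, where $e\in A^q_0$ is fixed, $\phi_g(f)=\lim_{r\to1^-}\int_\D f(rz)\overline{g(z)}\,dA(z)$ and $g(z)=\log\frac{1}{1-z}\in\B$. The functions $f_a(z)=(1-|a|^2)(1-\overline{a}z)^{-3}$ are bounded in $A^1$ and tend to $0$ locally uniformly. Yet $\phi_g(f_a)=(1-|a|^2)\overline{\bigl(g(a)+\tfrac{a}{2}g'(a)\bigr)}$, which tends to $1$ as $a\to1^-$ along the radius.

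So after extracting $K(f_{a_j})\to h$ you cannot show $h=0$ in general; your dilation computation only recovers $h_r=\lim_j (K f_{a_j})_r$. The proposal then falls back on the assertion that $\nm{V_{\mu,g}}_{e}$ is attained, up to $C(q)$, by the operators $f\mapsto V_{\mu,g}(f)_r$. In your route that assertion is the entire content of the lemma, and it is not proved. The domain-side variant $f\mapsto V_{\mu,g}(f_r)$, which you also mention, would need $\nm{K-K\circ T_r}\to0$ for compact $K$, and that fails for the same rank-one $K$.

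The idea you are missing is that $h=0$ is never needed. First, $V_{\mu,g}(f_a)\to 0$ locally uniformly; the paper gets this from $I^\mu(f)(z)=\int_0^1 f(zr^2)\,r\mu(r)\,dr$, not from Lemma~\ref{Lemma: Vmu to embed}. Then $(K-V_{\mu,g})(f_{a_j})\to h$ locally uniformly. Fatou's lemma gives $\nm{h}_{A^q_\a}\le\liminf_j\nm{(V_{\mu,g}-K)(f_{a_j})}_{A^q_\a}\le\nm{V_{\mu,g}-K}_{A^p_\a\to A^q_\a}$. Hence $\limsup_j\nm{V_{\mu,g}(f_{a_j})}_{A^q_\a}\le C(q)\bigl(\nm{V_{\mu,g}-K}+\nm{h}\bigr)\le 2C(q)\nm{V_{\mu,g}-K}$. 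This is the paper's proof, and it works for every $p$ with no duality or approximation.

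Alternatively, your target-side dilation idea can be completed. With $R_rh=h_r$, write
$V_{\mu,g}-R_rV_{\mu,g}=(I-R_r)(V_{\mu,g}-K)+(I-R_r)K$.
\begin{itemize}
\item Since $\nm{R_r}\le 1$ on $A^q_\a$ and $R_r\to I$ uniformly on the relatively compact set $K(\mathrm{ball})$, this yields $\limsup_{r\to1^-}\nm{V_{\mu,g}-R_rV_{\mu,g}}\le C(q)\nm{V_{\mu,g}-K}$.
\item Also $\nm{V_{\mu,g}(f_a)}_{A^q_\a}\le C(q)\bigl(\nm{V_{\mu,g}-R_rV_{\mu,g}}+\sup_{|w|\le r}\abs{V_{\mu,g}(f_a)(w)}\bigr)$.
\item The supremum tends to $0$ as $|a|\to1^-$ by the local uniform convergence above.
\end{itemize}
Together these prove the lemma too. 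Either argument has to be written out; as it stands, the proposal stops where the actual proof begins.
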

\begin{proof}
    Let $\{f_a\}_{a\in\D}$ satisfying the hypothesis. We first show that $V_{\mu,g}(f_a)\to 0$ uniformly on compact subsets as $|a|\to 1^-$. For $f\in\H(\D)$ and $z\in\D$, we have
    $$
    I^\mu(f)(z) = \sum_{n=0}^\infty \mu_{2n+1}\widehat{f}(n)z^n =\sum_{n=0}^\infty \int_0^1\widehat{f}(n)(zr^2)^n r\mu(r) dr = \int_0^1 f(zr^2)r\mu(r)dr.
    $$
    Fix $0\leq R <1$. Applying this identity to $V_{\mu,g}(f_a)=I^\mu(f_a D^\mu(g))$, gives, for $z\in \overline{D(0,R)}$,
    $$
    \abs{V_{\mu,g}(f_a)(z)}\leq\int_0^1 \abs{f_a(zr^2)} \abs{D^\mu(g)(zr^2)}r\mu(r)dr \leq \mu_1\sup_{|\z|\leq R} \abs{f_a(\z)}\sup_{|\z|\leq R} \abs{D^\mu(g)(\z)}, 
    $$
    then the convergence of $f_a$ to zero uniformly on compact subsets implies the same convergence for $V_{\mu,g}(f_a)$.

    Now, let $K$ be a compact operator and $\{a_n\}_{n\in\N}$ a sequence such that $|a_n|\to 1^-$ and
    $$
    \lim_{n\to \infty} \nm{V_{\mu,g}(f_{a_n})}_{A^q_\a} =\limsup_{|a|\to 1^-} \nm{V_{\mu,g}(f_a)}_{A^q_\a}.
    $$
    By compactness there exists a subsequence, still denoted by $\{f_{a_n}\}_{n\in\N}$, such that $\{K(f_{a_n})\}_{n\in\N}\to h$ in $A^q_\a$ for some $h\in A^q_\a$. Thus $K(f_{a_n})-V_{\mu,g}(f_{a_n})\to h$ uniformly on compact subsets. Fatou's lemma yields
    \begin{equation*}
    \nm{h}_{A^q_\a} \leq \liminf_{n\to \infty}\nm{(V_{\mu,g}-K)(f_{a_n})}_{A^q_\a} 
    \leq \nm{V_{\mu,g}-K}_{A^p_\a\to A^q_\a}.
    \end{equation*}
    Lastly, for every $n\in \N$
    $$
    \nm{V_{\mu,g}(f_{a_n})}_{A^q_\a} \lesssim \nm{(V_{\mu,g}-K)(f_{a_n})}_{A^q_\a} + \nm{K(f_{a_n})}_{A^q_\a} \leq\nm{V_{\mu,g}-K}_{A^p_\a\to A^q_\a} + \nm{K(f_{a_n})}_{A^q_\a},
    $$
    taking limits gives
    $$
    \limsup_{|a|\to 1^-} \nm{V_{\mu,g}(f_a)}_{A^q_\a} \lesssim \nm{V_{\mu,g}-K}_{A^p_\a\to A^q_\a} + \nm{h}_{A^q_\a}\leq 2\nm{V_{\mu,g}-K}_{A^p_\a\to A^q_\a},
    $$
    taking the infimum over all compact operators $K:A^p_\a\to A^q_\a$ completes the proof.
\end{proof}

\begin{proposition}\label{propo: norm essential 1}
    Let $0<p\leq q<\infty$, $\mu\in \DDD$ and $g\in\H(\D)$. Then 
    \begin{itemize}
        \item[(i)] If $p<q$ and $V_{\mu,g}:H^p\to H^q$ is bounded, i.e., $g\in\Bb$ with $\b=\frac{1}{p}-\frac{1}{q}$ then 
        $$
        \nm{V_{\mu,g}}_{e,H^p\to H^q} \asymp \dist(g,\Bb_0)\asymp\limsup_{|z|\to 1^-}\abs{D^\mu(g)(z)}\mug(z)(1-|z|)^{-\b};
        $$
        \item[(ii)] If $p\leq q$, $\a>-1$ and $V_{\mu,g}:A^p_\a\to A^q_\a$ is bounded, i.e., $g\in\Bb$ with $\b=(\a+2)(\frac{1}{p}-\frac{1}{q})$ then 
        $$
        \nm{V_{\mu,g}}_{e,A^p_\a\to A^q_\a} \asymp \dist(g,\Bb_0)\asymp\limsup_{|z|\to 1^-}\abs{D^\mu(g)(z)}\mug(z)(1-|z|)^{-\b}.
        $$
    \end{itemize}
\end{proposition}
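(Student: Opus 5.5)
The second equivalence $\dist(g,\Bb_0)\asymp\limsup_{|z|\to1^-}|D^\mu(g)(z)|\mug(z)(1-|z|)^{-\b}$ is exactly Lemma~\ref{lemma: distancia Bb}. It therefore suffices to show that $\nm{V_{\mu,g}}_{e}$ is comparable to the limsup quantity, which we denote by $L(g)$. Note that by Proposition~\ref{propo: caract Vmu p<q} (and, for $p=q$ in the Bergman case, by Lemma~\ref{Lemma: Vmu to embed} with Theorem~\ref{thletter: emebedding bergman} and Lemma~\ref{lemma: esp crecimiento}), boundedness is equivalent to $g\in\Bb$ and compactness to $g\in\Bb_0$, with $\nm{V_{\mu,g}}\asymp\nm{g}_{\Bb}$.

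\emph{Upper bound.} The plan is to use the linearity $g\mapsto V_{\mu,g}$. For any $h\in\Bb_0$ the operator $V_{\mu,h}$ is compact, so
$$
\nm{V_{\mu,g}}_{e}\le \nm{V_{\mu,g}-V_{\mu,h}}=\nm{V_{\mu,g-h}}\lesssim \nm{g-h}_{\Bb}.
$$
Taking the infimum over $h$ gives $\nm{V_{\mu,g}}_e\lesssim\dist(g,\Bb_0)\asymp L(g)$. In the degenerate cases of Lemma~\ref{lemma: distancia Bb}, where $\Bb_0=\{0\}$, this bound is simply the norm bound, which is consistent.

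\emph{Lower bound.} We apply Lemma~\ref{lemma: compact trick} to a normalized family of kernel-type test functions, namely
$$
f_a(z)=\frac{(1-|a|^2)^{\gamma-(\a+2)/p}}{(1-\overline{a}z)^{\gamma}},\qquad \gamma>\frac{\a+2}{p},
$$
with $\a=-1$ in the Hardy case. These satisfy $\sup_a\nm{f_a}_{A^p_\a}\lesssim1$ and $f_a\to0$ uniformly on compacta as $|a|\to1^-$. By \cite[Theorem 1]{PelRosa} in the Bergman case, and by \cite[Theorem 1.1]{BelMorNikPel} with the tent-space expression in the Hardy case, we bound $\nm{V_{\mu,g}(f_a)}_{A^q_\a}$ from below by restricting the integral to a pseudohyperbolic disc $\Delta(a,r)$. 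On that disc $|f_a|\asymp(1-|a|)^{-(\a+2)/p}$, and both $\mug$ and $1-|z|$ are essentially constant, using $\mu\in\DD$. Subharmonicity of $|D^\mu(g)|^q$ then gives
$$
\nm{V_{\mu,g}(f_a)}_{A^q_\a}^q\gtrsim (1-|a|)^{-(\a+2)q/p}\,|D^\mu(g)(a)|^q\mug(a)^q(1-|a|)^{\a+2},
$$
that is, $\nm{V_{\mu,g}(f_a)}\gtrsim |D^\mu(g)(a)|\mug(a)(1-|a|)^{-\b}$.

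In the Hardy case the analogous step is more delicate, and this is the step I expect to be the main obstacle. I would lower-bound the $H^q$ quasinorm using only boundary points $\z$ in an arc $I_a$ of length about $1-|a|$ centred at $a/|a|$. For these $\z$ the cone $\Gamma(\z)$ contains a fixed hyperbolic disc around $a$. The area integral over that disc is then at least $(1-|a|)^{-2/p}|D^\mu(g)(a)|^2\mug(a)^2$, again by subharmonicity. Raising to the power $q/2$ and integrating over $I_a$ gives
$$
\nm{V_{\mu,g}(f_a)}_{H^q}\gtrsim |D^\mu(g)(a)|\mug(a)(1-|a|)^{1/q-1/p}.
$$

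Finally, choose $a_n$ with $|a_n|\to1$ realizing $L(g)$. Lemma~\ref{lemma: compact trick} then yields $L(g)\lesssim\limsup_n\nm{V_{\mu,g}(f_{a_n})}\lesssim\nm{V_{\mu,g}}_e$, which is the desired lower bound.
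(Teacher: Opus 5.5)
Your proposal is correct and follows the paper's proof. You use Lemma~\ref{lemma: distancia Bb} for the distance formula, and linearity in the symbol together with compactness of $V_{\mu,h}$ for $h\in\Bb_0$ for the upper bound. For the lower bound you feed kernel-type test functions into Lemma~\ref{lemma: compact trick} via the pointwise estimate $\abs{D^\mu(g)(a)}\mug(a)(1-|a|)^{-\b}\lesssim\nm{V_{\mu,g}(f_a)}_{A^q_\a}$.

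The differences are only in the details. You obtain that pointwise estimate directly by subharmonicity on $\Delta(a,r)$; in the Hardy case you restrict to an arc of length comparable to $1-|a|$ whose cones contain $\Delta(a,r)$. The paper instead uses the $M_\infty$--$M_q$ inequality in the Bergman case, and in the Hardy case applies \cite[Lemma 3.1]{AguiMasPelRat} after checking that $\mug(z)^2/(1-|z|)\in\DDD$. Also, for $p=q$ you apply Lemma~\ref{lemma: esp crecimiento} with $\b=0$, which is outside its stated range, though its proof works verbatim; the paper avoids this by citing \cite[Theorem 1.4]{BelMorNikPel}.
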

\begin{proof}
    We prove both statements together. Write $H^p=A^p_{-1}$ and $\b=(\a+2)(\frac{1}{p}-\frac{1}{q})$ for $\a\geq -1$. If $\a=-1$, we consider only $p<q$, as in $(i)$; for $\a>-1$, we allow $p\leq q$, as in $(ii)$.

    Let $g\in \Bb$. In both cases, the second equivalence follows from Lemma~\ref{lemma: distancia Bb}. It remains to prove
    \begin{equation}\label{eq: equiv distancia 1}
    \limsup_{|z|\to 1^-}\abs{D^\mu(g)(z)}\mug(z)(1-|z|)^{-\b} \lesssim \nm{V_{\mu,g}}_{e,A^p_\a\to A^q_\a} \lesssim \dist(g,\Bb_0).
    \end{equation}

    The second inequality follows from Proposition~\ref{propo: caract Vmu p<q} when $p<q$, and from \cite[Theorem 1.4]{BelMorNikPel} when $p=q$ and $\a>-1$. Indeed, $V_{\mu,h}$ is compact for every $h\in \Bb_0$. The linearity in the symbol and the corresponding norm estimate give
    $$
    \nm{V_{\mu,g}}_{e,A^p_\a \to A^q_\a}\leq \nm{V_{\mu,g}-V_{\mu,h}}_{A^p_\a \to A^q_\a} = \nm{V_{\mu,g-h}}_{A^p_\a \to A^q_\a} \asymp \nm{g-h}_{\Bb},
    $$
    and taking the infimum over $h\in \Bb_0$ proves the inequality. To prove the remaining inequality in \eqref{eq: equiv distancia 1}, consider the test functions,
    $$
    f_{a,\a,p}(z)=\frac{(1-|a|)^{\frac{\a+2}{p}}}{(1-\overline{a}z)^{\frac{2(\a+2)}{p}}},\quad a\in\D.
    $$
    These satisfy $\sup_{a\in\D}\nm{f_{a,\a,p}}_{A^p_\a}\lesssim 1$, and converge to zero uniformly on compact subsets as $|a|\to 1^-$. With $\a$ and $p$ fixed, simply write $f_a=f_{a,\a,p}$. Applying Lemma~\ref{lemma: compact trick} to this family of functions, after a normalization independent of $a$, gives
    $$
    \limsup_{|a|\to 1^-} \nm{V_{\mu,g}(f_{a})}_{A^q_\a} \leq C \nm{V_{\mu,g}}_{e,A^p_\a\to A^q_\a}.
    $$
    It remains to show that
    \begin{equation*}
    \limsup_{|a|\to 1^-}\abs{D^\mu(g)(a)}\mug(a)(1-|a|)^{-\b} \lesssim \limsup_{|a|\to 1^-} \nm{V_{\mu,g}(f_{a})}_{A^q_\a}.
    \end{equation*}
    In fact, we prove the pointwise estimate
    \begin{equation}\label{eq: pointwise}
        \abs{D^\mu(g)(a)}\mug(a)(1-|a|)^{-\b} \lesssim \nm{V_{\mu,g}(f_{a})}_{A^q_\a},
    \end{equation}
    for every $a\in\D$, treating $\a>-1$ and $\a=-1$ separately.
    
    Suppose first that $\a>-1$. First recall the well-known inequality
    \begin{equation}\label{eq: HL ineq}
    M_\infty(r,f)\lesssim \frac{M_q\left (\frac{1+r}{2},f \right )}{(1-r)^{\frac{1}{q}}},\quad f\in\H(\D),
    \end{equation}
    applying \eqref{eq: HL ineq}, \cite[Theorem 1]{PelRosa} and the fact that $\mu\in\DDD$, we obtain
    \begin{equation*}
        \begin{split}
             \abs{D^\mu(g)(a)}^q\mug(a)^q(1-|a|)^{-q\b} &\asymp \abs{D^\mu(g)(a)f_{a}(a)}^q\mug(a)^q(1-|a|)^{\a+2} \\
             &\lesssim M_q^q\left (\frac{1+|a|}{2},D^\mu(g)f_a \right )\mug(a)^q(1-|a|)^{\a+1}  \\
            &\lesssim \int_{\frac{1+|a|}{2}}^1 M_q^q(r,D^\mu(g)f_a)\mug(r)^q(1-r)^{\a} dr\\
            &\lesssim \int_\D \abs{D^\mu(V_{\mu,g}(f_{a}))(z)}^q\mug(z)^q(1-|z|)^{\a} dA(z)\\
            &\asymp \nm{V_{\mu,g}(f_{a})}_{A^q_\a}^q.
        \end{split}
    \end{equation*}
    
    On the other hand, if $\a=-1$ \cite[Theorem 1.1]{BelMorNikPel} gives
    $$
    \nm{V_{\mu,g}(f_{a})}_{H^q} \asymp \nm{D^\mu(g)f_a}_{AT^q_2(\frac{\mug^2}{1-|\cdot|})}
    $$ where $AT^q_2(\om)$ is the weighted analytic tent space induced by  $\om$ studied in \cite{AguiMasPelRat}. Lemma~\ref{lemma: caract dcheck} (ii) shows that $\om(z)=\frac{\mug(z)^2}{1-|z|}$ is a radial weight. Moreover, 
    $$
    \omg(r)=\int_r^1 \frac{\mug(s)^2}{1-s}ds \lesssim \frac{\mug(r)^2}{(1-r)^{2\eta}} \int_r^1 (1-s)^{2\eta-1}ds \asymp \mug(r)^2,
    $$
    and the reverse inequality follows from $\mu\in\DD$:
    $$
    \omg(r)=\int_r^1 \frac{\mug(s)^2}{1-s}ds \geq \int_r^{\frac{1+r}{2}} \frac{\mug(s)^2}{1-s}ds \geq \mug\left (\frac{1+r}{2}\right )^2 \log(2) \gtrsim \mug(r)^2.
    $$
    Thus $\om\in\DDD\subset \DD$, then an application of \cite[Lemma 3.1]{AguiMasPelRat} shows that for every $a\in\D$,
    $$
        \nm{V_{\mu,g}(f_{a})}_{H^q} \asymp \nm{D^\mu(g)f_a}_{AT^q_2(\om)} \gtrsim \abs{D^\mu(g)(a)f_a(a)}\omg(a)^{\frac{1}{2}}(1-|a|)^{\frac{1}{q}} \asymp \abs{D^\mu(g)(a)}\mug(a)(1-|a|)^{-\b}.
    $$
    Concluding the proof.
\end{proof}
Now we can prove Theorem~\ref{th: caract Vmu p<q}.

\begin{Prf}{\em{  Theorem~\ref{th: caract Vmu p<q}}.}
$(i)$ follows from Proposition~\ref{propo: caract Vmu p<q} (i), while $(ii)$ follows from Proposition~\ref{propo: caract Vmu p<q} $(ii)$ and Proposition~\ref{propo: norm essential 1}.

With $(i)$ and $(ii)$ established, $(iii)$ follows from Theorem~\ref{th: caract Bmu}. Finally, $(ii)$, Theorem~\ref{th: caract Bmu} and Lemma~\ref{lemma: distancia Bbn} $(i)$ give
\begin{equation*}
\begin{split}
\nm{V_{\mu,g}}_{e,A^p_\a\to A^q_\a} &\asymp \dist(g,\Bb_0) = \inf_{h\in\Bb_0}\nm{g-h}_{\Bb} \asymp \inf_{h\in\B^\b_0}\nm{g-h}_{\B^\b} \\
&= \dist(g,\B^\b_0) \asymp \limsup_{|z|\to 1^-}\abs{g^{(n_\b)}(z)}(1-|z|)^{n_\b-\b}. 
\end{split}
\end{equation*}
which completes the proof of (iv).
\end{Prf}

Now it remains to deal with the essential norm of $V_{\mu,g}$ acting on $H^p$.

\begin{proposition}\label{propo: norm essential 2}
    Let $0<p<\infty$, $\mu\in \DDD$ and $g\in \BMOA_\mu$. Then
    $$
    \nm{V_{\mu,g}}_{e,H^p\to H^p} \asymp \dist(g,\VMOA_\mu) \asymp \limsup_{|a|\to 1^-}\left ( \frac{\int_{S(a)}\abs{D^\mu(g)(z)}^2\frac{\mug(z)^2}{1-|z|}dA(z)}{1-|a|}\right )^{\frac{1}{2}}.
    $$
\end{proposition}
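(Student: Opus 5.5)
The plan mirrors the proof of Proposition~\ref{propo: norm essential 1}, with $\BMOA_\mu$ and $\VMOA_\mu$ in place of $\Bb$ and $\Bb_0$. The second equivalence is exactly Lemma~\ref{lemma: distancia BMOAmu}. It therefore suffices to prove
$$
\limsup_{|a|\to 1^-}\left ( \frac{\int_{S(a)}\abs{D^\mu(g)(z)}^2\frac{\mug(z)^2}{1-|z|}dA(z)}{1-|a|}\right )^{\frac{1}{2}} \lesssim \nm{V_{\mu,g}}_{e,H^p\to H^p} \lesssim \dist(g,\VMOA_\mu).
$$

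\emph{Upper bound.} Lemma~\ref{Lemma: Vmu to embed} (i) reduces the operator norm to the embedding norm $\nm{Id}_{H^p\to T^p_2(\mu_g)}$. Theorem~\ref{thletter: emebedding hardy tent} (a) with $p=q$ then gives $\nm{V_{\mu,g}}_{H^p\to H^p}\asymp\nm{g}_{\BMOA_\mu}$ (up to the constant term). Likewise, part (b) of that theorem shows that $V_{\mu,h}$ is compact for every $h\in\VMOA_\mu$. Since the operator is linear in the symbol,
$$
\nm{V_{\mu,g}}_{e,H^p\to H^p}\le\nm{V_{\mu,g-h}}_{H^p\to H^p}\lesssim\nm{g-h}_{\BMOA_\mu},
$$
and taking the infimum over $h\in\VMOA_\mu$ gives the bound.

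\emph{Lower bound.} Lemma~\ref{lemma: compact trick} applies with $\a=-1$ and $p=q$. I will use the normalized test functions
$$
f_a(z)=\frac{(1-|a|)^{1/p}}{(1-\overline a z)^{2/p}}.
$$
These are uniformly bounded in $H^p$ and tend to $0$ uniformly on compact sets, so $\limsup_{|a|\to1^-}\nm{V_{\mu,g}(f_a)}_{H^p}\lesssim\nm{V_{\mu,g}}_{e,H^p\to H^p}$. It remains to show, for $|a|$ close to $1$,
$$
\frac{\int_{S(a)}\abs{D^\mu(g)}^2\frac{\mug^2}{1-|z|}dA}{1-|a|}\lesssim \nm{V_{\mu,g}(f_a)}_{H^p}^2 \asymp \int_{\T}\Big(\int_{\Gamma(\z)}|f_a|^2|D^\mu(g)|^2\frac{\mug^2}{(1-|z|)^2}dA\Big)^{p/2}|d\z|,
$$
where the last equivalence is \cite[Theorem 1.1]{BelMorNikPel}. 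On $S(a)$ we have $|f_a(z)|\asymp(1-|a|)^{-1/p}$. Let $I_a\subset\T$ be the arc below $S(a)$. Restricting the outer integral to $\z\in I_a$ and the inner one to $\Gamma(\z)\cap S(a)$, the right-hand side is at least
$$
(1-|a|)^{-1}\int_{I_a}\Big(\int_{\Gamma(\z)\cap S(a)}|D^\mu(g)|^2\frac{\mug^2}{(1-|z|)^2}dA\Big)^{p/2}|d\z|.
$$

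This is the main obstacle: comparing this quantity with $(1-|a|)^{-1}\int_{S(a)}\abs{D^\mu(g)}^2\frac{\mug^2}{1-|z|}\,dA$ to the power $p/2$.
\begin{itemize}
\item For $p\ge2$, Jensen/H\"older on $I_a$ (which has normalized measure $\asymp 1-|a|$) together with Fubini, using $|\{\z\in I_a:z\in\Gamma(\z)\}|\asymp 1-|z|$, gives the estimate directly.
\item For $p<2$ the inequality goes the wrong way. I would first try a stronger test family $f_a^{2/p}$-type, i.e.\ multiplying $f_a$ by a power of $(1-|a|)/(1-\overline az)$. This makes the tent integral concentrate, so that a pointwise-in-$\z$ bound combined with the Garnett--Jones/John--Nirenberg-type self-improvement of $\BMOA_\mu$ Carleson windows yields the comparison.
\item Alternatively, for $p<2$ I would reduce to $p=2$ by factorization. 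Write the test function as $f_a^{(2)}=F\cdot G$ with $F\in H^p$ and $G\in H^{s}$ bounded; Theorem~\ref{th: caract Vmu q<p}-style H\"older arguments should then transfer the estimate.
\end{itemize}
In the case $p=2$ everything is explicit: by Fubini, $\nm{V_{\mu,g}(f_a)}_{H^2}^2\asymp\int_\D|f_a|^2|D^\mu(g)|^2\frac{\mug^2}{1-|z|}dA$, which dominates the Carleson-box quotient.
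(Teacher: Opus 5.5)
\textbf{Gap: the case $p<2$ is not proved.} Your upper bound is fine, and so are your cases $p=2$ and $p>2$. For $p>2$ your Jensen--Fubini argument on $I_a$ is in fact more direct than the paper's, which interpolates between $H^p$ and $H^1$ and then uses the absorption argument described below. (Minor slip: the tent integral is comparable to $\nm{V_{\mu,g}(f_a)}_{H^p}^p$, not to its square.)

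For $p<2$ you flag the difficulty, but neither sketch closes it. Write
\[
Q_g(a)=\frac{1}{1-|a|}\int_{S(a)}\abs{D^\mu(g)(z)}^2\frac{\mug(z)^2}{1-|z|}\,dA(z).
\]
Suppose the mass of $\mu_g$ in $S(a)$ sits inside a much smaller box $S(b)$. Since $\abs{f_a}\asymp(1-|a|)^{-1/p}$ on all of $S(a)$, one finds that $\nm{V_{\mu,g}(f_a)}_{H^p}^p$ is of order $\bigl(\frac{1-|b|}{1-|a|}\bigr)^{1-p/2}Q_g(a)^{p/2}$. So no estimate built on the single window at $a$ can give $Q_g(a)\lesssim\nm{V_{\mu,g}(f_a)}_{H^p}^2$, and a more concentrated test function at $a$ changes nothing.

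John--Nirenberg/Garnett--Jones self-improvement compensates by passing to the window at $b$. However, it is a statement about suprema over all windows; using it for a $\limsup$ would need a localized stopping-time version, which you have not supplied. The factorization route has a similar defect. Any H\"older transfer between the $H^2$ and $H^p$ quantities must control the complementary factor by something global, such as $\nm{g}_{\BMOA_\mu}$. An inequality with such a global factor says nothing about the essential norm on its own.

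\textbf{The missing idea: interpolation plus absorption.} By H\"older on $\T$, $\nm{F}_{H^2}^2\le\nm{F}_{H^p}^{p/2}\nm{F}_{H^{4-p}}^{(4-p)/2}$. Apply this to $F=V_{\mu,g}(f_a)$, and then:
\begin{itemize}
\item bound the $H^{4-p}$ factor by the operator norm, $\nm{F}_{H^{4-p}}\lesssim\nm{g}_{\BMOA_\mu}\nm{f_a}_{H^{4-p}}$;
\item bound $\nm{F}_{H^2}^2$ from below by your explicit $p=2$ formula.
\end{itemize}
Your $f_a$ suffice, since $4-p>p/2$ gives $\nm{f_a}_{H^{4-p}}\asymp(1-|a|)^{\frac{1}{4-p}-\frac1p}$. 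This yields
\[
Q_g(a)\lesssim\nm{V_{\mu,g}(f_a)}_{H^p}^{p/2}\,\nm{g}_{\BMOA_\mu}^{(4-p)/2},\quad a\in\D .
\]
Now apply this to $g-h$ with $h\in\VMOA_\mu$. First, $\limsup_{|a|\to1^-}Q_{g-h}(a)=\limsup_{|a|\to1^-}Q_g(a)$. Second, $\limsup_{|a|\to1^-}\nm{V_{\mu,g-h}(f_a)}_{H^p}=\limsup_{|a|\to1^-}\nm{V_{\mu,g}(f_a)}_{H^p}$, because $V_{\mu,h}$ is compact and Lemma~\ref{lemma: compact trick} gives $\limsup_{|a|\to1^-}\nm{V_{\mu,h}(f_a)}_{H^p}=0$.

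Taking the infimum over $h$ replaces $\nm{g-h}_{\BMOA_\mu}$ by $\dist(g,\VMOA_\mu)$. By Lemma~\ref{lemma: distancia BMOAmu}, this distance is comparable to $\bigl(\limsup_{|a|\to1^-}Q_g(a)\bigr)^{1/2}$. Dividing (the case $\limsup Q_g=0$ being trivial) gives $\bigl(\limsup_{|a|\to1^-}Q_g(a)\bigr)^{1/2}\lesssim\limsup_{|a|\to1^-}\nm{V_{\mu,g}(f_a)}_{H^p}$. This is exactly the lower bound you need.
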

\begin{proof}
    Let $g\in \BMOA_\mu$, the second equivalence follows from Lemma~\ref{lemma: distancia BMOAmu}. It remains to prove
    \begin{equation}\label{eq: equiv distancia 2}
    \limsup_{|a|\to 1^-}\left ( \frac{\int_{S(a)}\abs{D^\mu(g)(z)}^2\frac{\mug(z)^2}{1-|z|}dA(z)}{1-|a|}\right )^{\frac{1}{2}} \lesssim \nm{V_{\mu,g}}_{e,H^p\to H^p} \lesssim \dist(g,\VMOA_\mu).
    \end{equation}
    The second inequality as in the proof of Proposition~\ref{propo: norm essential 1}, using \cite[Theorem 1.3]{BelMorNikPel}. We therefore prove the first inequality.
    Consider the test functions,
    $$
    f_{a,p}(z)=\frac{(1-|a|)^{2}}{(1-\overline{a}z)^{2+\frac{1}{p}}},\quad a\in\D.
    $$
    They satisfy $\nm{f_{a,p}}_{H^q} \asymp (1-|a|)^{\frac{1}{q}-\frac{1}{p}}$ for every $q>\frac{p}{2p+1}$. In particular, $\sup_{a\in\D} \nm{f_{a,p}}_{H^p}\lesssim 1$ and $f_{a,p}$ converges to zero uniformly on compact subsets as $|a|\to 1^-$. Applying Lemma~\ref{lemma: compact trick}, after a normalization independent of $a$, gives
    \begin{equation}\label{eq: hardy ess norm por limsup}
    \limsup_{|a|\to 1^-} \nm{V_{\mu,g}(f_{a,p})}_{H^p} \leq C \nm{V_{\mu,g}}_{e,H^p\to H^p}.
    \end{equation}
    Then it remains to check that 
    \begin{equation}\label{eq: final limsup}
         \limsup_{|a|\to 1^-}\left ( \frac{\int_{S(a)}\abs{D^\mu(g)(z)}^2\frac{\mug(z)^2}{1-|z|}dA(z)}{1-|a|}\right )^{\frac{1}{2}} \lesssim \limsup_{|a|\to 1^-} \nm{V_{\mu,g}(f_{a,p})}_{H^p}.
    \end{equation}

    To prove \eqref{eq: final limsup}, we treat $p<2$, $p=2$ and $p>2$ separately.

    If $p=2$, \cite[Corollary 3.4]{BelMorNikPel} gives a suitable expression for the $H^2$ norm in terms of $D^\mu(f)$. Thus, for every $a\in\D$,
    \begin{equation*}
        \begin{split}
    \nm{V_{\mu,g}(f_{a,2})}_{H^2}^2 &\asymp \int_\D \abs{f_{a,2}(z)}^2 \abs{D^\mu(g)(z)}^2 \frac{\mug(z)^2}{1-|z|}dA(z) \\
    &\gtrsim \frac{\int_{S(a)}\abs{D^\mu(g)(z)}^2 \frac{\mug(z)^2}{1-|z|}dA(z)}{1-|a|},
        \end{split}
    \end{equation*}
and \eqref{eq: final limsup} follows by taking the superior limit.

For $p>2$, Hölder's inequality and \cite[Theorem 1.3]{BelMorNikPel} yield
\begin{equation}\label{eq: caso p>2 1}
    \begin{split}
        \nm{V_{\mu,g}(f_{a,p})}_{H^2}^2 &\leq \nm{V_{\mu,g}(f_{a,p})}_{H^p}^{\frac{p}{p-1}}\nm{V_{\mu,g}(f_{a,p})}_{H^1}^{\frac{p-2}{p-1}}\\
        &\lesssim \nm{V_{\mu,g}(f_{a,p})}_{H^p}^{\frac{p}{p-1}} \nm{g}_{\BMOA_\mu}^{\frac{p-2}{p-1}}\nm{f_{a,p}}_{H^1}^{\frac{p-2}{p-1}} \\
        &\asymp \nm{V_{\mu,g}(f_{a,p})}_{H^p}^{\frac{p}{p-1}} \nm{g}_{\BMOA_\mu}^{\frac{p-2}{p-1}}(1-|a|)^{1-\frac{2}{p}}.
    \end{split}
\end{equation}
On the other hand, arguing as in the case $p=2$, we obtain, for every $a\in \D$
$$
\nm{V_{\mu,g}(f_{a,p})}_{H^2}^2 \gtrsim \frac{\int_{S(a)}\abs{D^\mu(g)(z)}^2 \frac{\mug(z)^2}{1-|z|}dA(z)}{(1-|a|)^{\frac{2}{p}}},
$$
combining this with \eqref{eq: caso p>2 1}, for every $a\in \D$,
$$
\frac{\int_{S(a)}\abs{D^\mu(g)(z)}^2 \frac{\mug(z)^2}{1-|z|}dA(z)}{1-|a|}  \lesssim \nm{V_{\mu,g}(f_{a,p})}_{H^p}^{\frac{p}{p-1}} \nm{g}_{\BMOA_\mu}^{\frac{p-2}{p-1}}.
$$
This inequality holds for every symbol in $\BMOA_\mu$. In particular, for every $h\in \VMOA_\mu$ and $a\in\D$,
\begin{equation}\label{eq: caso p>2 2}
    \frac{\int_{S(a)}\abs{D^\mu(g-h)(z)}^2 \frac{\mug(z)^2}{1-|z|}dA(z)}{1-|a|} \lesssim \nm{V_{\mu,g-h}(f_{a,p})}_{H^p}^{\frac{p}{p-1}} \nm{g-h}_{\BMOA_\mu}^{\frac{p-2}{p-1}}.
\end{equation}
Before taking the limit superior as $|a|\to 1^-$ in \eqref{eq: caso p>2 2}, we note two facts.
First, the triangle inequality and $h\in \VMOA_\mu$ give
$$
\limsup_{|a|\to 1^-}\frac{\int_{S(a)}\abs{D^\mu(g-h)(z)}^2 \frac{\mug(z)^2}{1-|z|}dA(z)}{1-|a|}  = \limsup_{|a|\to 1^-}\frac{\int_{S(a)}\abs{D^\mu(g)(z)}^2 \frac{\mug(z)^2}{1-|z|}dA(z)}{1-|a|}.
$$
Second, \eqref{eq: hardy ess norm por limsup} and the compactness of $V_{\mu,h}$ imply
$$
0\leq \limsup_{|a|\to 1^-} \nm{V_{\mu,h}(f_{a,p})}_{H^p} \lesssim \nm{V_{\mu,h}}_{e,H^p\to H^p}=0.
$$
Thus, by the triangle inequality,
$$
\limsup_{|a|\to 1^-} \nm{V_{\mu,g-h}(f_{a,p})}_{H^p} = \limsup_{|a|\to 1^-} \nm{V_{\mu,g}(f_{a,p})}_{H^p}.
$$
Taking the limit superior as $|a|\to 1^-$ in \eqref{eq: caso p>2 2} gives, for every $h\in\VMOA_\mu$
$$
\limsup_{|a|\to 1^-}\frac{\int_{S(a)}\abs{D^\mu(g)(z)}^2 \frac{\mug(z)^2}{1-|z|}dA(z)}{1-|a|}  \lesssim \limsup_{|a|\to 1^-}\nm{V_{\mu,g}(f_{a,p})}_{H^p}^{\frac{p}{p-1}} \nm{g-h}_{\BMOA_\mu}^{\frac{p-2}{p-1}}.
$$
Taking the infimum over $h\in \VMOA_\mu$ gives
$$
\limsup_{|a|\to 1^-}\frac{ \int_{S(a)}\abs{D^\mu(g)(z)}^2 \frac{\mug(z)^2}{1-|z|}dA(z)}{1-|a|} \lesssim \limsup_{|a|\to 1^-}\nm{V_{\mu,g}(f_{a,p})}_{H^p}^{\frac{p}{p-1}} \dist(g,\VMOA_\mu)^{\frac{p-2}{p-1}}.
$$
Without loss of generality, we may assume that $g\in\BMOA_\mu\setminus \VMOA_\mu$, otherwise the result is trivial. Then we can divide by the distance and apply Lemma~\ref{lemma: distancia BMOAmu} to obtain that
$$
\left (\limsup_{|a|\to 1^-}\frac{\int_{S(a)}\abs{D^\mu(g)(z)}^2 \frac{\mug(z)^2}{1-|z|}dA(z)}{1-|a|}\right )^{\frac{p}{2(p-1)}} \lesssim \limsup_{|a|\to 1^-}\nm{V_{\mu,g}(f_{a,p})}_{H^p}^{\frac{p}{p-1}},
$$
or equivalently
$$
\limsup_{|a|\to 1^-}\left (\frac{\int_{S(a)}\abs{D^\mu(g)(z)}^2 \frac{\mug(z)^2}{1-|z|}dA(z)}{1-|a|} \right )^{\frac{1}{2}} \lesssim \limsup_{|a|\to 1^-}\nm{V_{\mu,g}(f_{a,p})}_{H^p},
$$
this proves \eqref{eq: final limsup} for $p>2$.

It remains to consider $p<2$. Write $2=\frac{p}{2} + \frac{4-p}{2}$, Hölder's inequality with exponent $2$ and \cite[Theorem 1.3]{BelMorNikPel} give
\begin{equation}\label{eq: caso p<2 1}
    \begin{split}
        \nm{V_{\mu,g}(f_{a,p})}_{H^2}^2 &\leq \nm{V_{\mu,g}(f_{a,p})}_{H^p}^{\frac{p}{2}}\nm{V_{\mu,g}(f_{a,p})}_{H^{4-p}}^{\frac{4-p}{2}}\\
        &\lesssim \nm{V_{\mu,g}(f_{a,p})}_{H^p}^{\frac{p}{2}} \nm{g}_{\BMOA_\mu}^{\frac{4-p}{2}}\nm{f_{a,p}}_{H^{4-p}}^{\frac{4-p}{2}} \\
        &\asymp \nm{V_{\mu,g}(f_{a,p})}_{H^p}^{\frac{p}{2}}\nm{g}_{\BMOA_\mu}^{\frac{4-p}{2}}(1-|a|)^{1-\frac{2}{p}},
    \end{split}
\end{equation}
Arguing as in the case $p>2$, we obtain, for every $a\in \D$,
$$
\frac{\int_{S(a)}\abs{D^\mu(g)(z)}^2 \frac{\mug(z)^2}{1-|z|}dA(z)}{1-|a|}  \lesssim \nm{V_{\mu,g}(f_{a,p})}_{H^p}^{\frac{p}{2}}\nm{g}_{\BMOA_\mu}^{\frac{4-p}{2}},
$$
and again applying this estimate to $g-h$ for every $h\in \VMOA_\mu$ and taking the infimum
$$
\limsup_{|a|\to 1^-}\frac{ \int_{S(a)}\abs{D^\mu(g)(z)}^2 \frac{\mug(z)^2}{1-|z|}dA(z)}{1-|a|} \lesssim \limsup_{|a|\to 1^-}\nm{V_{\mu,g}(f_{a,p})}_{H^p}^{\frac{p}{2}} \dist(g,\VMOA_\mu)^{\frac{4-p}{2}},
$$
Now, we may assume that $g\in \BMOA_\mu\setminus\VMOA_\mu$. Dividing by the appropriate power of the distance and applying Lemma~\ref{lemma: distancia BMOAmu} gives
$$
\left (\limsup_{|a|\to 1^-}\frac{\int_{S(a)}\abs{D^\mu(g)(z)}^2 \frac{\mug(z)^2}{1-|z|}dA(z)}{1-|a|}\right )^{\frac{p}{4}} \lesssim \limsup_{|a|\to 1^-}\nm{V_{\mu,g}(f_{a,p})}_{H^p}^{\frac{p}{2}},
$$
so \eqref{eq: final limsup} holds for $p<2$, concluding the proof.
\end{proof}

We conclude the section by proving Theorem~\ref{th: norma esencial p=q}.

\begin{Prf}{\em{  Theorem~\ref{th: norma esencial p=q}}.}
Statement $(i)$ follows by combining Proposition~\ref{propo: norm essential 2}, \cite[Theorem 1.2]{BelMorNikPel} and Lemma~\ref{lemma: distancia Bbn} (ii):
\begin{equation*}
    \begin{split}
        \nm{V_{\mu,g}}_{e,H^p\to H^p} &\asymp \dist(g,\VMOA_\mu) = \inf_{h\in\VMOA_\mu}\nm{g-h}_{\BMOA_\mu} \asymp \inf_{h\in\VMOA}\nm{g-h}_{\BMOA} \\ &=  \dist(g,\VMOA) \asymp  \limsup_{|a|\to 1^-}\left ( \frac{\int_{S(a)}\abs{g'(z)}^2 (1-|z|)dA(z)}{1-|a|}\right )^{\frac{1}{2}}.
    \end{split}
\end{equation*}
Similarly, Proposition~\ref{propo: norm essential 1} $(ii)$ with $p=q$, Theorem~\ref{th: caract Bmu} with $\b=0$, and Lemma~\ref{lemma: distancia Bbn} (i) give
\begin{equation*}
\begin{split}
\nm{V_{\mu,g}}_{e,A^p_\a\to A^p_\a} &\asymp \dist(g,\B^{0,\mu}_0) = \inf_{h\in\B^{0,\mu}_0}\nm{g-h}_{\B^{0,\mu}} \asymp \inf_{h\in\B_0}\nm{g-h}_{\B} \\
&= \dist(g,\B_0) \asymp \limsup_{|z|\to 1^-}\abs{g'(z)}(1-|z|),
\end{split}
\end{equation*}
which proves $(ii)$.
\end{Prf}
\bibliographystyle{amsplain}
\bibliography{Bibliografia}  
\end{document}